\def\subsection{\@startsection{subsection}{2}%
  \z@{.5\linespacing\@plus.7\linespacing}
{.5\baselineskip}%
  {\normalfont\centering\scshape}%
}
\title{Phase transition for Loewner evolutions with complex linear drivers}
\author{Luis Brummet}
\email{Luis.Brummet@aalto.fi}
\address{Aalto University, Otakaari 1, 02150 Espoo, Finland.}
\begin{document}

\begin{abstract}
We study deterministic Loewner evolutions on the complex plane driven by complex-valued functions. This model can be viewed as a generalization of real-driven Loewner evolutions in the upper half-plane, or as the deterministic analogue of complex-driven Schramm-Loewner evolutions. First, we contribute to the already known theory of such evolutions. We establish a sufficient condition for drivers in the $C^1$-class to create a two-sided simple curve. By constructing a counterexample in the $C^0$-class, we demonstrate that the same condition is not necessary and discuss an alternative necessary and sufficient condition for $C^0$-drivers that create two-sided curves.

Second, we analyze the evolutions driven by the one-parameter family of complex linear drivers $\{ct\}_{c \in \mathbb C}$. We show that the geometries of the generated hulls differ significantly from the chordal real-driven case. Although each complex linear driver creates a two-sided curve, the geometry of the generated curve exhibits three distinct geometric phases depending on the complex parameter $c$: a simple phase, a simple with one end spiraling phase, and a third new exotic variant. In this exotic phase, one part of the curve is simple while the other part forms a Jordan curve rooted at the origin. After forming the Jordan curve, this part ceases to grow while disconnecting an open set of positive area from infinity for arbitrarily large times. We determine the phase boundaries in terms of $c$ via the signs of an explicit expression. Within the Hölder-$1/2$-class, we improve the upper bound on a constant sufficient to ensure that the driver creates a two-sided simple curve.
\end{abstract} 

\maketitle

\newpage

\thispagestyle{empty}
\counterwithin{thm}{section}
\counterwithin{equation}{section}
\counterwithin{example}{section}
\counterwithin{figure}{section}

\tableofcontents

\section{Introduction}

\subsection{Context and motivation}

The chordal Loewner differential equation is a first order differential equation with initial value problem given by
\[
\partial_t g_t(z) = \frac{2}{g_t(z)-\lambda(t)}, \qquad g_0(z) = z
\]
where the additional degree of freedom $\lambda \in C^{0}\left([0,\infty);\mathbb R\right)$ is called driving function. Historically, the Loewner equation was first introduced in~\cite{lowner1923untersuchungen} in the context of making partial progress on the Bieberbach conjecture~\cite{bieberbach1916uber}, now known as De Branges's theorem~\cite{de1985proof}. In the celebrated work~\cite{schramm2000scaling} it was understood that the Loewner equation encodes a one-to-one correspondence between a family of growing sets $(K_t)_{t \geq 0}$ in the upper half-plane and a real-valued driving function $\lambda$. As an introductory example, each vertically growing curve in the upper half-plane emanating from a real $x \in \mathbb R$ can be generated by choosing the constant driving function $\lambda \equiv x$. Understanding which families of growing sets $(K_t)_{t \geq 0}$ can be generated by choosing an appropriate driving function $\lambda$ has been intensively studied in the context of mathematical physics and geometric function theory. \newline

In the context of mathematical physics, the Loewner equation was used to construct a one parameter family of random fractal curves in the plane, called Schramm-Loewner evolutions, parametrized by $\kappa \in (0,\infty)$. Such curves and their variants have been proven to describe the scaling limits of various discrete statistical physics models, such as the uniform spanning tree Peano curve~\cite{lawler2004conformal}, Gaussian free field level lines~\cite{schramm2013contour}, critical percolation interfaces \cite{smirnov2001critical,camia2006two}, and critical Ising model interfaces~\cite{smirnov2010conformal,schramm2013contour,chelkak2014convergence}.

In the context of geometric function theory, connecting the geometry of growing sets $(K_t)_{t \geq 0}$ with regularity of the driver $\lambda$ has been of great interest. It is a well established result \cite{marshall2005loewner,rohde2018loewner} that each driving function satisfying $\|\lambda\|_{1/2} < 4$, where $\|\cdot\|_{1/2}$ denotes the Hölder-$1/2$ norm, generates a quasiarc emanating from $\lambda(0)$ and possibly meeting the real line non-tangentially. Furthermore, it is known \cite{lind2005sharp,lind2010collisions} that this bound is sharp, i.e. there exist driving functions with norm bigger than $4$, that generate non-locally growing sets such as infinite spirals.

During the last 20 years, a variety of generalizations of the Loewner equation have been studied both in the deterministic and the probabilistic setup such as \cite{rushkin2006stochastic,sheffield2009exploration,pete2016conformally,peltola2024loewner}. A particular generalization originating from unpublished work~\cite{RS} considers complex-valued drivers $\lambda \in C^{0}\left([0,\infty);\mathbb C\right)$. This new variant of the Loewner equation has recently grown in interest and was investigated in both the deterministic \cite{tran2017loewner,lind2022phase} and the probabilistic case \cite{gwynne2023loewner}. In~\cite{tran2017loewner}, it was shown that complex-valued driving functions with sufficiently small Hölder-$1/2$ norm generate simple curves that evolve simultaneously from two ends. We refer to such curves as two-sided, and they arise naturally in the study of complex-valued driving functions.
The main problem of complex-driven Loewner evolutions generalization leads to our motivation as follows. \newline

Complex-driven Loewner evolutions enjoy a larger class of symmetries and admit very different and surprising properties that deviate largely from the usual real-driven chordal setup. 
It remains an open question which subsets of the complex plane admit a natural encoding through complex-valued driving functions, reminiscent of the use of real-valued drivers in the chordal stetting. Motivated by this issue, we make the following main contributions.
\begin{itemize}
    \item\textbf{Criterion for $C^1$-drivers to create simple two-sided curves.} We begin by generalizing the following classical result, which can be found, for example, in~\cite[Remark~4.36]{lawler2008conformally}.

    Each $\lambda \in C^1([0,\infty);\mathbb R)$, restricted to a finite time interval, generates a simple curve emanating from $\lambda(0)$. As already observed in
    \cite[Theorem~3]{lind2022phase}, this is not the case for a complex-valued driving function and its associated two-sided curve. Motivated by this observations, we are interested in understanding the link between $C^1$-drives and two-sided curves. In Section~\ref{Section:3}, especially Proposition~\ref{lem:simple_sufficient}, we prove a sufficient condition for $\lambda \in C^1([0,\infty);\mathbb C)$ to create a simple two-sided curve.
    \item \textbf{$C^{0}$-drivers and two-sided curves.} Recall that in the real-driven chordal case, one usually encodes a growing family of subsets of the upper half-plane with simply connected complements using the following definition.

    A Loewner chain $(g_t)_{t \geq 0}$ is generated by a curve, if there exists a curve $\gamma$ such that $\mathbb H \setminus K_t = C_{\infty}(\mathbb H \setminus \gamma_t)$ for each $t \geq 0$, here $C_{\infty}$ denotes the unbounded connected component. Observe that this definition includes a geometric object (the curve) and a topological property (unbounded connected component of the complement). In the complex-driven case the geometric object is given by a two-sided curve. However, the topological property remains unclear due to multiple reasons and exotic behaviors of specific hulls. We refer to~\cite[Remark~1.10]{gwynne2023loewner} for a discussion on this matter for complex-driven Schramm-Loewner evolutions. To get a better understanding about this issue, in Section~\ref{Section:3.2} we are interested in $\lambda \in C^0\left([0,\infty);\mathbb R\right)$ that create a two-sided curve and prove the following.\newline

    \begin{enumerate}
        \item The condition for $C^1$-drivers in Proposition~\ref{lem:simple_sufficient} to create a simple curve can be seen as the complex-driven analogue of the usual necessary and sufficient condition that $g_t(\gamma(t+s)) \notin \mathbb R$ for each $t \geq 0$ and $s>0$ for the curve to be simple.  In Example~\ref{ex:counterexample} in Section~\ref{Section:3.1}, we prove that although the condition of Proposition~\ref{lem:simple_sufficient} is reminiscent of the real-driven chordal case, it is not a necessary condition. In Example~\ref{ex:counterexample}, we provide a counterexample $\lambda \in C^0\left([0,2];\mathbb C\right)$ that is smooth except at a single point such that the condition of Proposition~\ref{lem:simple_sufficient} is violated, but $\lambda$ still creates a simple two-sided curve.
        \item Interestingly the driver $\lambda \in C^0\left([0,2];\mathbb C\right)$ in Example~\ref{ex:counterexample} provides yet another new and exotic created hull that has been previously not been observed in the complex-driven case. For each $0 \leq t \leq 2$, the driver creates a two-sided curve that stops growing on one end after time $1$ although the whole curve remains simple until time $2$.
        \item Given the two previous points, conditionally on a continuous driver to create a two-sided curve, we discuss an alternative necessary and sufficient condition for the curve to be simple in Lemma~\ref{lem:characterization_simple} in Section~\ref{Section:3.2}. 
    \end{enumerate}
    \item \textbf{Phase transition of the complex linear driver.} In the spirit of~\cite{lind2022phase} and \cite{kager2004exact}, we are interested how perturbations of a fixed driver $\lambda$ influence the geometry of created hulls. For each $c \in \mathbb C$, we define the complex linear driver $\lambda_c$ by setting $\lambda_c(t):=ct$. In Section~\ref{Section:4}, we prove that each $\lambda_c$ creates a two-sided curve, however the geometry of the curve admits a phase transition in terms of $c$. In Theorem~\ref{thm:classification_linear_complete}, we are able to provide a full classification of the complex plane into three disjoint sets. Each complex linear driver $\lambda_c$ creates either a simple, simple with one-end spiraling or another new exotic variant. In the latter, both parts of the two-sided curve only intersect at the origin. One part remains simple for arbitrary large times, the other part forms a Jordan curve rooted in the origin in $c$-dependent finite time and then totally stops growing afterwards. Moreover, the left hull associated to the two-sided curve never swallows the interior component of the complement of the Jordan curve. In other words, the exotic variant disconnects an open set from infinity for arbitrary large times.
    \item \textbf{Optimal Hölder-1/2 constant.} Similar to the real-driven chordal case, it was established in~[Theorem~1.1]\cite{tran2017loewner} that there exists some constant $\sigma \in (1/3,4)$ such that all drivers satisfying $\|\lambda\|_{1/2} < \sigma$ create hulls that are given by two-sided quasiarcs. This bound was recently tightened in [Theorem~3]\cite{lind2022phase} to $\sigma \in (1/3,3.772)$ and using our classification result of the complex linear driver, we are able to provide a sharper upper bound given by $\sigma \in (1/3,3.552)$, which we believe to be non-optimal.
\end{itemize}
Lastly, we refer to \cite[Chapter~4,Chapter~4]{lawler2008conformally,kemppainen2017schramm} for a general introduction on the real-driven chordal Loewner equations and we discuss additional questions in Section~\ref{Sec:Section_6}.

\subsection{Acknowledgments}

L.B.~is supported by the European Research Council (ERC) under the European Union's Horizon 2020 research and innovation programme (101042460): ERC Starting grant ``Interplay of structures in conformal and universal random geometry" (ISCoURaGe) and by the Academy of Finland Centre of Excellence Programme grant number 346315 ``Finnish centre of excellence in Randomness and STructures (FiRST)". Part of this work was carried out during a visit to the Hausdorff Research Institute for Mathematics, funded by the Deutsche Forschungsgemeinschaft (DFG, German Research Foundation) under Germany's Excellence Strategy – EXC-2047/1 – 390685813. At an early stage, part of this work was carried out during the AScI program provided by Aalto University.

I am grateful to many individuals for their support throughout this project. First and foremost, I would like to sincerely thank Fredrik Viklund for accepting me as a master's thesis student, despite my being based in Switzerland at the time. Moreover, he introduced me to the theory of complex driven Loewner chains and provided me a generous stay at KTH at the beginning of this project. I am also grateful to Joan Lind for many insightful discussions and for creating the simulations presented in Theorem~\ref{thm:classification_linear_complete}. My thanks also go to Andrew Gannon for multiple fun and valuable discussions, one of which inspired Example~\ref{ex:counterexample}. I also wish to thank Steffen Rohde for his thoughtful questions, one of which inspired Question~1 in the open question section, and his intuitive insights during my master's thesis defense. Lastly, I want to thank my supervisor Eveliina Peltola for her support, numerous discussions, and comments on earlier drafts of this work.

\section{Preliminaries of complex-driven Loewner chains}\label{Section:2}
In this section, we establish our notation and review fundamental results on complex-driven Loewner chains. Section \ref{Section:2.1} introduces the main definition along with basic properties of these chains. Section \ref{Section:2.2} reviews known results on complex-driven Loewner chains in the setting of hulls, establishes basic properties, and highlights similarities and differences with the usual setup of real-valued driving functions.

\subsection{Basic definitions}\label{Section:2.1}
We begin by defining the main objects of this work.
\begin{defn}[Loewner Differential equation]\label{Def:complex_loewner_chains}
    Let $\lambda \in C^{0}\left([0,\infty),\C\right)$ be a continuous function. The \emph{Loewner differential equation} is defined as the ordinary differential equation with initial value problem given by 
    \begin{equation}\label{eq:Loewner_eq}
        \partial_t g_t(z) = \frac{2}{g_t(z)-\lambda(t)}, \qquad g_0(z) = z,
    \end{equation}
    where $t \geq 0$ and $z \in \mathbb C$ is fixed. We usually call $\lambda$ a driving function, or simply driver.
\end{defn}
We make the following remarks.
\begin{itemize}
    \item For a fixed $z \in \C$, we denote its \emph{blow-up time} as follows
    \[
    T_z := \inf \{t \geq 0 \colon \lim_{s \uparrow t} |g_s(z)-\lambda(s)| = 0\}.
    \]
    \item For fixed $t \geq 0$, we denote the \emph{left hull at time $t$} by
    \[
    L_{t}(\lambda) := \{z \in \mathbb C \, \colon \, T_z \leq t\},
    \]
    where we usually just write $L_t$ instead of $L_t(\lambda)$. In other words, the left hull at time $t$ is the set of all the points that experienced a blow-up until time $t$. 
    \item For any $z \in \C \setminus \{\lambda(0)\}$, the solution to~(\ref{eq:Loewner_eq}) is unique and exists up to a strictly positive time $t \in (0,T_z)$. Consider the set $\{(t,w) \in [0,\infty) \times \C \, \colon \, |w-\lambda(t)| \geq \varepsilon\}$, where $\varepsilon>0$. On this set, define the function
    \[
    (t,w) \mapsto F(t,w):= \frac{2}{w-\lambda(t)}.
    \] 
    Since $\lambda$ is continuous by Definition~\ref{Def:complex_loewner_chains}, the function $F(\cdot,w)$ is continuous. For each fixed $t \in [0,\infty)$, a simple algebraic manipulation yields
    \[
    \big|F(t,w_1)-F(t,w_2)\big| \leq \frac{2}{\varepsilon} \big|w_1-w_2\big|,
    \]
    and thus $w \mapsto F(t,w)$ is Lipschitz continuous on $\{(t,w) \in [0,\infty) \times \C \colon |w-\lambda(t)| \geq \varepsilon\}$. Using the basic theory of ordinary differential equations, we obtain the existence of a unique solution of~(\ref{eq:Loewner_eq}) for each $z \in \mathbb C \setminus \{\lambda(0)\}$ up to some strictly positive $t \in (0,T_z)$. Furthermore, observe that $L_0=\{\lambda(0)\}$.
    \item Since each fixed $z \in \C$ admits a unique solution, we collect all of them together. We call a collection of maps $(g_t)_{t \geq 0}$ 
    \emph{(forward) complex Loewner chain driven by $\lambda$}, if for each $z \in \C$, the function $(g_t(z))_{0 \leq t < T_z}$ solves the Loewner differential equation given by~(\ref{eq:Loewner_eq}).
    \item We are also interested in a centered version of Definition~\ref{Def:complex_loewner_chains}. We denote by $f_t(\cdot) = g_t(\cdot) - \lambda(t)$ the \emph{(forward centralized) complex Loewner chain driven by $\lambda$} and usually just write centered Loewner chain.
    \item In the context of Loewner theory, it is also important to have a backwards running version of Definition~\ref{Def:complex_loewner_chains}. For fixed $z \in \C$, the \emph{backward Loewner equation} is defined as the ODE with initial value problem given by 
    \begin{equation}\label{eq:backwards_loewner_eq}
        \partial_t h_t(z) = \frac{-2}{h_t(z)-\lambda(t)}, \qquad h_0(z) = z,
    \end{equation}
    where $t \geq 0$.
    \item We want to emphasize main differences and similarities with respect to the original version of the Loewner equation. To differentiate between our setup and the already well studied case, we will refer to the original setup as \emph{the real-driven chordal case}. If we speak about the real-driven chordal case, we restrict Definition~\ref{Def:complex_loewner_chains} by only considering $z \in \overline{\mathbb H}$ and real-valued drivers $\lambda \in C^{0}\left([0,\infty) ; \mathbb R\right)$. Following the same remarks as above, for each $t \geq 0$, instead of writing the left hull $L_t$ as the set of all points with $T_z \leq t$, we are going to write $K_t := \{z \in \overline{\mathbb H} \, \colon \, T_z \leq t\}$ as an indication of working in the chordal case.
\end{itemize}

\subsection{Properties of left and right hulls}\label{Section:2.2}

For a brief moment, let us review some facts about the real-driven chordal case. We refer to~\cite[Chapter~4]{lawler2008conformally} as our main reference. It can be shown that the forward Loewner chain $(g_t)_{t \geq 0}$ compromises a family of maps such that for each $t \geq 0$, the map $g_t \colon \mathbb H \setminus K_{t} \to \mathbb H$ is the unique conformal map which satisfies the normalization at infinity given by
\begin{equation}
    g_t(z) = z + \frac{2t}{z}+O(|z|^{-2}) \qquad \text{ as } z \to \infty.
\end{equation}
A natural question is how this setup generalizes to complex-driven Loewner chains. \newline

\textbf{The left/right hull.} To justify that $g_t \colon \C \setminus L_{t} \to g_t(\mathbb C\setminus L_t)$ is indeed the unique conformal map similar to the real-driven chordal case, recall that all the proofs in~\cite[Chapter~4]{lawler2008conformally} only use the structure of the underlying Loewner differential equation given by Definition~\ref{Def:complex_loewner_chains} and its remarks. Analogously to the real-driven chordal case, one might expect a fixed reference domain for the mapping-out function, for instance
\[ 
g_t \colon \mathbb C \setminus L_{t} \to \C,
\]
however this is impossible for multiple reasons. Let us provide an elementary reasoning in the spirit of Definition~\ref{Def:complex_loewner_chains}. For each $t \geq 0$, it is impossible that $\lambda(t)$ lies in the image of $g_t$. In fact, if there is some $t \geq 0$, such that $g_t(z)=\lambda(t)$, then this implies $z \in L_{t}$, which is impossible by the definition of $T_z$. As a result, for each $t \geq 0$, the image of $g_t$ is always a proper subset of the complex plane, while the complement of its image contains at least $\lambda(t)$. We define this "residual" set as $R_{t}(\lambda) := \C \setminus g_t(\C \setminus L_{t}(\lambda))$, which we are going to call \emph{the right hull at time $t$}. We usually write $R_t$ instead of $R_t(\lambda)$. \newline

\textbf{The expansion around infinity.} In the real-driven chordal case, for fixed $t \geq 0$, the map $g_t(z)$ behaves like the identity map around infinity. Precisely speaking, $g_t$ admits the Laurent expansion around infinity given by
\begin{equation}\label{eq:normalization_infinity}
    g_t(z) = z + \frac{2t}{z} + O(|z|^{-2}), \qquad \text{ as } z \to \infty.
\end{equation}
We are going to provide a quick heuristic argument that the same expansion is valid in the complex-driven case. Fix some $t \geq 0$, by similar means as presented in~\cite[Chapter~4.1]{lawler2008conformally}, we have 
\[
g_t(z) = z + o(1), \qquad \text{ as } z \to \infty.
\]
As a result, the map $g_t$ admits the following expansion around infinity
\[
g_t(z) = z + \frac{a(t)}{z} + \frac{b(t)}{z^2} + O(|z|^{-3}), \qquad \text{ as } z \to \infty,
\]
where $a(t),b(t) \in \mathbb C$ are coefficients, yet to be determined. Using this expansion, for large enough $z$, the Loewner differential equation reads
\[
\partial_t g_t(z) = \frac{2}{g_t(z)-\lambda(t)} = \frac{2}{z\left(1+ \frac{a(t)}{z^2} - \frac{\lambda(t)}{z} + \dots\right)} = \frac{2}{z} \left(1-\frac{a(t)}{z^2}+\frac{\lambda(t)}{z}+\dots\right).
\]
Rewriting the above, we have
\[
\partial_t g_t(z) = \frac{2}{g_t(z)-\lambda(t)} = \frac{2}{z} + \frac{2\lambda(t)}{z^2} + O(|z|^{-3}), \qquad \text{ as } z \to \infty.
\]
Comparing coefficients, we have $a(t)=2t$ and $b(t)=2\int^{t}_0 \lambda(s) ds$, so
\[
g_t (z) = z + \frac{2t}{z} + \frac{2\int^{t}_0 \lambda(s) ds}{z^2} + O (|z|^{-3}), \qquad \text{ as } z \to \infty.
\]
As a result, for fixed $t \geq 0$, we have
\[
g_t(z) = z + \frac{2t}{z} + O(|z|^{-2}),
\]
where only the $O$-term depends on the complex-valued driver $\lambda(s)$ for $0 \leq s \leq t$ but the first order term remains real-valued. The following definition summarizes the above discussion.
\begin{defn}[Left/Right hulls]\label{Def:Left/Right_hulls}
    Let $\lambda \in C^{0}\left([0,\infty),\mathbb C\right)$ be a continuous function. We define the left hull as $L_{t}:= \{z \in \C \, \colon \,T_z \leq t\}$, i.e. the complement of the domain of $g_t$. We define the right hull as $R_{t} := \C \setminus g_t(\C \setminus L_t)$, i.e. the complement of the co-domain of $g_t$. All in all, 
    \[
    g_t \colon \C \setminus L_t \to \C \setminus R_t,
    \]
    is the unique conformal map that satisfies the normalization given by~(\ref{eq:normalization_infinity}).
\end{defn}
Let us make the following remarks.
\begin{itemize}
    \item By definition, it is clear that $L_{t}$ is closed and bounded, and thus compact. Furthermore, it was shown in~\cite[Lemma~2.4]{gwynne2023loewner} that $L_{t}$ is also connected.
    \item The left hull is a monotonically increasing (not necessarily strictly increasing) set in time, i.e. $L_{s} \subseteq L_{t}$ for each $0 \leq s \leq t$. It is natural to ask, whether or not a similar statement for the right hull is also true. This is not the case, and we refer to~\cite[Remark~1.11]{gwynne2023loewner} or Lemma~\ref{lem:5} for a quick argument.
    \item On the deterministic side, it has been shown in~\cite[Theorem~3]{lind2022phase} that it is possible for the complements of $L_{t}$ and $R_{t}$ to be at least two-connected. In the work~\cite{gwynne2023loewner} one considers the left and right hulls, driven by $\lambda(t) = \sqrt{a}B_t + i \sqrt{b} \widetilde{B}_t$, where $B,\widetilde{B}$ are (standard) real Brownian motion with $\text{Cov}(B_t,\widetilde{B}_t)=\left(ct/\sqrt{ab}\right)$ for $c \in [-\sqrt{ab},\sqrt{ab}]$. Similarly to the deterministic case, in~\cite[Theorem~1.10]{gwynne2023loewner} and its remarks the following two statements are proven. If $a,b \neq 0$, then for any $c \in [-\sqrt{ab},\sqrt{ab}]$ almost surely for each $t>0$ the complement of the left hull is at least two-connected. Furthermore, the complementary component that is disconnected from infinity has at least non-empty interior. Dealing with left (resp. right hulls) with multiply connected complements is the major key difference compared to the real-driven chordal driven case.
    \item It can be shown that for each $t \geq 0$, the map $g_t$ can be extended continuously to all the points that just experienced a blow-up. Precisely, in~\cite[Lemma~2.5]{gwynne2023loewner} it was proven that for each $z \in \C$ with $T_z \leq C < \infty$, one has 
    \[
    \lim_{t \uparrow T_z} |g_t(z)-\lambda(t)| = 0.
    \]
    As a result, the continuous extension of $g_t$ maps all points in $L_t \setminus \cup_{0 \leq s <t} L_s$ to $\lambda(t)$, i.e. the driver at the time $t$.
    \item Using Definition~\ref{Def:complex_loewner_chains} it is a classical fact that one can bound the real (resp. imaginary) part of the left (resp. right hull). In~\cite[Lemma~12]{lind2022phase} it was proven that for each $T \geq 0$ the left (resp. right) hulls are contained in the real (resp. imaginary) strips given by
    \begin{align*}
        L_T &\subseteq \{z \in \mathbb C \, \colon \, \min_{0 \leq t \leq T}\re(\lambda(s)) \leq \re(z) \leq \max_{0 \leq t \leq T}\re(\lambda(t)) \,\},\\
        R_T &\subseteq \{z \in \mathbb C \, \colon \, \min_{0 \leq t \leq T}\im(\lambda(s)) \leq \im(z) \leq \max_{0 \leq t \leq T}\im(\lambda(t))\, \}.
    \end{align*}
    \item Denote by $\|\cdot\|_{1/2}$ the Hölder-1/2 norm. In~\cite[Theorem~1.1]{tran2017loewner} it was shown that there exists $\sigma \in(1/3,4)$ such that if $\|\lambda \vert_{[0,T]}\|_{1/2} \leq \sigma$, then the left hull up to time $T$ is given by a quasiarcs. Considering the one-parameter family of driving functions given by $(c\sqrt{1-t})_{c \in \mathbb C}$. It was shown in~\cite[Theorem~3]{lind2022phase} that $\sigma \leq 3.722$ and we are going to give a sharper upper bound for $\sigma$ in Lemma~\ref{lem:optimal_sigma}, which we believe to be non-optimal.
\end{itemize}

\textbf{Properties of left/right hulls.}

As already observed and mentioned in~\cite[Chapter~4,Chapter~2,Chapter~2]{tran2017loewner,gwynne2023loewner,lind2022phase} complex-driven Loewner chains conserve the translation, scaling property of the real-driven chordal case while also admitting three additional reflection principles and a duality principle. Roughly speaking, the duality principle states that the left (resp. right) hull is a rotated right (resp. left) hull created by a rotated and time-reversed version of the original driver.

For the deterministic setup, the following properties are already stated in~\cite[Chapter~4,Chapter~2]{tran2017loewner,lind2022phase}. For the sake of completeness, we are going to provide the proofs in the deterministic setting and the experienced reader might skip to the additional reflection property stated in Lemma~\ref{lem:reflections} or to Section~\ref{Section:3}. Let us start with the translation and scaling property.

\begin{lem}[Translation and scaling properties]\label{Lem:4}
    The left (resp. right) hulls driven by $\lambda$ satisfy the following translation and scaling property.
    \begin{itemize}[label={}]
        \item \textbf{Translation Property:} For fixed $a \in \C$ and arbitrary $t \geq 0$, the left (resp. right) hulls satisfy
            \begin{align*}
                L_{t}(\lambda+a) &= a + L_{t}(\lambda)\tag{\textcolor{blue}{TL}}\label{trans:0},\\
                R_{t}(\lambda+a) &= a + R_{t}(\lambda)\tag{\textcolor{blue}{TR}}\label{trans:1}.
            \end{align*}
            
        \item \textbf{Scaling Property:} For fixed $a > 0$ and arbitrary $t \geq 0$, the left (resp. right) hulls satisfy
            \begin{align}
                L_{t}(a\lambda(\cdot/a^2)) &= a L_{t/a^2}(\lambda)\tag{\textcolor{blue}{SL}}\label{scaling:0},\\
                R_{t}(a\lambda(\cdot/a^2)) &= a R_{t/a^2}(\lambda)\tag{\textcolor{blue}{SR}}\label{scaling:1},
            \end{align}
            here $\lambda(\cdot/a^2)$ denotes the mapping $s \mapsto \lambda(s/a^2)$.
    \end{itemize}
\end{lem}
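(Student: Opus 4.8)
The plan is to prove all four identities by the same two-step recipe: first exhibit an explicit candidate for the Loewner chain driven by the modified driver, built out of the chain $(g_t)_{t \geq 0}$ driven by $\lambda$; then verify that this candidate solves~(\ref{eq:Loewner_eq}) with the correct driver and initial condition, so that the uniqueness discussed in the remarks after Definition~\ref{Def:complex_loewner_chains} identifies it with the genuine chain. Once the chains are matched, the only remaining work is to track how the blow-up condition $|g_t(z)-\lambda(t)| \to 0$, and hence the blow-up time $T_z$, transforms under the affine change of driver, and to translate this into the asserted statements for $L_t$ and $R_t$.

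For the translation property, set $\tilde g_t(z):= g_t(z-a)+a$. Differentiating, $\partial_t \tilde g_t(z) = \partial_t g_t(z-a) = 2/(g_t(z-a)-\lambda(t)) = 2/(\tilde g_t(z)-(\lambda(t)+a))$, and $\tilde g_0(z)=z$, so $(\tilde g_t)$ is the chain driven by $\lambda+a$. Since $|\tilde g_s(z)-(\lambda(s)+a)| = |g_s(z-a)-\lambda(s)|$ for every $s$, the blow-up time of $z$ under the driver $\lambda+a$ equals the blow-up time of $z-a$ under $\lambda$; hence $z \in L_t(\lambda+a) \iff z-a \in L_t(\lambda)$, which is~(\ref{trans:0}). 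For~(\ref{trans:1}), observe that on $\C \setminus L_t(\lambda+a) = a+(\C\setminus L_t(\lambda))$ we have $\tilde g_t(z)=g_t(z-a)+a$, so $\tilde g_t(\C\setminus L_t(\lambda+a)) = a + g_t(\C\setminus L_t(\lambda))$, and taking complements in $\C$ gives $R_t(\lambda+a)=a+R_t(\lambda)$.

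For the scaling property, fix $a>0$ and set $\tilde g_t(z):= a\, g_{t/a^2}(z/a)$. The chain rule gives $\partial_t \tilde g_t(z) = a^{-1} (\partial_s g_s)(z/a)\big|_{s=t/a^2} = 2/\big(a\, g_{t/a^2}(z/a) - a\lambda(t/a^2)\big) = 2/\big(\tilde g_t(z) - a\lambda(t/a^2)\big)$, together with $\tilde g_0(z)=z$, so $(\tilde g_t)$ is the chain driven by $s\mapsto a\lambda(s/a^2)$. Because $|\tilde g_s(z)-a\lambda(s/a^2)| = a\, |g_{s/a^2}(z/a)-\lambda(s/a^2)|$ and $a>0$, the blow-up time of $z$ under $a\lambda(\cdot/a^2)$ equals $a^2$ times the blow-up time of $z/a$ under $\lambda$; therefore $z \in L_t(a\lambda(\cdot/a^2)) \iff z/a \in L_{t/a^2}(\lambda)$, which is~(\ref{scaling:0}). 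The same image computation as before, now using $\C\setminus L_t(a\lambda(\cdot/a^2)) = a\big(\C\setminus L_{t/a^2}(\lambda)\big)$ and $\tilde g_t(z)=a\, g_{t/a^2}(z/a)$, yields $R_t(a\lambda(\cdot/a^2)) = a\, R_{t/a^2}(\lambda)$, i.e.~(\ref{scaling:1}).

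I do not expect a genuine obstacle here; the content is entirely the bookkeeping of how the singular set $\{z : g_t(z)=\lambda(t)\}$, and hence the definition of $T_z$, behaves under affine changes of the driver. The one point requiring a little care is that the uniqueness statement applies only away from the driver's starting point and only up to the blow-up time, so the candidate $\tilde g$ is to be compared with the true chain on $\C \setminus \{(\lambda+a)(0)\}$ (resp. $\C \setminus \{a\lambda(0)\}$) for $t$ below the relevant blow-up time; the starting point itself is handled separately via $L_0 = \{\text{driver}(0)\}$ and the monotonicity $L_s \subseteq L_t$. (Alternatively, one could verify the normalization~(\ref{eq:normalization_infinity}) for $\tilde g_t$ directly and invoke uniqueness of the normalized conformal map from Definition~\ref{Def:Left/Right_hulls}, but the ODE argument above is self-contained.)
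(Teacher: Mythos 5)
Your proof is correct. The core of the argument — exhibiting the candidate chains $\tilde g_t(z)=g_t(z-a)+a$ and $\tilde g_t(z)=a\,g_{t/a^2}(z/a)$, verifying they solve the Loewner ODE with the modified driver, and invoking uniqueness — is exactly what the paper does. There are two genuine (and both favorable) differences in presentation. First, you track the blow-up condition $|\tilde g_s(z)-(\lambda(s)+a)|=|g_s(z-a)-\lambda(s)|$ (and its scaling analogue) to read off the equality of blow-up times directly; this yields both set inclusions in one stroke, whereas the paper runs the ODE-and-uniqueness argument twice, once in each direction (and for scaling only sketches the second direction). Second, and more substantively, you prove~(\ref{trans:1}) and~(\ref{scaling:1}) directly by computing the image of the complement and taking complements again, giving a self-contained proof of all four identities; the paper instead proves only the left-hull statements here and defers the right-hull statements to the duality principle (Lemma~\ref{lem:6}), which appears later. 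Your route avoids that forward reference and is a bit cleaner, at the cost of a short image computation that the paper gets for free once duality is in hand. Both approaches are sound; yours is arguably preferable pedagogically since Lemma~\ref{Lem:4} then stands on its own.
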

Before beginning the proof, we briefly outline the strategy. We will prove each statement only for the left hull; once the duality principles in Lemma~\ref{lem:6} are established, the corresponding results for the right hull follow automatically.
\begin{proof}[Proof of Lemma~\ref{Lem:4}]
We first show~(\ref{trans:0}). Fix $a \in \C$, it suffices to prove that $z \in \mathbb C \setminus L_{t}(\lambda+a)$ if and only if $z \in \mathbb C \setminus \left(a + L_{t}\right)$. Fix $z \in \mathbb C \setminus \left(a + L_{t}\right)$ and denote by $\widetilde{g}_s(z) := g_s(z-a) + a$, where $s \in [0,t]$ and $(g_s)_{0 \leq s \leq t}$ is the Loewner chain driven by $\lambda$. Now for each $0 \leq s \leq t$ by direct computation,
\[
\partial_s \widetilde{g}_s(z) = \frac{2}{g_s(z-a) - \lambda(s)} = \frac{2}{\widetilde{g}_s(z) - (\lambda(s)+a)}, \qquad \widetilde{g}_0(z)=z.
\]
Consequently, $(\widetilde{g}_s(z))_{0\leq s \leq t}$ is a solution to the Loewner equation driven by $\lambda+a$. By uniqueness of the solution of the Loewner equation for fixed $z$, we conclude that $z \in \mathbb C \setminus L_t(\lambda+a)$. The other direction follows by a similar argument. Fix $z \in \mathbb C\setminus L_t(\lambda+a)$, and consider $\widetilde{g}_s(z):=g_s(z+a)-a$, where $(g_s)_{0 \leq s \leq t}$ is the Loewner chain driven by $\lambda+a$. Now, since $z \in \mathbb C \setminus L_t(\lambda+a)$, for each $0 \leq s \leq t$ by direct computation,
\[
\partial_s \widetilde{g}_s(z-a) = \frac{2}{g_s(z)-\left(\lambda(s)+a\right)} = \frac{2}{\widetilde{g}_s(z-a)-\lambda(s)}, \qquad \widetilde{g}_0(z-a)=z-a.
\]
Consequently, $(\widetilde{g}_s(z-a))_{0 \leq s \leq t}$ is a solution to the Loewner equation driven by $\lambda$. By uniqueness of the solution, we conclude that $z-a \in \mathbb C \setminus L_t(\lambda)$, so $z \in \mathbb C \setminus L_t(\lambda)+a$. This finishes the proof of~(\ref{trans:0}) and it is left to prove~(\ref{scaling:0}), where we apply a contraposition argument. Consider any $z \in \mathbb C \setminus (a L_{t/a^2}(\lambda))$ and we want to show that $z \in \mathbb C \setminus \left(L_{t}(a\lambda(\cdot/a^2))\right)$. Denote by $\widetilde{g}_s(z) := a g_{s/a^2}(z/a)$, where $s \in [0,t]$ and $(g_s)_{0 \leq s \leq t/a^2}$ is the Loewner chain driven by $\lambda$. Then since $z/a \notin L_{t/a^2}(\lambda)$, for each $0 \leq s \leq t$ by direct computation
\[
\partial_{s} \widetilde{g}_s(z) = \partial_s a{g}_{s/a^2}(z/a) = \frac{2}{a(g_{s/a^2}(z/a)-\lambda(s/a^2))} = \frac{2}{\widetilde{g}_s(z) - a\lambda(s/a^2)}, \qquad \widetilde{g}_0(z) = z.
\]
Since the solution of the Loewner equation is unique for fixed $z$, we conclude that $z \in \mathbb C \setminus \left(L_{t}(a\lambda(\cdot/a^2))\right)$. The other inclusion follows a similar argument, which establishes~(\ref{scaling:0}). This finishes the proof.
\end{proof}

Let $A \subseteq \C$ and denote by $\ast$ the complex conjugation, then we denote by $A^{\ast}:=\{-z^{\ast} \mid z \in A\}$ the complex conjugation of $A$. In the complex-driven case, we obtain the following three additional reflection properties.
\begin{lem}[Reflection properties]\label{lem:reflections}
For each $t \geq 0$, the left hull $L_t$ and the right hull $R_t$ satisfy the following reflection principles.
    \begin{align}
                L_{t}(\lambda^{\ast}) &= L^{\ast}_{t}(\lambda)\tag{\textcolor{blue}{RR}}\label{reflection:Real},\\
                L_{t}(-\lambda^{\ast}) &= -L^{\ast}_{t}(\lambda)\tag{\textcolor{blue}{RI}}\label{Reflection:Imaginary}, \\
                L_{t}(-\lambda) &= -L_{t}(\lambda)\tag{\textcolor{blue}{RO}}\label{Reflection:Origin},
    \end{align}
where the same statements are true for the right hulls $(R_{t})_{t \geq 0}$.
\end{lem}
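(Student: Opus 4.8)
The plan is to treat all three identities uniformly by the same change‑of‑variables device already used in the proof of Lemma~\ref{Lem:4}. Note first that the three operations appearing in the statement are induced by the three involutive isometries of $\C$
\[
\phi_{\mathrm O}(z) := -z, \qquad \phi_{\mathrm I}(z) := z^{\ast}, \qquad \phi_{\mathrm R}(z) := -z^{\ast},
\]
in the precise sense that $-\lambda=\phi_{\mathrm O}\circ\lambda$, $-\lambda^{\ast}=\phi_{\mathrm I}\circ\lambda$, $\lambda^{\ast}=\phi_{\mathrm R}\circ\lambda$, while on the side of hulls $-L_t(\lambda)=\phi_{\mathrm O}(L_t(\lambda))$, $-L_t^{\ast}(\lambda)=\phi_{\mathrm I}(L_t(\lambda))$, $L_t^{\ast}(\lambda)=\phi_{\mathrm R}(L_t(\lambda))$. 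Thus (\ref{reflection:Real}), (\ref{Reflection:Imaginary}) and (\ref{Reflection:Origin}) are all instances of the single assertion
\[
L_t(\phi\circ\lambda)=\phi\big(L_t(\lambda)\big) \qquad \text{for all }t\ge 0,\ \phi\in\{\phi_{\mathrm O},\phi_{\mathrm I},\phi_{\mathrm R}\}.
\]
Since $\phi_{\mathrm R}=\phi_{\mathrm O}\circ\phi_{\mathrm I}$, the case $\phi_{\mathrm R}$ — that is, (\ref{reflection:Real}) — follows formally from the cases $\phi_{\mathrm O}$ and $\phi_{\mathrm I}$, so it is enough to settle those two.

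Fix $\phi\in\{\phi_{\mathrm O},\phi_{\mathrm I}\}$, put $\widetilde\lambda:=\phi\circ\lambda$, and let $z\in\C$ be such that $\phi(z)\notin L_t(\lambda)$, so that $g_s(\phi(z))$ is defined for $s\in[0,T_{\phi(z)}(\lambda))$. Set $\widetilde g_s(z):=\phi\big(g_s(\phi(z))\big)$ on that interval. I would then verify three things: (i) $\widetilde g_0(z)=\phi(\phi(z))=z$; (ii) $(\widetilde g_s(z))$ solves the Loewner equation driven by $\widetilde\lambda$; and (iii) $|\widetilde g_s(z)-\widetilde\lambda(s)|=|g_s(\phi(z))-\lambda(s)|$ throughout the interval of definition. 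For (ii), when $\phi=\phi_{\mathrm O}$ the chain rule gives directly $\partial_s\widetilde g_s(z)=-\tfrac{2}{g_s(\phi(z))-\lambda(s)}=\tfrac{2}{\widetilde g_s(z)-(-\lambda(s))}$; when $\phi=\phi_{\mathrm I}$ one applies complex conjugation to $\partial_s g_s(\phi(z))=\tfrac{2}{g_s(\phi(z))-\lambda(s)}$ and obtains $\partial_s\overline{g_s(\phi(z))}=\tfrac{2}{\overline{g_s(\phi(z))}-\overline{\lambda(s)}}$, which is exactly the Loewner equation for $\widetilde\lambda=\overline{\lambda}$. Property (iii) is immediate from $\widetilde\lambda(s)=\phi(\lambda(s))$ and the fact that $\phi$ is an isometry.

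By (i)--(iii) and the uniqueness of solutions of the Loewner equation established in Section~\ref{Section:2.1}, $(\widetilde g_s(z))$ is the Loewner chain driven by $\widetilde\lambda$ evaluated at $z$; moreover (iii) forces the blow‑up times to agree, $T_z(\widetilde\lambda)=T_{\phi(z)}(\lambda)$, because $|\widetilde g_s(z)-\widetilde\lambda(s)|$ vanishes exactly when $|g_s(\phi(z))-\lambda(s)|$ does. Hence $z\notin L_t(\widetilde\lambda)$, which proves $\phi(\C\setminus L_t(\lambda))\subseteq\C\setminus L_t(\widetilde\lambda)$; the reverse inclusion follows by running the same argument with $\lambda$ replaced by $\widetilde\lambda$, using $\phi\circ\phi=\mathrm{id}$. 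This gives $L_t(\phi\circ\lambda)=\phi(L_t(\lambda))$ for $\phi\in\{\phi_{\mathrm O},\phi_{\mathrm I}\}$, hence also for $\phi_{\mathrm R}$, i.e.\ all of (\ref{reflection:Real}), (\ref{Reflection:Imaginary}), (\ref{Reflection:Origin}) for the left hulls. For the right hulls one can either invoke the duality principle of Lemma~\ref{lem:6}, exactly as announced for Lemma~\ref{Lem:4}, or argue directly: since $\widetilde g_t=\phi\circ g_t\circ\phi$ carries $\C\setminus L_t(\widetilde\lambda)=\phi(\C\setminus L_t(\lambda))$ onto $\phi\big(g_t(\C\setminus L_t(\lambda))\big)=\phi(\C\setminus R_t(\lambda))=\C\setminus\phi(R_t(\lambda))$, one reads off $R_t(\widetilde\lambda)=\phi(R_t(\lambda))$.

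The individual computations are routine; the point that really needs care — and essentially the only place the argument could fail if carried out sloppily — is step (iii) and its use: one must confirm that the substituted map does not merely solve the equation on some a priori smaller time interval but genuinely has the same blow‑up time as $g_{\cdot}(\phi(z))$, so that the conclusion concerns the full hulls $L_t$ and $R_t$ and not just the conformal maps on a subdomain. For the anti‑holomorphic involutions $\phi_{\mathrm I}$ and $\phi_{\mathrm R}$ there is also the minor bookkeeping of conjugating consistently when differentiating, since conjugation sends $w\mapsto 2/w$ to $w\mapsto 2/\overline{w}$.
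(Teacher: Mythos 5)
Your proof is correct and is in essence the same change-of-variables argument the paper uses: conjugate the Loewner chain by a plane involution, apply ODE uniqueness, and observe that the involution is an isometry so blow-up times transform as they should. What you do differently is organizational: you abstract the three statements into the single identity $L_t(\phi\circ\lambda)=\phi(L_t(\lambda))$ for $\phi\in\{\phi_{\mathrm O},\phi_{\mathrm I},\phi_{\mathrm R}\}$ and then prove two of the three cases and derive the third from the Klein-four structure. The paper does the same derivation but with a different generating pair — it computes the two conjugation-type cases directly and deduces $L_t(-\lambda)=-L_t(\lambda)$ from them, whereas you prove $-z$ and one conjugation directly. Your making step (iii) explicit (the isometry forces blow-up times to agree, so the argument really gives equality of hulls and not just of conformal maps on some subdomain) is a genuine improvement in clarity over the paper, which leaves this implicit. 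Your direct argument for the right hulls via $\widetilde g_t=\phi\circ g_t\circ\phi$ is also a nice, self-contained alternative to the paper's route, which silently defers the right-hull claims to the duality principle of Lemma~\ref{lem:6}.

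One small point worth flagging: your bookkeeping of which $\phi$ corresponds to which tag appears to be internally inconsistent. With $\phi_{\mathrm I}(z)=z^{\ast}=\overline z$, your own computation in step (ii) produces the driver $\overline\lambda$, i.e.\ it establishes $L_t(\lambda^{\ast})=L_t^{\ast}(\lambda)$, which is~(\ref{reflection:Real}), not~(\ref{Reflection:Imaginary}) as the opening paragraph claims; correspondingly $\phi_{\mathrm R}(z)=-\overline z$ gives~(\ref{Reflection:Imaginary}). This mislabeling is likely traceable to the paper's own inconsistent definition of $A^{\ast}$ (which is stated as $\{-z^{\ast}:z\in A\}$ but is used throughout as plain conjugation), and it does not affect the validity of the argument since all three involutions are covered; you may simply want to swap the I/R tags so that the headers match the computations.
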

\begin{proof}
    We start with~(\ref{reflection:Real}). Take, any $z \in \mathbb C \setminus L^{\ast}_{t}(\lambda)$, we first show that $z \in \mathbb C \setminus L_{t}(\lambda^{\ast})$. Denote by $\widetilde{g}_s(z):= g^{\ast}_s(z^{\ast})$, where $(g_s)_{0 \leq s \leq t}$ is the Loewner chain driven by $\lambda$. Since $z^{\ast} \in \mathbb C \setminus L_t$, by direct computation
    \[
    \partial_s \widetilde{g}_s(z) = \left(\frac{2}{g_s(z^{\ast})-\lambda(s)}\right)^{\ast} = \frac{2}{\widetilde{g}_s(z)-\lambda^{\ast}(s)}, \qquad \widetilde{g}_0(z)=z,
    \]
    for any $0 \leq s \leq t$. By uniqueness of the solution of the Loewner equation for fixed $z$, we conclude $z \in \mathbb C \setminus L_{t}(\lambda^{\ast})$. The other implication follows analogously, and this finishes~(\ref{reflection:Real}). Next, we want to prove statement~(\ref{Reflection:Imaginary}). Take any $z \in \mathbb C \setminus \left(-L^{\ast}_t\right)$, and denote by $\widetilde{g}_s(z):=-g^{\ast}_s(-z^{\ast})$, where $(g_s)_{0 \leq s \leq t}$ is the Loewner chain driven by $\lambda$. Since $z \in \mathbb C \setminus \left(-L^{\ast}_t\right)$, by direct computation
    \[
     \partial_s \widetilde{g}_s(z) = \left(\frac{-2}{g_s(-z^{\ast})-\lambda(s)}\right)^{\ast} = \frac{2}{\widetilde{g}_s(z)-(-\lambda^{\ast}(s))}, \qquad \widetilde{g}_0(z)=z,
    \]
    where $0 \leq s \leq t$. As a result $z \in \mathbb C \setminus L_t(-\lambda^{\ast})$ and the other inclusion follows by considering a similar transformation. It remains to prove~(\ref{Reflection:Origin}), but we can conclude this directly by combining the two previous properties. For each $t \geq 0$, applying first~(\ref{Reflection:Imaginary}) and then~(\ref{reflection:Real}) gives
    \[
    L_t(-\lambda) = -L^{\ast}_t(\lambda^{\ast}) = -L_t(\lambda).
    \]
    This finishes the proof for the reflection properties of the left hulls.
\end{proof}
Next we prove the concatenation properties in the complex-driven case.
\begin{lem}[Concatenation properties]\label{lem:5}
Fix $t \geq 0$ and for each $s \geq 0$ we denote by $L_{t,t+s}:=L_s(\lambda(t+\cdot))$ the left hull driven by $\lambda(t+\cdot)$ up to time $s$. Similarly, we denote by $R_{t,t+s}:=R_s(\lambda(t+\cdot))$ the corresponding right hull up to time $s$. We also denote by $(g_{t,t+s})_{s \geq 0}$ the Loewner chain driven by $\lambda(t+\cdot)$. For each $s,t \geq 0$, the following statements are true.
\begin{align}
    g_{t+s} &= g_{t,t+s} \circ g_t \vert_{\C \setminus L_{t+s}},\tag{\textcolor{blue}{C}}\label{concat:0} \\
    g_t(L_{t+s}\setminus L_{t}) &= L_{t,t+s} \setminus R_{t}\tag{\textcolor{blue}{CST}}\label{concat:1} \\
    L_{t+s} &= L_{t} \cup g^{-1}_t(L_{t,t+s} \setminus R_{t})\tag{\textcolor{blue}{CL}}\label{concat:2} \\
    R_{t+s} &= R_{t,t+s} \cup g_{t,t+s}(R_t \setminus L_{t,t+s})\tag{\textcolor{blue}{CR}}\label{concat:3}.
\end{align}
See Figure~\ref{figure:3} for an illustration.
\end{lem}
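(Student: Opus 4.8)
The plan is to establish~(\ref{concat:0}) first by a uniqueness argument for the ODE, and then derive~(\ref{concat:1})--(\ref{concat:3}) as essentially bookkeeping consequences of~(\ref{concat:0}) together with the definitions of the left and right hulls. For~(\ref{concat:0}), fix $t \geq 0$ and $z \in \C \setminus L_{t+s}$; in particular $z \in \C \setminus L_t$, so $g_t(z)$ is well-defined, and moreover $g_t(z) \notin L_{t,t+s}$ because the blow-up time of $g_t(z)$ under the chain driven by $\lambda(t+\cdot)$ is exactly $T_z - t > s$ (this last point should be stated carefully: it follows from the definition of $T_z$ and the fact that $(g_{t+u}(z))_{0 \le u}$ and $(g_{t,t+u}(g_t(z)))_{0 \le u}$ satisfy the same ODE). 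Then I would define $\widetilde h_u(z) := g_{t,t+u}(g_t(z))$ for $u \in [0,s]$ and compute $\partial_u \widetilde h_u(z) = \tfrac{2}{g_{t,t+u}(g_t(z)) - \lambda(t+u)} = \tfrac{2}{\widetilde h_u(z) - \lambda(t+u)}$ with $\widetilde h_0(z) = g_t(z)$. On the other hand, the reparametrized tail $u \mapsto g_{t+u}(z)$ solves the same initial value problem with the same driver $\lambda(t+\cdot)$. By the uniqueness statement from Section~\ref{Section:2.1}, the two coincide, giving $g_{t+s}(z) = g_{t,t+s}(g_t(z))$, which is~(\ref{concat:0}). (One should also note the domains match: $z \in \C \setminus L_{t+s}$ iff $g_t(z) \in \C\setminus L_{t,t+s}$, by running the same argument in reverse.)

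Next, for~(\ref{concat:1}) I would argue by double inclusion. Take $z \in L_{t+s} \setminus L_t$; since $z \notin L_t$, $g_t(z)$ is defined and lies in $\C \setminus R_t$ by the definition of $R_t$. Since $T_z \le t+s$ but $T_z > t$, the point $g_t(z)$ blows up under the $\lambda(t+\cdot)$-chain at time $T_z - t \in (0,s]$, hence $g_t(z) \in L_{t,t+s}$; combined with $g_t(z) \notin R_t$ this gives $g_t(z) \in L_{t,t+s}\setminus R_t$. Conversely, if $w \in L_{t,t+s}\setminus R_t$, then $w \notin R_t$ means $w = g_t(z)$ for a unique $z \in \C\setminus L_t$, and $w \in L_{t,t+s}$ forces $z \in L_{t+s}$, so $z \in L_{t+s}\setminus L_t$ and $g_t(z) = w$; injectivity of $g_t$ on $\C \setminus L_t$ closes the loop. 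This proves~(\ref{concat:1}). Statement~(\ref{concat:2}) is then immediate: $L_{t+s} = L_t \sqcup (L_{t+s}\setminus L_t)$ using $L_t \subseteq L_{t+s}$, and applying $g_t^{-1}$ (well-defined on $\C\setminus R_t \supseteq L_{t,t+s}\setminus R_t$) to~(\ref{concat:1}).

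Finally, for~(\ref{concat:3}) I would unwind the definition of the right hull through the composition~(\ref{concat:0}). We have $R_{t+s} = \C \setminus g_{t+s}(\C\setminus L_{t+s}) = \C \setminus g_{t,t+s}\big(g_t(\C\setminus L_{t+s})\big)$. From the domain identity established with~(\ref{concat:0}), $g_t(\C\setminus L_{t+s}) = (\C\setminus R_t)\setminus L_{t,t+s} = \C\setminus(R_t \cup L_{t,t+s})$. So $R_{t+s} = \C \setminus g_{t,t+s}\big(\C\setminus(R_t\cup L_{t,t+s})\big)$. Now $g_{t,t+s}$ is the mapping-out function $\C\setminus L_{t,t+s}\to \C\setminus R_{t,t+s}$, and $\C\setminus(R_t\cup L_{t,t+s}) = (\C\setminus L_{t,t+s})\setminus R_t$; taking the image under the bijection $g_{t,t+s}$ and then the complement in $\C$ gives $R_{t+s} = R_{t,t+s} \cup g_{t,t+s}\big((\C\setminus L_{t,t+s})\cap R_t\big) = R_{t,t+s}\cup g_{t,t+s}(R_t\setminus L_{t,t+s})$, which is~(\ref{concat:3}). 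I expect the main obstacle to be purely notational: keeping straight the three chains ($g$, $g_{t,t+\cdot}$, and the shifted driver $\lambda(t+\cdot)$), and in particular pinning down the blow-up-time identity $T_z^{\lambda} = t + T_{g_t(z)}^{\lambda(t+\cdot)}$ rigorously, since every set identity hinges on it; the ODE computations themselves are one-line verifications of the kind already done for Lemma~\ref{Lem:4}.
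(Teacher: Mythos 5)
Your proposal is correct and follows the same overall strategy as the paper: establish~(\ref{concat:0}) by an ODE-uniqueness argument, then obtain~(\ref{concat:1})--(\ref{concat:3}) by unwinding definitions of the left and right hulls. A few differences in degree of completeness are worth noting. For~(\ref{concat:1}) you prove the equality by double inclusion, spelling out the converse direction (that every $w\in L_{t,t+s}\setminus R_t$ is the image of some $z\in L_{t+s}\setminus L_t$ via injectivity of $g_t$), whereas the paper's argument explicitly verifies only the forward inclusion $g_t(L_{t+s}\setminus L_t)\subseteq L_{t,t+s}\setminus R_t$; your version is the tighter one. For~(\ref{concat:3}) the paper simply states that it ``follows similarly,'' while you actually carry out the computation, using the domain identity $g_t(\C\setminus L_{t+s})=\C\setminus(R_t\cup L_{t,t+s})$ and the bijectivity of $g_{t,t+s}\colon\C\setminus L_{t,t+s}\to\C\setminus R_{t,t+s}$ to land on the stated formula; this is a genuine (and correct) expansion of the paper's elided step. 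Finally, you explicitly flag and use the blow-up-time identity $T^{\lambda}_z = t + T^{\lambda(t+\cdot)}_{g_t(z)}$ for $z\notin L_t$, which the paper leaves implicit when composing $g_{t,t+s}\circ g_t$; making it explicit is cleaner, since the domain matching in~(\ref{concat:0}) hinges on it.
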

\begin{figure}[ht]
\centering
\includegraphics[width=0.8\textwidth]{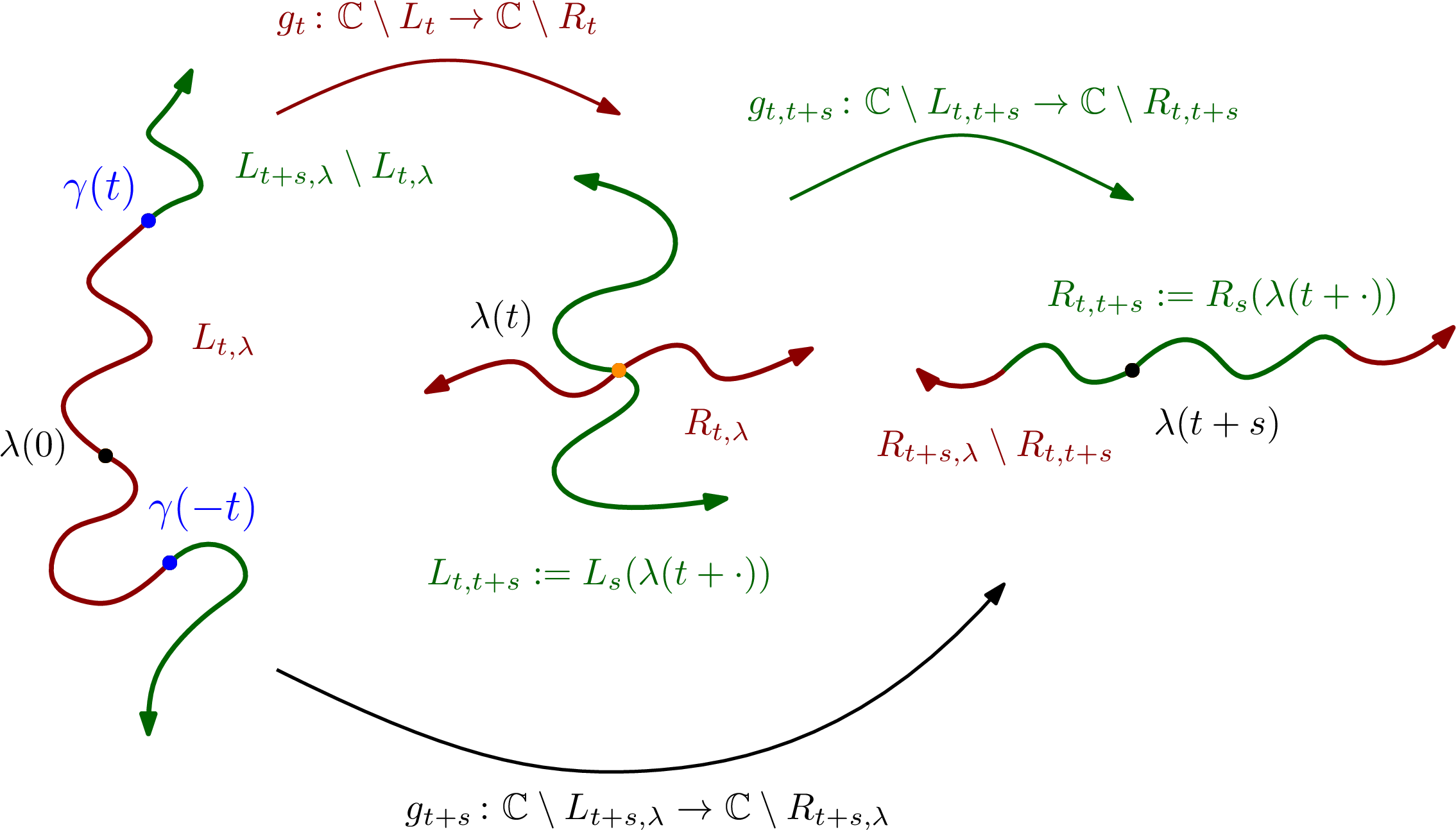}
\caption{Concatenation principle (Lemma~\ref{Lem:4}) for a complex-driven Loewner chain in the case where both the left as well as the right hulls are given by a two-sided simple curve. Observe that the tips of the curve $\gamma(t)$ and $\gamma(-t)$ (\textcolor{blue}{blue}), are mapped to $\lambda(t)$ (\textcolor{orange}{orange}) by Lemma~\ref{lem:regularity_mappingout}. Moreover, applying the mapping out function $g_{t,t+s}$ pushes the already grown red hull $R_{t}$ further out, breaking any kind of set-monotonicity for the right hull, while $R_{t,t+s}$ starts growing around $\lambda(t+s)$.}
\label{figure:3}
\end{figure}
\begin{proof}[Proof of Lemma~\ref{lem:5}]
Fix $t \geq 0$, we start with statement~(\ref{concat:0}) and consider $(\widetilde{g}_s)_{s \geq 0}$, where $\widetilde{g}_s:= g_{t,t+s} \circ g_t$. For any $z \in \mathbb C \setminus L_{t+s}$ by direct computation
\[
\partial_s \widetilde{g}_s(z) = \frac{2}{g_{t,t+s}(g_t(z)) - \lambda(t+s)} = \frac{2}{\widetilde{g}_s(z)-\lambda(t+s)}, \qquad \widetilde{g}_0(z) = g_t(z).
\]
Using the uniqueness of Loewner chains for fixed $z$, $(\widetilde{g}_s)_{s \geq 0}$ is the Loewner chain driven by $\lambda(t+\cdot)$ and so statement~(\ref{concat:0}) follows. For~(\ref{concat:1}) take some $z \in L_{t+s} \setminus L_t$, then $g_t(z)$ is well defined and by definition not in $R_t$. We now show that $g_t(z) \in L_{t,t+s}$ by proving that $t <T^{\lambda(t+\cdot)}_{g_t(z)} \leq t+s$, where the blow-up time is associated to the shifted driver $\lambda(t+\cdot)$. Since $z \in L_{t+s} \setminus L_t$, we have $t<T_z \leq t+s$ so that $(g_u(z))_{0 \leq u < T_z}$ solves the Loewner equation driven by $\lambda$. A direct computation gives
\[
\partial_s g_{t,t+s}(g_t(z)) = \partial_s g_{t+s}(z) = \frac{2}{g_{t+s}(z)-\lambda(t+s)} = \frac{2}{g_{t,t+s}(g_t(z))-\lambda(t+s)}, \qquad g_0(z) = z,
\]
where we used~(\ref{concat:0}) for the first and last equality.
By uniqueness of the solution of the Loewner equation, using $t < T^{\lambda}_z \leq t+s$ implies that $0 \leq T^{\lambda(t+\cdot)}_{g_t(z)} \leq s$ and hence~(\ref{concat:1}) follows. The next statement~(\ref{concat:2}) follows directly by applying the conformal map $g^{-1}_t$ to previous statement (\ref{concat:1}). The last property~(\ref{concat:3}) follows similarly and this finishes the proof.
\end{proof}
We make the following remarks about the concatenation principle.
\begin{itemize}
    \item Observe that both $L_{t,t+s}$ and $R_{t}$ start to grow from $\lambda(t)$. Consequently, for each $t,s \geq 0$ one has $\lambda(t) \in L_{t,t+s} \cap R_{t}$.
    \item After establishing the duality principle in Lemma~\ref{lem:6}, one can conclude that the set $R_{t}$ is also connected for each $t \geq 0$. Recall that a union of two connected sets is also connected if they at least share one common point (see Lemma~\ref{lem:Union_connected} for a reference). As a consequence, using $\lambda(t) \in L_{t,t+s} \cap R_{t}$, we conclude that for each $t,s \geq 0$, the closed set $L_{t,t+s} \cup R_{t}$ is also connected.
\end{itemize}

\textbf{Duality Property.} A natural question is whether the right hulls $(R_t)_{t \geq 0}$ admit an alternative characterization directly in terms of the driving function. In unpublished work~\cite{RS}, Rohde and Schramm observed that the left (resp. right) hulls admit an additional symmetric structure: The left (resp. right) hull is a rotation of the right (resp. left) hull generated by the Loewner equation with a time-reversed, rotated driver. In the probabilistic setting~\cite[Chapter~2.1]{gwynne2023loewner}, using a variant of planar Brownian motion as the driving function, an analogous statement was proved. To our knowledge, no published proof exists in the deterministic setting; we therefore provide one here. We begin by establishing the time-reversal property in the complex-driven case.
\begin{lem}[Time-reversal property]\label{lem:time_reversal}
    Let $\lambda \in C^{0}\left([0,\infty);\C\right)$ be continuous and let $(h_t)_{0 \leq t \leq T}$ be the solution to the backwards running Loewner equation given by 
    \begin{align*}
        \partial_t h_t(w) &= \frac{-2}{h_t(w)-\lambda(T-t)}, \qquad h_0(w) = w, \\
    \end{align*}
    where $w \in \C$ is fixed. Set $z:=h_T(w)$ and define $g_t(z):=h_{T-t}(w)$ for each $0 \leq t \leq T$. Then $(g_t(z))_{0 \leq t \leq T}$ solves the forward Loewner equation given by
    \begin{align*}
        \partial_t g_t(z) &= \frac{2}{g_t(z)-\lambda(t)}, \qquad g_0(z) = z, \\
    \end{align*}
    and for the final time $t=T$, one has $g_T(h_T(w))=w$.
\end{lem}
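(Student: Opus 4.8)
The plan is to verify directly that the time-reversed reparametrization $g_t(z):=h_{T-t}(w)$ solves the forward equation, and then to invoke uniqueness of solutions to the Loewner ODE to identify it with the forward Loewner chain. First I would record the structural observation that, since $(h_s)_{0\le s\le T}$ is assumed to solve the backward equation~(\ref{eq:backwards_loewner_eq}) on the whole interval $[0,T]$, its right-hand side is finite throughout, i.e. $h_s(w)\neq\lambda(T-s)$ for every $s\in[0,T]$. Substituting $s=T-t$, this says precisely that $g_t(z)-\lambda(t)\neq 0$ for all $t\in[0,T]$, so the forward ODE we are about to write down is non-singular on $[0,T]$.

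Next I would carry out the chain-rule computation. Since $g_t(z)=h_{T-t}(w)$, differentiating in $t$ and using that $h$ solves~(\ref{eq:backwards_loewner_eq}) gives
\[
\partial_t g_t(z) = -\big(\partial_s h_s(w)\big)\big|_{s=T-t} = -\,\frac{-2}{h_{T-t}(w)-\lambda\big(T-(T-t)\big)} = \frac{2}{g_t(z)-\lambda(t)},
\]
which is exactly the forward Loewner equation~(\ref{eq:Loewner_eq}). The initial value is $g_0(z)=h_T(w)=z$ by the definition of $z$, and evaluating at the terminal time yields $g_T(h_T(w))=g_T(z)=h_0(w)=w$, the last assertion of the lemma.

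Finally I would appeal to uniqueness. By the same Picard--Lindelöf argument recalled after Definition~\ref{Def:complex_loewner_chains} (the vector field $w\mapsto 2/(w-\lambda(t))$ is locally Lipschitz away from $\lambda(t)$, and $\lambda$ is continuous), the solution to the forward Loewner initial value problem started at $z$ is unique on any interval on which it stays away from the singular set. Combined with the two preceding paragraphs, this shows that $(g_t(z))_{0\le t\le T}$ is precisely the forward complex Loewner chain driven by $\lambda$ evaluated at $z$, and as a byproduct that $z\notin L_T(\lambda)$.

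The computation itself is routine; the only point requiring care is the bookkeeping in the first paragraph, namely using the hypothesis that the backward solution exists on all of $[0,T]$ to guarantee that the reparametrized curve never meets the singularity of the forward equation, so that uniqueness can legitimately be applied. I expect no genuine obstacle beyond this.
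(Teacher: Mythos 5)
Your proof is correct and follows essentially the same route as the paper's: the core of both is the chain-rule computation $\partial_t g_t(z) = -\partial_s h_s(w)\big|_{s=T-t} = 2/(g_t(z)-\lambda(t))$ together with the endpoint observations $g_0(z)=h_T(w)=z$ and $g_T(z)=h_0(w)=w$. Your additional remarks on non-singularity and Picard--Lindel\"of uniqueness are sound but go slightly beyond what the lemma states (which only asks that $g_t(z)$ solves the forward ODE); the paper leaves those points implicit.
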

\begin{proof}
Fix some $w \in \C$ such that $(h_t(w))_{0 \leq t \leq T}$ solves the backwards Loewner equation driven by $\lambda(T-\cdot)$ and consider $(g_t(z))_{0 \leq t \leq T}$ as given in the assumptions. We show that $g_t(z)$ satisfies the (forward) Loewner equation driven by $\lambda$. Indeed, by applying the chain rule one has
\[
\partial_t g_t(z) = \partial_t (h_{T-t}(w))= \frac{2}{h_{T-t}(w)-\lambda(T-(T-t))}  = \frac{2}{g_t(z)-\lambda(t)}, \qquad g_0(z)=h_T(w)=z,
\]
and so the first claim follows. The second claim follows immediately by observing $g_T(h_T(w))=g_T(z)=h_0(w)=w$. This finishes the proof.
\end{proof}
We are now ready to state and prove the duality property. After completing the proof, the translation, scaling, and reflection properties of the right hull follow directly from the corresponding results for the left hull.
\begin{lem}[Duality principle]\label{lem:6}
Let $\lambda \in C^{0}\left([0,\infty);\C\right)$ be continuous. Then for each $t \geq 0$, the corresponding left and right hulls satisfy the following duality principle.
\begin{align}
    L_{t} &= i R_{t}(-i\lambda(t-\cdot))\tag{\textcolor{blue}{DL}}\label{duality:1},\\
    R_{t} &= i L_{t}(-i\lambda(t-\cdot))\tag{\textcolor{blue}{DR}}\label{duality:2}. 
\end{align}
\end{lem}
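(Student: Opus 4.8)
The plan is to prove the duality principle by relating the forward Loewner chain to a backwards-running one via the time-reversal property of Lemma~\ref{lem:time_reversal}, and then conjugating by the rotation $w \mapsto -iw$ to convert the backwards equation into a forward one with the rotated, time-reversed driver. Concretely, I will first establish~(\ref{duality:1}) and then deduce~(\ref{duality:2}) from it together with the reflection properties; alternatively~(\ref{duality:2}) follows from~(\ref{duality:1}) by replacing $\lambda$ with $-i\lambda(t-\cdot)$ and simplifying, using that applying the time-reversal twice returns the original driver. So the real content is in~(\ref{duality:1}).

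To prove~(\ref{duality:1}), fix $t \geq 0$ and let $(g_s)_{0 \leq s \leq t}$ be the forward chain driven by $\lambda$, mapping $\C \setminus L_t \to \C \setminus R_t$. I want to show $\C \setminus L_t = i\,(\C \setminus R_t(-i\lambda(t-\cdot)))$, i.e. that a point $z$ lies outside $L_t$ exactly when $-iz$ lies outside $R_t(-i\lambda(t-\cdot))$. The key computation: given $z \notin L_t$, the trajectory $(g_s(z))_{0 \leq s \leq t}$ is well-defined; set $w := g_t(z)$, and define $h_u(\cdot)$ by running the chain backwards, $h_u := $ (the map sending $g_t(z)$ back to $g_{t-u}(z)$). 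By Lemma~\ref{lem:time_reversal}, applied in reverse, $(h_u(w))_{0 \leq u \leq t}$ solves the backwards Loewner equation with driver $\lambda(t - \cdot)$ evaluated appropriately — more precisely, one checks by the chain rule that $h_u(w) := g_{t-u}(z)$ satisfies $\partial_u h_u(w) = -2/(h_u(w) - \lambda(t-u))$ with $h_0(w) = w$. Now conjugate: set $\tilde h_u(\cdot) := -i\, h_u(i\,\cdot)$. A direct computation gives $\partial_u \tilde h_u(\zeta) = -i \cdot (-2)/(h_u(i\zeta) - \lambda(t-u)) = -2/(\tilde h_u(\zeta) + i\lambda(t-u)) = -2/(\tilde h_u(\zeta) - (-i\lambda(t-u)))$, so $(\tilde h_u)$ is the backwards chain driven by $-i\lambda(t-\cdot)$. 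Applying Lemma~\ref{lem:time_reversal} to $(\tilde h_u)$ shows that $u \mapsto \tilde h_{t-u}(\tilde h_t^{-1}(\cdot))$ is the \emph{forward} chain driven by $-i\lambda(t-\cdot)$; unwinding the definitions, this forward chain sends $-iz = -i g_0(z)$ into the co-domain, which is $\C \setminus R_t(-i\lambda(t-\cdot))$, and in particular $-iz$ is not in the left hull $L_t(-i\lambda(t-\cdot))$ while $-iw$ is not in the right hull. Identifying which point lands where is exactly the bookkeeping that yields $\C \setminus L_t(\lambda) = i(\C \setminus R_t(-i\lambda(t-\cdot)))$, and the reverse inclusion follows by running the same argument backwards (or by symmetry, applying the identity to the driver $-i\lambda(t-\cdot)$ and using that the double transformation is the identity).

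The main obstacle, I expect, is the careful bookkeeping of domains and co-domains under the time reversal: Lemma~\ref{lem:time_reversal} is stated pointwise (for a fixed $w$ one recovers a forward solution), so to get an equality of \emph{hulls} I need to track that a point escapes to the co-domain of the forward chain precisely when the reversed trajectory is defined for all of $[0,t]$, and that "defined for all of $[0,t]$" for the reversed chain translates into "not in the left hull" of the rotated driver. This requires knowing that the backwards flow and forward flow have matching maximal domains — essentially that $z \notin L_t$ iff the backward trajectory from $g_t(z)$ stays in the domain of the relevant maps — which should follow from the uniqueness and the fact that the co-domain of $g_t$ is $\C \setminus R_t$ by Definition~\ref{Def:Left/Right_hulls}, but it needs to be spelled out. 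A cleaner route to sidestep some of this is to argue directly at the level of the conformal maps: show $g_t \colon \C\setminus L_t \to \C\setminus R_t$ and the map $z \mapsto i\,\overline{\text{(mapping-out of }{-i\lambda(t-\cdot)})}$... — but the time-reversal argument above is the most transparent. Once~(\ref{duality:1}) is in hand,~(\ref{duality:2}) is immediate: substitute $\mu := -i\lambda(t-\cdot)$ into~(\ref{duality:1}), note $-i\mu(t-\cdot) = -i \cdot (-i\lambda(\cdot)) = -\lambda$, apply the reflection property~(\ref{Reflection:Origin}), and rearrange.
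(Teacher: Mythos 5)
Your approach is essentially the same as the paper's (time-reversal combined with the rotation $w \mapsto -iw$), though you assemble the pieces in a slightly different and, in principle, more direct order: you reverse the forward trajectory $g_s(z)$ to get a backwards chain and then rotate, whereas the paper first rotates an auxiliary forward chain driven by $-\lambda$ to produce a backwards chain and only then applies Lemma~\ref{lem:time_reversal}. Your route avoids the extra chain $\hat g$ driven by $-\lambda$ and the appeal to the reflection property inside the argument, so it would actually be cleaner — except that there is a sign error in your key computation. Starting from $\partial_u h_u = -2/(h_u(w)-\lambda(t-u))$ and setting $\tilde h_u(\zeta) := -i\,h_u(i\zeta)$, one gets
\[
\partial_u \tilde h_u(\zeta) = -i\cdot\frac{-2}{h_u(i\zeta)-\lambda(t-u)} = \frac{2i}{\,i\tilde h_u(\zeta)-\lambda(t-u)\,} = \frac{+2}{\tilde h_u(\zeta)-\bigl(-i\lambda(t-u)\bigr)},
\]
with a \emph{plus} sign, not minus. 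So $\tilde h$ is already the \emph{forward} chain driven by $-i\lambda(t-\cdot)$, starting at $\zeta = -i g_t(z)$ and ending at $\tilde h_t(\zeta) = -iz$, and no further application of Lemma~\ref{lem:time_reversal} is needed. This matters: if you take your stated (backwards) equation at face value and then apply Lemma~\ref{lem:time_reversal}, the lemma produces a forward chain driven by $-i\lambda(\cdot)$, not $-i\lambda(t-\cdot)$, and the subsequent bookkeeping of which point lands in which hull comes out wrong. The conclusion you want — $\zeta \notin L_t(-i\lambda(t-\cdot))$ and $g_t^{-i\lambda(t-\cdot)}(\zeta) = -iz$, hence $-iz \notin R_t(-i\lambda(t-\cdot))$ — follows immediately from the corrected computation with no extra reversal. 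One further small point: in deducing~(\ref{duality:2}) you invoke the reflection property~(\ref{Reflection:Origin}) for \emph{right} hulls, but at this stage of the paper that property has been proved only for left hulls (it is precisely the duality principle that then yields it for right hulls). The paper sidesteps the circularity by writing $iL_t(-i\lambda(t-\cdot)) = -iL_t(i\lambda(t-\cdot))$ using reflection for \emph{left} hulls and then applying~(\ref{duality:1}); you should do the same.
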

\begin{proof}
We first prove the first duality principle~(\ref{duality:1}). By the definition of the right hull the following statements are equivalent.
\begin{align}
    z \in \C \setminus iR_{t}(-i\lambda(t-\cdot)) \qquad \iff \qquad& -iz \in g_t\left(\C \setminus L_{t}(-i\lambda(t-\cdot))\right)\\
   \qquad\iff \qquad& -iz = g^{-i\lambda(t-\cdot)}_t(\zeta) \qquad \text{ for some }\, \zeta \in \mathbb{C} \setminus L_{t}(-i\lambda(t-\cdot))\label{eq:duality_proof},
\end{align}
where $g^{-i\lambda(t-\cdot)}_t$ denotes the solution of the Loewner equation driven by $-i\lambda(t-\cdot)$ up to time $t$. Fix a point $z \in \C \setminus L_{t}$ and by above, it suffices to show that $z$ satisfies the right hand side of~(\ref{eq:duality_proof}). The rest of the proof is dedicated to this task and the idea is to apply a time-reversal argument to get the $-i\lambda(t-\cdot)$ driving function while accounting for the $-iz$ term.

We start by establishing the right driving function. Consider any $w \in \C$, which we choose later, such that $-iw \in \C \setminus L_{t}(-\lambda)$. Denote by $h_u(w):=i\hat{g}_u(-iw)$, where $(\hat{g}(-iw))_{0 \leq u \leq t}$ solves the Loewner equation driven by $-\lambda$. Then since $-iw \in \C \setminus L_{t}(-\lambda)$, one has
\[
\partial_u h_u(w) =i\,\partial_u \hat{g}_u(-iw) = \frac{2i}{\hat{g}_u(-iw)+\lambda(u)} = \frac{-2}{h_u(w) + i\lambda(u)},
\]
for any $u \in [0,t]$. Denoting by $\hat{\lambda}(u):=-i\lambda(t-u)$, then the above can be rewritten as follows
\[
\partial_u h_u(w) = \frac{-2}{h_u(w)-\hat{\lambda}(t-u)}, \qquad h_0(w)=i\hat{g}_0(-iw)=w.
\]
Consequently, $(h_u(w))_{0 \leq u \leq t}$ solves the backwards Loewner equation driven by $\hat{\lambda}(t-\cdot)$ up to time $t$ and especially $h_t(w) \in \C \setminus L_{t}(-i\lambda(t-\cdot))$. We are now ready to apply the time reversal argument by choosing a suitable point $w$.

Take $w=-iz$, since $z \in \C \setminus L_{t}$ by the reflection property of the left hull~(\ref{Reflection:Origin}) we have $-iw=-z \in \C \setminus L_{t}(-\lambda)$ and by above $(h_u(w))_{0 \leq u \leq t}$ solves the backwards running Loewner equation driven by $\hat{\lambda}(t-\cdot)$. Take $\zeta:=h_t(w)$ and define $k_u(\zeta):=h_{t-u}(w)$ for each $0 \leq u \leq t$. Then by applying Lemma~\ref{lem:time_reversal} we know that $(k_u(\zeta))_{0 \leq u \leq t}$ solves the forward running Loewner equation given by
\[
\partial_u k_u(\zeta) = \frac{2}{k_u(\zeta)-\hat{\lambda}(u)} = \frac{2}{k_u(\zeta)-\left(-i\lambda(t-u)\right)}, \qquad k_0(\zeta) = \zeta,
\]
and $k_t(h_t(w))=w=-iz$. By the uniqueness of the solution of the Loewner equation for fixed $z$, $(k_t(\zeta))_{0 \leq u \leq t}$ is the unique solution for the forwards running Loewner equation driven by $-i\lambda(t-\cdot)$. Also $\zeta=h_t(w) \in \C \setminus L_{t}(-i\lambda(t-\cdot))$ and since $k_t(\zeta)=-iz$, we conclude that $z$ satisfies the right hand side of~(\ref{eq:duality_proof}). This finishes the proof of the first statement~(\ref{duality:1}). For the second statement~(\ref{duality:2}), observe that for each $t \geq 0$ we have
\[
i L_t(-i\lambda(t-\cdot)) = -iL_t(i\lambda(t-\cdot)) = -i \left(i R_t\left(-i(i\lambda(t-(t-\cdot)\right) \right) = R_t(\lambda),
\]
where we used~(\ref{Reflection:Origin}) and (\ref{duality:1}) in this order. This finishes the proof.
\end{proof}

\textbf{Upshot.} We advice the reader to consult Figure~\ref{figure:3} for an illustration. Fix some $0 \leq t \leq t+s \leq T$, using the concatenation principle of left hulls~(\ref{concat:1}) one has
\[
g_t(L_{t+s} \setminus L_{t}) = L_{t,t+s} \setminus R_{t}.
\]
The conformal map $g_t$ maps $L_{t+s} \setminus L_{t}$ on $L_{t,t+s}$ (\textcolor{Green}{green}) while growing the residual set right hull $R_{t}$ (\textcolor{BrickRed}{red}). While $L_{t,t+s}$ grows by running the Loewner equation driven by $\lambda(t+\cdot)$, the right hull grows by running another forward Loewner equation driven by $-i\lambda(t-\cdot)$ and subsequently rotating the hull. Compared to the real-driven chordal case, our mapping-out function $g_t$ does not have any reference domain such as the upper half-plane. Instead, for each $t \geq 0$, the conformal map $g_t$ cuts into the complex plane by growing the connected and closed set $R_t$ and mapping the increment of the left hull to $L_{t,t+s}$, which both start at $\lambda(t)$. \newline
\section{Two-sided curves and complex drivers}\label{Section:3}

In this section, we investigate the relationship between two-sided curves and complex-valued drivers in the deterministic setting. Such connections were previously discussed in~\cite{tran2017loewner,lind2022phase}. To provide context for our definitions and objectives, we first briefly revisit the notion of a driver generating a curve in the real-driven chordal case. We refer the reader to \cite[Chapter~4.4]{lawler2008conformally} for a detailed introduction.

Let $\gamma \, \colon \, [0,\infty) \to \overline{\mathbb H}$ be a curve emanating from the real line and denote by $\gamma_t:=\gamma[0,t]$ its segment up to time $t$. Further denote by $C_{\infty}({\mathbb H} \setminus \gamma_t)$ the unbounded connected component of the complement of the curve segment up to time $t$ then the following definition connects the geometry of chordal hulls $(K_t)_{t \geq 0}$ with a given curve.
\begin{defn}[Driver generates curve]\label{def:generated_by_curve}
    The driving function $\lambda$ of a Loewner chain $(g_t)_{t \geq 0}$ with associated hulls $(K_t)_{t \geq 0}$ generates a curve if there exists a curve $\gamma$ such that $\mathbb H \setminus K_t = C_{\infty}({\mathbb H} \setminus \gamma_t)$ for each $t \geq 0$.
\end{defn}
We emphasize that the above definition combines the existence of a geometric object (a curve) with a topological property (the unbounded connected component). In this section, we are interested in possible generalizations of Definition~\ref{def:generated_by_curve} to the complex-driven case. Let us make the following remarks as guidelines for our work.
\begin{itemize}
    \item It seems difficult to state a generalization of Definition~\ref{def:generated_by_curve} in the case of complex-driven Schramm-Loewner evolutions. We refer to \cite[Remark~1.11]{gwynne2023loewner} for a discussion on this statement. In the probabilistic and deterministic settings, the issue can be summarized as follows.
    \item Reviewing the work of \cite{tran2017loewner}, we later see in this section that for complex-driven Loewner chains the generalization of the geometric object in Definition~\ref{def:generated_by_curve} (the curve) is given by two-sided curve $\gamma : (-\infty,\infty) \to \mathbb{C}$.
     However, identifying the appropriate topological property that connects such two-sided curves with the created left hulls $(L_t)_{t \geq 0}$ proves to be challenging. The difficulty arises from two distinct obstacles.
    \begin{enumerate}
        \item The main obstacle lies in the existence of left hulls generated by two-sided curves with multiply connected complement, where each complementary component satisfies the following property: Each complementary component may be absorbed into the left hull at a strictly later time than the moment it was first disconnected from infinity by the two-sided curve. In other words, a left hull generated by a complex-valued driving function may disconnect open sets from infinity for a positive duration of time.

        For complex Schramm-Loewner evolutions this is almost surely the case, as proven in~\cite[Theorem~1.10]{gwynne2023loewner}. A deterministic example is given in~\cite[Theorem~3]{lind2022phase}. This gives the left hull a "non-locally growing" flavor independent of the position of the tip of the curve, which makes it difficult to relate the complement of left hulls with a specific time segment of the two-sided curve.
        \item Secondly, there might exist a continuous driver which generates $\gamma_s = \gamma_t$ for some $0 \leq s < t \leq T$ where we recall that $\gamma_t$ denotes $\gamma[0,t]$.
    \end{enumerate}
\end{itemize}
Both points make it difficult to fix a single topological property reminiscent of the one in Definition~\ref{def:generated_by_curve} that connects the left hulls $(L_t)_{t \geq 0}$ with the geometric object of a two-sided curve. Motivated by this issue, we are interested in the following questions and prove the following results.
\begin{itemize}
    \item In the real-driven chordal case, it is a fact that each driver $\lambda \in C^1\left([0,T];\mathbb R\right)$ generates a curve in the sense of Definition~\ref{def:generated_by_curve} that is simple. As already observed in~\cite[Theorem~3]{lind2022phase}, this is not true in the complex-driven case. In Section~\ref{Section:3.2}, we prove a sufficient condition on the left and right hulls, to ensure that a driver $\lambda \in C^1\left([0,T];\mathbb C\right)$ creates a left hull given by a simple two-sided curve.
    \item One of the main advantages of working with a driving function that generates a curve in the sense of Definition~\ref{def:generated_by_curve}, is that it is easy to study various properties of the associated hulls such as phase transitions, Hausdorff dimension of the underlying curve and the following equivalent condition to check whether or not the curve is simple.
    \begin{lem}{\cite[Lemma~4.34]{lawler2008conformally}}\label{lem:generated_by_curve}
    Let $\lambda$ be a driving function of a Loewner chain $(g_t)_{t \geq 0}$ that generates a curve $\gamma$. Then the following statements are equivalent.
        \begin{enumerate}
            \item The curve $\gamma$ is simple.
            \item For each $t \geq 0$, one has $g_t\left(\gamma\big(t+s\big) \, \colon \, s \geq 0\right) \cap \mathbb R = \{\lambda(t)\}$.
        \end{enumerate}
    \end{lem}
    Although a “generated by a curve” statement has not yet been established in the complex-driven case we are interested in whether an analogous statement can be obtained in the simplest situation, namely when $\lambda$ generates a left hull $(L_t)_{t \geq 0}$, which is given by a two-sided curve. Recall that by Definition~\ref{Def:Left/Right_hulls}, that the mapping out function $g_t \colon \mathbb C\setminus L_t \to \mathbb C \setminus R_t$ has no reference domain reminiscent of the upper half-plane in the chordal real-driven case. This together with the reflection property~(\ref{reflection:Real}) suggests that any potential generalizations of the second condition above should be of the form $L_{t,t+s} \cap R_t = \{\lambda(t)\}$ for each $0 \leq t \leq t+s < \infty$. While Section~\ref{Section:3.2} indeed establishes that the above condition is sufficient for a driving function $\lambda \in C^{1}\!\left([0,T];\mathbb{C}\right)$ to create a left hull given by a simple two-sided curve, Section~\ref{Section:3.1} provides a counterexample showing that the converse does not hold. Namely, in Example~\ref{ex:counterexample}, we construct a driving function $\lambda \in C^{0}\!\left([0,2];\mathbb{C}\right)$, smooth except at time $1$, such that the generated curve is simple for all $0 \leq t \leq 2$. However, for $0 \leq s \leq 1$, we have 
    \[
    L_{1,1+s} \cap R_1 = \ell_s = \bigg\{\frac{3\sqrt{u}}{2} \, e^{i\pi/4 u} \, \colon \,0 \leq u \leq s\bigg\}.
    \] 
    Consequently, even in the simplest case where the left hull is a two-sided curve, the condition is no longer necessary. This motivates us to introduce an alternative necessary and sufficient condition in Lemma~\ref{lem:characterization_simple}.
\end{itemize}
We now introduce the relevant definitions, building on the work of~\cite{tran2017loewner}. In the following, we always consider $a,b \geq 0$ and denote by $a \wedge b:=\max(a,b)$. We start with the following definition.
\begin{defn}\label{Def:frontier time}
    We say that a driving function $\lambda$ is \emph{top frontier} (resp. \emph{bottom frontier}) \emph{at time $t \geq 0$}, if the following limit from above (resp. below) exists:
    \[
    \lim_{\varepsilon \downarrow 0} g^{-1}_t(i\varepsilon + \lambda(t)), \qquad \text{ resp. } \qquad  \lim_{\varepsilon \uparrow 0}, g^{-1}_t(i\varepsilon+\lambda(t)).
    \]
    We say that a driving function is \emph{frontier up to times $(a,b)$}, if it is top frontier for all $0 \leq t \leq b$ and bottom frontier for all $0 \leq t \leq a$. We say that a driving function is \emph{frontier up to time $t$}, if it is frontier up to times $(t,t)$. We say that a driving function is \emph{frontier}, if it is frontier up to time $t$ for all times $t \geq 0$.
\end{defn}
We are interested in the points that are just added at a given time instant $t \geq 0$, which we are going to denote by pioneer points.
\begin{defn}[Pioneer point]\label{Def:Pioneer Point}
    We say that a point $z \in \mathbb C$ is a \emph{pioneer point at time $t \geq 0$ for the driving function $\lambda$}, if $z \in L_{t} \setminus \cup_{0 \leq s < t} L_{s}$.
\end{defn}
We are usually omitting the references to the driving function, if it is clear from the given context. Next, we want to establish the connection between the frontier limits in Definition~\ref{Def:frontier time} and two-sided curves.
\begin{defn}[Creates a two-sided curve]\label{Def:Two-sidedcurve}
    We say that $\lambda$ creates a \emph{two-sided curve up to times $(a,b)$}, if $\lambda$ is frontier up to times $(a,b)$ and there exists a curve $\gamma^{+} \colon [0,b] \to \mathbb C$ (resp. $\gamma^{-} \colon [-a,0] \to \mathbb C$) satisfying
    \[
    \gamma^{+}(t) = \lim_{\epsilon \downarrow 0} g^{-1}_t(i\epsilon+\lambda(t)), \qquad \text{resp.} \qquad \gamma^{-}(-t)=\lim_{\epsilon \uparrow 0} g^{-1}_t(i\epsilon+\lambda(t)),
    \]
    for each $0\leq t \leq b$ (resp. $0 \leq t \leq a$). We say that $\lambda$ \emph{creates a two-sided curve up to time $t \geq 0$}, if $\lambda$ creates a two-sided curve up to times $(t,t)$. We say that $\lambda$ creates a two-sided curve, if it does for all $t \geq 0$. Lastly, we say that \emph{$\lambda$ creates a two-sided curve up to times $(a,b)$ that is pioneer} if for all $-a\leq t \leq b$, the point $\gamma(t)$ is pioneer at time $|t|$.
\end{defn}
We are now prepared to relate the left hulls $(L_t)_{t \geq 0}$ directly to a generated two-sided curve.

\begin{defn}[Left hull is given by a curve]\label{Def:Two-sided-pioneercurve}
    We say that a left hull is \emph{given by a two-sided curve up to times $(a,b)$}, if $\lambda$ creates a two-sides curve up to times $(a,b)$ such that $L_{a \wedge b}=\gamma[-a,b]$. We say that a left hull is \emph{given by a two-sided curve up to time $t$}, if $t=a=b$. We say that a left hull is \emph{given by a two-sided} curve if it is given by a two-sided curve for all $t \geq 0$.
\end{defn}

In the chordal real case, if $\lambda \colon [0,\infty) \to \mathbb R$ satisfies $\|\lambda\|_{1/2} < 4$, where $\|\cdot\|_{1/2}$ denotes the Hölder-1/2 semi-norm, then there exists a quasiarc $\gamma \colon [0,\infty) \to \overline{\mathbb H}$ such that for each $t \geq 0$, one has $K_{t} = \gamma[0,t]$, where we refer to~\cite{rohde2018loewner} for an elementary proof. The following proposition is the one of the main statements of~\cite{tran2017loewner}, and can be seen as a direct generalization of the above fact to complex-driven Loewner evolutions, relating the Hölder-1/2 class to two-sided quasiarcs.
\begin{prop}\label{Thm:Tran's Theorem}
Let $\lambda \in C^{0}\left([0,\infty);\mathbb C\right)$ be a continuous driving function. There exists some $\sigma > 0$, such that if for some $T \geq 0$ one has $\|\lambda \big \vert_{[0,T]}\|_{1/2} \leq \sigma$, then the left hull is given by a two-sided quasiarc $\gamma \colon [-T,T] \to \C$, so that for each $t \in [0,T]$ one has $L_t=\gamma[-t,t]$ together with
\[
\gamma(s) = \lim_{\epsilon \downarrow 0} g^{-1}_s(i\epsilon+\lambda(s)) \qquad \text{ and } \qquad \gamma(-s) = \lim_{\epsilon \uparrow 0} g^{-1}_s(i\epsilon+\lambda(s)),
\]
where $0 \leq s \leq t$.
\end{prop}
\begin{proof}
    See~\cite[Thm~1.2]{tran2017loewner}.
\end{proof}
We give some quick remarks on this result.
\begin{itemize}
    \item Using the above definitions, one can reformulate Proposition~\ref{Thm:Tran's Theorem} as follows. If there exists some $T>0$ such that $\|\lambda \big \vert_{[0,T]}\|_{1/2} < \sigma$, then up to time $T$ the left hull is given by a two-sided curve in the sense of Definition~\ref{Def:Two-sided-pioneercurve}, which is further a quasiarc.
    \item The constant $\sigma$ is not explicitly known. Tran's proof provides a lower bound given by $\sigma > 1/3$. Using the reflection property of left hulls~(\ref{reflection:Real}) together with the theory of real-driven chordal driving functions~\cite[Theorem~2]{lind2005sharp} gives the range
    \[
    1/3 < \sigma < 4.
    \] 
    \item By considering the family of driving functions $\lambda_c(t)=c\sqrt{1-t}$ where $c \in \C$, it was shown in~\cite[Question~2]{lind2022phase} that $\sigma < 3.723$. In Lemma~\ref{lem:optimal_sigma} we are going to derive a sharper upper bound, which is given by $\sigma < 3.544$. We believe this to be non-optimal.
\end{itemize}

\subsection{Hulls driven by \texorpdfstring{$C^0$}{C^0}-drivers}\label{Section:3.1}

In this section, we work within the class of $C^0$-drivers under the additional assumption that the driver creates a left hull given by a curve. We are interested in possible generalizations of Lemma~\ref{lem:generated_by_curve}. In Example~\ref{ex:counterexample}, we prove that the condition $L_{t,t+s} \cap R_t = \{\lambda(t)\}$ for $0 \leq t \leq t+s < \infty$ is not necessarily fulfilled, even if the driving function creates a simple curve. Motivated by this example, we continue to discuss an alternative necessary and sufficient condition in Lemma~\ref{lem:characterization_simple}.

\textbf{Real driver and straight line segments.} To construct Example~\ref{ex:counterexample}, we first recall some advanced examples of real-driven chordal drivers. Namely, we review driving functions that create straight line segments emanating from the origin as chordal hulls. Fix $a \geq 0$ and define the driving function $\xi_{a}(t):=a\sqrt{t}$. In~\cite[Section~4.1]{kager2004exact}, it was shown that for each $t \geq 0$, the driver $\xi_a$ creates the following chordal hull
\begin{equation}\label{eq:angle_hulls}
    K_{t}(\xi_a(t)) = \ell_t(\Theta(a)), \qquad \text{ where } \Theta(a)= \frac{\pi}{2}\left(1-\frac{a}{\sqrt{16+a^2}}\right),
\end{equation}
where $\ell_t(\Theta(a)) \subseteq \{r e^{i \Theta(a)} \, \colon \, r \geq 0 \}$ is a line segment emanating from $\ell_0(\Theta_a) = \{0\}$. In other words, the chordal hull is given by a straight-line segment emanating from the origin with initial angle $\Theta(a)$. If we now switch to the complex-driven case, since $\xi_a$ is real valued, by the reflection property~(\ref{reflection:Real}), we have
\[
L_{t}(\xi_a) = \ell_t(\Theta(a)) \cup \ell^{\ast}_t(\Theta(a)).
\]
Thus, in the complex-driven case, the real driver $\xi_a$ grows two line segments emanating from the origin. The two lines are reflections of each other across the real axis. We are now ready to construct our example and advise the reader to consult Figure~\ref{fig:counterexample} for an illustration.
\begin{figure}[h!]\label{figure:counterexample}
    \centering
    \includegraphics[width=0.9\textwidth]{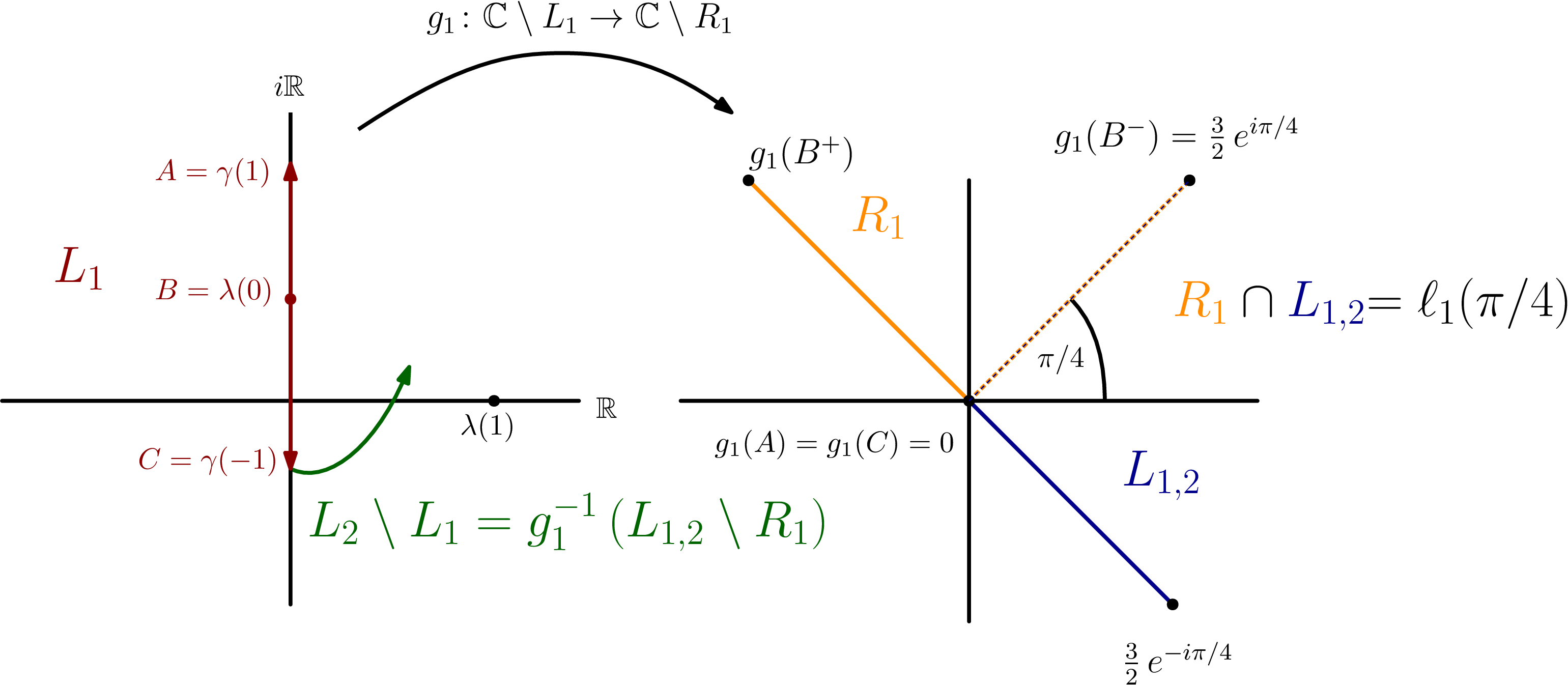}
    \caption{The total left hull given $L_2$ is given by a simple curve. The hull can be written as a disjoint union of a two-sided curve $L_1$ (\textcolor{red}{darkred}) and a negative time curve $L_2 \setminus L_1$ (\textcolor{Green}{green}). However, the intersection of $R_1$ (\textcolor{orange}{orange}) and $L_{1,2}$ (\textcolor{Blue}{blue}) is given by the line segment $\ell_1(\pi/4)$ (dashed \textcolor{orange}{orange} and \textcolor{Blue}{blue}). The prime ends of $B$ under $g_1$ from the right (resp. left) are denoted by $B^{+}$ (resp. $B^{-}$).}
    \label{fig:counterexample}
\end{figure}
\begin{example}\label{ex:counterexample} Consider the driving function $\lambda \in C^{0}\left([0,2];\mathbb C\right)$ given by 
\[
\lambda(t) = 
\begin{cases}
    \frac{4i}{\sqrt{3}}\sqrt{1-t} \quad& \text{ if } 0 \leq t \leq 1, \\
    \frac{4}{\sqrt{3}}\sqrt{t-1} \quad& \text{ if } 1 \leq t \leq 2.
\end{cases}
\]
For $t=1$ we first show that for each $0 \leq s \leq 1$, one has $L_{1,1+s} \cap R_1 = \ell_s(\pi/4)$, where $\ell_s(\pi/4) \subseteq \{r e^{i\pi/4} \, \colon \, r \geq 0\}$ is some finite length line segment that emanates from the origin with initial angle $\pi/4$.

For each $0 \leq s \leq 1$ by a simple algebraic manipulation we have
\[
L_{1,1+s} = L_s(\lambda(1+\cdot)) = L_s\left(\frac{4}{\sqrt{3}} \sqrt{\cdot}\right).
\]
By the duality principle of the right hulls~(\ref{duality:2}), the right hull is given by
\begin{equation}\label{eq:right_hull_example}
    R_{1} = i L_{1}(-i\lambda(1-\cdot)) = iL_{1}\left(\frac{4}{\sqrt{3}}\sqrt{\cdot}\right).
\end{equation}
Since we are interested in the intersection $L_{1,1+s} \cap R_1$, it now suffices to determine $L_s\left(\frac{4}{\sqrt{3}} \sqrt{\cdot}\right)$ for each $0 \leq s \leq 1$. Using~(\ref{eq:angle_hulls}) and its remarks, for each $0 \leq s \leq 1$ we have
\[
L_s\left(\frac{4}{\sqrt{3}} \sqrt{\cdot}\right) = \ell_s(\pi/4) \cup \ell^{\ast}_s(\pi/4),
\]
as a result $L_s$ is given by a finite-length line segment emanating from the origin with initial angle given by $\pi/4$ and its reflection along the real axis. Since the right hull $R_1$ is given by the same hull rotated by the imaginary unit $i$, we conclude that
\[
L_{1,s} \cap R_{1} = \ell_s(\pi/4),
\]
for each $0 \leq s \leq 1$. It is actually possible to give the explicit form of $\ell_s(\pi/4)$. Performing some algebraic manipulation of equation~(35) given in \cite[Section~(4.1)]{kager2004exact}, the line segment can be explicitly calculated as follows
\[
\ell_s(\pi/4) = \bigg\{\frac{3}{2} \sqrt{u} \, e^{i \pi/4} \, \colon \, 0 \leq u \leq s \bigg\}.
\]
So that at the final time $s=1$ the top point of this line segment is given by $3/2 \,e^{i\pi/4}$. It is left to argue that $\lambda$ creates a left hull given by a two-sided pioneer curve up to times $(2,1)$ in the sense of Definition~\ref{Def:Two-sided-pioneercurve}.

 We first prove that the left hull is given by a two-sided pioneer curve up to time $1$. 
 \begin{itemize}
     \item By similar means as above, for $0 \leq t \leq t+s < 1$ it can be proven that $L_{t,t+s}=L_s(4i/\sqrt{3}\,\sqrt{1-t+\cdot})$ such that by the remarks of Definition~\ref{Def:Left/Right_hulls} the left hull is entirely contained on the imaginary axis.
     \item Similarly it can be proven that for each $0 \leq t <1$, the right hull $R_t$ is entirely contained on the real-line, such that $L_{t,t+s} \cap R_t = \{\lambda(t)\}$ for each $0 \leq t \leq t+s <1$. As a result, using Corollary~\ref{cor:1} (note that we did not use the theory of $C^0$-drivers to prove this corollary or alternatively use~\cite[Chapter~4.1]{kager2004exact} and the reflection property~(\ref{reflection:Real})), the driver $\lambda$ creates a left hull that is given by a two-sided pioneer curve for all times $0 \leq t \leq t+s < 1$.
     \item We can strengthen this result to $t+s=1$, by using the fact that $g_1$ extends continuously to the tips of the two-sided curve and applying the concatenation principle~(\ref{concat:1}). As a result $L_1 = \gamma[-1,1]$ is a two-sided pioneer curve with starting point $B:=\gamma(0)= \{4i/\sqrt{3}\}$ and tips $A:=\gamma(1)$ and $C:=\gamma(-1)$ as presented in Figure~\ref{fig:counterexample}.
 \end{itemize}
 Next, to complete the proof that the left hull is given by a pioneer two-sided curve up to times $(2,1)$. We need to show that the increment of the left hull between time one and two is purely given by its negative times curve consisting of the bottom limits of the given driving function.
 \begin{itemize}
     \item By the concatenation principle of the left hulls~(\ref{concat:1}), we know that $L_{2} \setminus L_1 = g^{-1}_1(L_{1,2} \setminus R_1)$, where we already established that $L_{1,2}=L_1\left(4/\sqrt{3} \sqrt{\cdot}\right)$ and $L_{1,2} \setminus R_1 = \ell_1(\pi/4)$. Using the real-driven chordal case in combination with the reflection property~(\ref{reflection:Real}), we know that $L_{1,2}$ is given by a two-sided curve in the sense of Definition~\ref{Def:Two-sided-pioneercurve}, where only the negative time part is considered in $L_{1,2} \setminus R_1$. 
     \item Since $g^{-1}_1 \, \colon \mathbb C \setminus R_1 \to \mathbb C \setminus L_1$ is conformal, the preimage of the punctured line segment must be connected and contained in the interior of $\mathbb C \setminus L_1$. As a result, since $g_1 \, \colon \, \mathbb C \setminus L_1 \to \mathbb C \setminus R_1$ also extends continuous to the tips of $L_1$, we either glue the preimage of the line segment $\ell^{\ast}_1(\pi/4)$ on the tip $A$ or $C$. A more elaborate argument involving an argument presented in~\cite[Equation~(12)]{kennedy2007fast} shows that it is mapped to $C$ and it is left to prove that each point $\gamma(-(1+t))$, where $0 \leq t \leq 1$, is given by a bottom frontier limit of the driver.
     \item Using the concatenation principle of mapping out functions~(\ref{concat:0}) and the fact that $L_{1,2}$ is generated by a curve, for each $0 \leq t \leq 1$ we have
     \[
     \gamma(-(1+t))=g^{-1}_{1}(\gamma^{\lambda(1+\cdot)}(-t)) = g^{-1}_1 (\lim_{\epsilon \uparrow 0} g^{-1}_{1,1+t}(i \epsilon+\lambda(1+t)) = \lim_{\epsilon \uparrow 0} g^{-1}_{1+t}(i\epsilon+\lambda(1+t)),
     \]
     where $\gamma^{\lambda(1+\cdot)}$ is the negative time curve associated to the line segment $\ell^{\ast}_{1}(\pi/4)=L_{1,2} \setminus R_1$. This proves the example.
\end{itemize}
\end{example}
To summarize: By the previous example, if a continuous driver $\lambda$ creates a pioneer two-sided curve, then the condition $L_{t,t+s} \cap R_t = \{\lambda(t)\}$ for $0 \leq t \leq t+s \leq T$ cannot be a necessary condition for the curve to be simple. 
The main issue is that the sets $L_{t,t+s}$ and $R_t$ overlap, so that the open set $\mathbb C \setminus L_{t,t+s} \cup R_t$ stays connected. The next lemma takes this into account and states the correct alternative necessary condition.
\begin{lem}\label{lem:characterization_simple}
    Let $\lambda \in C^{0}\left([0,\infty),\mathbb C\right)$ be continuous. Let $T>0$ such that $\lambda \big \vert_{[0,T]}$ creates a left hull that is given by a two-sided curve in the sense of Definition~\ref{Def:Two-sided-pioneercurve}. Consider the following statements:
    \begin{enumerate}
        \item\label{lem:characterization_simple_1} The two-sided curve $\gamma \colon [-T,T] \to \C$ is simple.
        \item\label{lem:characterization_simple_2} For each $0 \leq t \leq T$, both $\gamma(t)$ and $\gamma(-t)$ are pioneer points.
        \item\label{lem:characterization_simple_3} For each $0 \leq t \leq t+s \leq T$, the open set $\C \setminus \left(L_{t,t+s} \cup R_{t}\right)$ is connected.
    \end{enumerate}
    Then we have $(\ref{lem:characterization_simple_1}) \iff (\ref{lem:characterization_simple_2}) \implies (\ref{lem:characterization_simple_3})$.
\end{lem}
\begin{proof}
     We first show that~(\ref{lem:characterization_simple_1}) and~(\ref{lem:characterization_simple_2}) are equivalent. Recall that for any $- T \leq t \leq T$, the point $\gamma(t)$ is a pioneer if and only if $\gamma(t) \in L_{t} \setminus \cup_{0 \leq s < t} L_s$. Since by assumption the left hull is given by a two-sided curve, this is exactly the case if and only if $\gamma(t) \neq \gamma(\pm s)$ for each $0 \leq s<t$. As a result, $\gamma$ is simple if and only if for each $0 \leq t \leq T$ the points $\gamma(\pm t)$ are pioneer, which shows the first equivalence.

     Next we prove that~(\ref{lem:characterization_simple_1}) implies~(\ref{lem:characterization_simple_3}), let $\gamma \, \colon \, [-T,T] \to \mathbb C$ be simple and fix $0 \leq t \leq t+s \leq T$. Using the fact that the complement of a simple curve is connected, by applying the concatenation principle of the left hull~(\ref{concat:2}) we use the conformal map
     \[
     g_t \, \colon \, \mathbb C \setminus L_{t+s} \to \mathbb C \setminus \left(L_{t,t+s} \cup R_t\right),
     \]
     to conclude that $\mathbb C \setminus \left(L_{t,t+s} \cup R_t\right)$ is connected.
\end{proof}

In the following remark, we discuss why~(\ref{lem:characterization_simple_3}) cannot be taken as a sufficient condition, even if we add a strict growth property to both the positive (resp. negative) time parts.
\begin{itemize}
    \item\textbf{Why is condition~(\ref{lem:characterization_simple_3}) not sufficient?} Without a strict growth property, a two-sided curve could spend a strictly positive amount of time in a past segment, satisfying~(\ref{lem:characterization_simple_3}) while being non-simple. A simple example of such a curve would be a two-sided line segment that halts for a strictly positive amount of time in at least its positive or negative time part. Currently, it is not clear if such a curve can be created using complex-driven Loewner chains and we refer to Question~1 in Section~\ref{Sec:Section_6} for a discussion.

    Notice that if we only assume that the left hull equals a two-sided curve in terms of sets, i.e. $L_t=\gamma[-t,t]$ for each $0 \leq t \leq T$, then we simply can take the curve given in Example~\ref{ex:counterexample} and extend it on its positive time part by setting $\gamma(t):=\gamma(1)$ for each $1 < t \leq 2$. The driving function remains the same as given in Example~\ref{ex:counterexample}. As a result, the left hull is given by a non-simple curve while condition~(\ref{lem:characterization_simple_3}) is still satisfied, indicating that the assumption referring to Definition~(\ref{Def:Two-sided-pioneercurve}) should not be dropped.
    \item\textbf{What if we add a strict growth condition?} Suppose that condition~(\ref{lem:characterization_simple_3}) additionally requires that the positive (resp.\ negative) time part satisfies a strict growth property, i.e., for the positive time part we have 
    $\gamma^{+}_s \subsetneq \gamma^{+}_t$ for each $0 \leq s < t \leq T$, and denote this condition by $(3)^{\prime}$. Under the assumptions stated in Lemma~\ref{lem:characterization_simple}, we provide a counterexample to the implication $(3)^{\prime} \implies (1)$.

    Let us restrict to the chordal, real-driven case for a moment and recall that $\SLE(8)$ is a space-filling curve driven by $\sqrt{8}\,W$, where $W$ is Brownian motion. Denote by $T_F$ the stopping time at which the $\SLE(8)$ first fills a closed set of positive area. By basic $\SLE$-theory or the invariance of domain, the curve up to the strictly positive time $T_F$ is almost surely non-simple, strictly growing, and the open set $\mathbb{H} \setminus \gamma[0,T_F]$ is connected. Using the reflection principle~(\ref{reflection:Real}), the driving function $\sqrt{8}\,W \big\vert_{T_F}$ generates the left hull $L_{T_F} = K_{T_F} \cup K^{\ast}_{T_F}$. Consequently, the open set $\mathbb{C} \setminus \left(L_{T_F} \cup R_0\right)$ is connected; however, the curve given by the $\SLE(8)$-trace together with its reflection is non-simple.
\end{itemize}

In the next section, we relate the class of $C^1\left([0,\infty);\mathbb C\right)$ drivers to two-sided pioneer curves.

\subsection{Two-sided pioneer curves and \texorpdfstring{$C^1$}{C^1}-drivers}\label{Section:3.2}

Recall that in the real-driven chordal setup, each driver $\lambda \in C^1\left([0,T];\mathbb R\right)$ generates a chordal hull given by a simple curve. As observed in~\cite[Theorem~3]{lind2022phase}, the same is not true in the complex-driven case. Motivated by this observation, in Proposition~\ref{lem:simple_sufficient} we prove a sufficient condition that $\lambda \in C^1([0,\infty);\mathbb C)$ creates a two-sided pioneer curve up to finite time in the sense of Definition~\ref{Def:Two-sided-pioneercurve}. The idea can be summarized heuristically as follows.

Fix $0 \leq t \leq t+s\leq T$ and decompose the left hull $L_{t+s}$ via the concatenation principle~(\ref{concat:2}):
\[
L_{t+s} = L_t \cup  g^{-1}_t(L_{t,t+s} \setminus R_t).
\]
Since $\lambda \in C^{1}$, we can partition $[0,T]$ into sufficiently small increments so that both $L_t$ and $L_{t,t+s}$ are given by two-sided pioneer curves as in Definition~\ref{Def:Two-sided-pioneercurve}. If we further assume that $L_{t,t+s} \cap R_t = \{\lambda(t)\}$, then the above decomposition simplifies to
\[
L_{t+s} = L_t \cup g^{-1}_t(L_{t,t+s} \setminus \{\lambda(t)\}).
\]
Noting that $L_{t+s} \setminus L_t = g^{-1}_t(L_{t,t+s} \setminus R_t) = g^{-1}_t(L_{t,t+s} \setminus \{\lambda(t)\})$ by the concatenation principle~(\ref{concat:2}), and using Lemma~\ref{lem:regularity_mappingout}, which shows that the tips of $L_t$ are mapped to $\lambda(t)$, we conclude that applying $g^{-1}_t \, \colon \, \mathbb C \setminus R_t \to \mathbb C \setminus L_t$ glues together a two-sided pioneer curve.

First, we prove that any finite time interval can be subdivided into sufficiently small sub-intervals on which Proposition~\ref{Thm:Tran's Theorem} can be applied.
\begin{lem}\label{lem:delta_subdivision}
    Let $\lambda \in C^1\left([0,\infty);\C\right)$. Then for each $T \geq 0$, there exists a $\delta > 0$, such that for any subdivision of $[0,T]$ into 
    \begin{equation}\label{eq:subdivision}
        0 \leq t_0 \leq t_1 \leq \dots \leq t_K = T \qquad \text{ with } \qquad 0<|t_k - t_{k-1}| \leq \delta,
    \end{equation}
    one has $\big\|\lambda(t_{k-1}+\cdot)\big\vert_{[0,t_k-t_{k-1}]} \big\|_{1/2} < \sigma$ for each $k \in \{1,\dots,K\}$, where $\sigma$ is the constant of Proposition~\ref{Thm:Tran's Theorem}.
\end{lem}
\begin{proof}
   Denoting by $\Delta_k = t_{k}-t_{k-1}$, we need to find $\delta > 0$ such that for all $k \in \{1,\dots,K\}$ we have
    \[
    \sup_{t,s \in [0,\Delta_k]} \frac{|\lambda(t_{k-1}+t)-\lambda(t_{k-1}+s)|}{|t-s|^{1/2}} \leq \sigma,
    \]
    where $\Delta_k < \delta$. Since $\lambda \in C^1([0,T];\mathbb C)$ by the intermediate value theorem for each $k$ we have 
    \[
    |\lambda(t_k+t)-\lambda(t_k+s)| \leq \sup_{\xi \in [0,\Delta_k]} |\lambda^{\prime}(t_k+\xi)| \, |t-s| \leq \sup_{\xi \in [0,T] } |\lambda^{\prime}(\xi)| \, |t-s|.
    \]
    Since $|t-s| \leq \Delta_k \leq \delta$, a small algebraic manipulation yields
    \[
    \frac{|\lambda(t_{k}+t)-\lambda(t_k+s)|}{|t-s|^{1/2}} \leq \sup_{\xi \in [0,T]} |\lambda^{\prime}(\xi)| \, \delta^{1/2}.
    \]
    Now, we can choose $0<\delta < \left(\sigma \, \sup_{\xi \in [0,T]} |\lambda^{\prime}(\xi)|\right)^{-2}$, such that for each $k$, whenever $\Delta_k <\delta$, it holds that $\|\lambda(t_{k-1}+\cdot) \big \vert_{[0,t_{k}-t_{k-1}]}\|_{1/2} \leq \sigma$. This finishes the proof.
\end{proof}
Using the previous lemma, we are now ready to prove our sufficient condition.
\begin{prop}\label{lem:simple_sufficient}
    Let $\lambda \in C^1([0,T];\C)$ and consider a subdivision of $[0,T]$ into
    \[
    0 = t_0 < t_1 < \dots < t_K = T  \qquad \text{ with } \qquad |t_{k} - t_{k-1}| \leq \delta,
    \]
    such that $\|\lambda(t_{k-1}+\cdot) \big \vert_{[0,t_{k}-t_{k-1}]}\|_{1/2} < \sigma$ for each $k \in \{1,\dots,K\}$, where $\sigma$ is the constant of Proposition~\ref{Thm:Tran's Theorem}. If the subdivision satisfies 
    \begin{equation}\label{eq:sufficient_equation}
        L_{t_k,T} \cap  R_{t_{k-1},t_k} =  \{\lambda(t_k)\},
    \end{equation}
    for each $k \in \{1,\dots,K\}$, then $\lambda$ creates a left hull that is given by a two-sided pioneer curve up to time $T$ in the sense of Definition~\ref{Def:Two-sided-pioneercurve}.
\end{prop}
\begin{proof}[Proof of Proposition~\ref{lem:simple_sufficient}]
Fix $T \geq 0$ and consider a subdivision
of $[0,T]$ given by Lemma~\ref{lem:delta_subdivision} that satisfies the additional assumption~(\ref{eq:sufficient_equation}). For each $k \in \{1,\dots,K\}$ using~(\ref{concat:2}) we have
\begin{equation*}\label{eq:glued_hulls}
    L_{t_{k-1},T} = L_{t_{k-1},t_k} \cup g^{-1}_{t_{k-1},t_k}\left(L_{t_k,T} \setminus R_{t_{k-1},t_k}\right),
\end{equation*} 
where we recall that $L_{t_{k-1},t_k}$ denotes the left hull driven by $\lambda(t_{k-1}+\cdot)$ up to time $t_{k}-t_{k-1}$ and $g_{t_{k-1},t_k} \, \colon \, \mathbb C \setminus L_{t_{k-1},t_k} \to \mathbb C \setminus R_{t_{k-1},t_k}$ is the conformal such that by~(\ref{concat:1}) we have $g_{t_{k-1},t_k}(L_{t_{k-1},T} \setminus L_{t_{k-1},t_k}) = L_{T-t_{k}}(\lambda(t_{k}+\cdot))$.

Using our additional assumption, the above reads
\begin{equation}\label{eq:gluing}
    L_{t_{k-1},T} = L_{t_{k-1},t_k} \cup g^{-1}_{t_{k-1},t_k} \left(L_{t_{k},T} \setminus \{\lambda(t_k)\}\right),
\end{equation}
Using backwards iteration on $k\in\{1,\dots,K\}$, we are going to prove that $L_T$ is given by a two-sided pioneer curve.

The base case $k=K$ follows directly since our choice of subdivision allows us to apply Proposition~\ref{Thm:Tran's Theorem} to show that $L_{t_{k-1},K}$ is given by a two-sided pioneer curve. Next, assume that $L_{t_k,T}$ is given by a two-sided pioneer curve, we prove the same is true for $L_{t_{k-1},T}$. Using the concatenation principle~(\ref{concat:2}), we have
\[
L_{t_{k-1},T} = L_{t_{k-1},t_k} \cup g^{-1}_{t_{k-1},t_k} \left(L_{t_k,T} \setminus \{\lambda(t_k)\}\right).
\]
Again using our choice of subdivision and Proposition~\ref{Thm:Tran's Theorem} the left hull $L_{t_{k-1},t_k}$ is given by a two-sided pioneer curve that starts at $\lambda(t_{k-1})$. Since by (\ref{concat:1}), the above union is disjoint and gives $L_{t_{k-1},T}$, we already know that $L_{t_{k-1},T}$ is given by a two-sided pioneer curve up to time $t_{k}-t_{k-1}$. It is left to prove that the same is true for $t_{k}-t_{k-1} < t \leq T-t_{k-1}$. For each $0<s \leq T-t_k$ denote by $\gamma_{k-1}(t_{k}-t_{k-1}+s):=g^{-1}_{t_{k-1},t_k}(\gamma^{\lambda(t_k+\cdot)}(s))$. Without loss of generality, it suffices to check that for each $0 <s \leq T-t_k$ the point $\gamma_{k-1}(t_k-t_{k-1}+s)$ is given by the top frontier limit of the driving function $\lambda(t_{k-1}+\cdot)$ at time $t_k-t_{k-1}+s$. By the concatenation principle~(\ref{concat:0}) for fixed $0<s\leq T-t_{k}$, by the above decomposition
\begin{align*}
    \gamma_{k-1}(t_{k}-t_{k-1}+s) &= g^{-1}_{t_{k-1},t_k}\left(\gamma^{\lambda(t_{k}+\cdot)}(s)\right) \\
    &=g^{-1}_{t_{k-1},t_k} \left(\lim_{\varepsilon \downarrow 0} g^{-1}_{t_k,T}(i\epsilon+\lambda(t_k+s)) \right) = \lim_{\epsilon \downarrow 0} g^{-1}_{t_{k-1},T}(i\epsilon+\lambda(t_{k-1}+(t_k-t_{k-1}+s)),
\end{align*}
which shows that $\gamma_{k-1}(t_{k}-t_{k-1}+s)$ is the top frontier limit of the driving function $\lambda(t_{k-1}+\cdot)$ at time $t_k-t_{k-1}+s$. It follows that $\Gamma_{k-1}$ is given by a two-sided pioneer curve, and this finishes the proof.
\end{proof}
The following corollary is used in Section~\ref{Section:5}. Briefly speaking, it proves that we also create a left hull given by a two-sided pioneer curve if we start with the stronger assumption that we have control over the entire intersection $L_{t,t+s} \cap R_t = \{\lambda(t)\}$, where $0 \leq t \leq t+s \leq T$.
\begin{cor}\label{cor:1}
    Let $\lambda \in  C^{1}\left([0,T];\mathbb C\right)$ and suppose that  $L_{t,t+s} \cap R_{t} = \{\lambda(t)\}$ for each $0 \leq t \leq t+s \leq T$. Then $\lambda$ creates a left hull given by a two-sided pioneer curve up to time $T$ in the sense of Definition~\ref{Def:Two-sided-pioneercurve}.
\end{cor}
\begin{proof}
    Fix $T \geq 0$, then by Lemma~\ref{lem:delta_subdivision} there exist a $\delta > 0$ and a finite subdivision of $[0,T]$, such that the claim follows if we show that the assumption~(\ref{eq:sufficient_equation}) of Proposition~\ref{lem:simple_sufficient} is satisfied. Recall that the concatenation property~(\ref{concat:3}) of right hulls states $R_{s+t}=R_{t,t+s} \cup g_{t,t+s}(R_t \setminus L_{t,t+s})$. For each $k \in \{1,\dots,K\}$, take $t:=t_{k-1}$ and $t+s:=t_{k}$, then $R_{t_k} = R_{t_{k-1},t_k} \cup g_{t_{k-1},t_k}\left(R_{t_{k-1}} \setminus L_{t_{k-1},t_{k}} \right)$ and so condition~(\ref{eq:sufficient_equation}) is automatically satisfied by our assumption. Applying Proposition~\ref{lem:simple_sufficient} finishes the proof.
\end{proof}
\section{Classification of the complex linear driver}\label{Section:4}

This section is motivated by the works of~\cite{lind2022phase} and \cite[Section~3]{kager2004exact}. Briefly, speaking in~\cite[Section~3]{kager2004exact} it was proven that the chordal hulls $(K_t)_{t \geq 0}$ driven by the driving function $\lambda(t):=\kappa t$, where $\kappa \in \mathbb R$, are generated by a curve in the sense of Lemma~\ref{lem:generated_by_curve} and its proceedings. Moreover, for sufficiently large $t \geq 0$, the growth rate of the associated curve $\gamma_{\kappa}$ was calculated. 
In~\cite[Theorem~3]{lind2022phase} it was proven that the left hulls associated with the complex-valued one-parameter family of square root drivers given by $\{c\sqrt{1-\cdot}\}_{c \in \mathbb C}$, admit a phase transition in terms of $c \in \mathbb C$. For our setup, for each $c \in \mathbb C$ define $\lambda_c(t):=ct$ as the complex linear driver. We focus on the following two key questions.
\begin{enumerate}
    \item Since there is no generated by a curve statement in the complex-driven theory, is it true that each driver $\lambda_c$ creates a left curve given by a two-sided curve up to time $t$ in the sense of Definition~\ref{Def:Two-sided-pioneercurve}?
    \item If we can answer the first question positively, does the geometry of the curve admit a phase transition in terms of $c$? If so, can we classify the different geometries in terms of $c$?
\end{enumerate}
Using our general framework developed in Section~\ref{Section:3}, in Theorem~\ref{thm:classification_linear_complete} we prove that each $\lambda_c$ creates a left hull that is given by a two-sided curve. The geometry of the underlying curve admits a phase transition in terms of $c$ and can be classified into a simple,  simple with one-end spiraling and a new exotic variant, where the negative time part stops growing after a $c$-depended finite amount of time. We are able to compute the asymptotic growth in the first two phases and determine the phase boundaries in terms of an explicit quantity involving $c$.

\subsection{The pioneer equation}
From now on, for each $c \in \mathbb C$, we denote by $\lambda_c(t):=ct$. Since we are interested in the family of hulls created by $\{\lambda_c\}_{c \in \mathbb C}$, we first need to understand how the geometry of the associated hull interplays with the complex parameter $c$. Understanding this relationship is the main task of this section.
Fix any $c \in \mathbb C \setminus \{0\}$, in Lemma~\ref{lem:calc_1} we prove that each pioneer point $z_c(t) \in L_t \setminus \cup_{0 \leq s <t}L_s$ is an implicit solution to an equation that depends on $c$. This equation governs the growth of the underlying left hull and is therefore referred to as the pioneer equation. Using the pioneer equation, we are able to guess a suitable classification result for the complex linear driver. We start with the following technical lemma that is needed for the derivation of the pioneer equation.
\begin{lem}\label{lem:2/c}
    For each $c \in \C \setminus \{0\}$, define $\lambda_c \colon [0,\infty) \to \C$ by setting $\lambda_c(t)=ct$. Then the following statements are true.
    \begin{enumerate}
        \item\label{lem:2/c_1} For any $T \geq 0$, one has $2/c \notin L_{T}(\lambda_c)$.
        \item\label{lem:2/c_2} For any $z \in \C$ and all $t < T_z$, one has $|f_t(z) - 2/c|>0$.
    \end{enumerate}
\end{lem}
Before starting the proof, we want to demonstrate one of the main techniques of working with complex linear drivers. Fix $s,t \geq 0$, applying the concatenation principle of left hulls~(\ref{concat:1}) at time $t$ for an increment of the hull up to time $t+s$, we obtain a translated copy of the same left hull up to time $s$ modulo right hull at time $t$. Precisely, by the concatenation principle of left hulls~(\ref{concat:1}) one has
    \[
    g_t(L_{t+s} \setminus L_{t}) = L_{s}(\lambda_c(t+\cdot)) \setminus R_{t},
    \]
    Using $\lambda_c(t+\cdot)=ct+\lambda_c(\cdot)$ and the translation property~(\ref{trans:0}), we rewrite as follows
    \[
    g_t(L_{t+s} \setminus L_{t}) = \left(tc+L_{s}\right) \setminus R_{t}.
    \]
The right-hand side is exactly our self-similar copy up to time $s$ that is translated by $ct$ and modulo the right hull at time $t$. We are now ready for the proof.
\begin{proof}[Proof of Lemma~\ref{lem:2/c}]
    We start by proving~(\ref{lem:2/c_1}) via a contradiction argument. Suppose $T_{2/c} \leq C < \infty$ then by Definition~\ref{Def:complex_loewner_chains} and its remarks, there exists a unique solution $(g_t(2/c))_{0 \leq t < T_z}$ to the Loewner equation
    \begin{align*}
        \partial_t g_t(2/c) &= \frac{2}{g_t(2/c)-ct}, \qquad   g_0(2/c) = 2/c,
    \end{align*}
    where $T_{2/c} >0$ is strictly positive. Solving the above gives $g_t(2/c)=2/c+ct$ and since $T_{2/c} \leq C < \infty$, applying Lemma~\ref{lem:regularity_mappingout} we have
    \[
     \lim_{t \uparrow T_{2/c}} |g_t(2/c)-ct| = 0,
    \]
    which is impossible since $c \in \C \setminus \{0\}$. For the second claim, we are going to apply a self-similarity up to translation argument. Suppose, there exist $z \in \C$ and $t \in [0,T_z)$ such that $f_t(z)=2/c$. Choosing $s > 0$ such that $0 \leq t \leq t+s<T_z$, then since $t+s<T_z$ by the concatenation principle of the left hulls~(\ref{concat:1}) one has $g_t(z) \in L_{t,t+s}\setminus R_{t}$. Applying the self-similarity of the linear driver as explained above, in terms of the centralized mapping out function we have
    \[ 
    f_t (z) \in L_s \setminus R_t.
    \]
    Consequently, $f_t(z) = 2/c \in L_{s} \setminus R_t$ and using the first part, this is impossible. This finishes the proof.
\end{proof}
We now derive the pioneer equation, which governs the growth of the underlying left hull.
\begin{lem}\label{lem:calc_1}
    For each $c \in \C \setminus \{0\}$, define $\lambda_c \colon [0,\infty) \to \C$ by setting $\lambda_c(t)=ct$. For each $z \in \mathbb C$, the centralized Loewner chains $(f_t(z))_{0 \leq t < T_z}$ satisfy
    \begin{equation}\label{eq:implicit_solution_linear}
        \frac{f_t(z)}{c}+\frac{2}{c^2} \log\left(2-cf_t(z)\right) = \frac{z}{c} + \frac{2}{c^2} \log(2-cz)-t.
    \end{equation}
    Moreover, for each $t \geq 0$, the pioneer points $z_c(t) \in L_t \setminus \cup_{0 \leq s < t} L_s$ satisfy
    \begin{equation}\label{eq:singularities_curve}
        c z_c(t)+2\log(2-cz_c(t)) = 2\log(2)+c^2 t\tag{PE},
    \end{equation}
    which we call the pioneer equation.
\end{lem}
\begin{proof}
    Fix $z \in \C$ and consider $f_t(z) = g_t(z)- \lambda_c(t)$ as in Definition~\ref{Def:complex_loewner_chains} and its remarks. For all $t < T_z$, the centralized Loewner differential equation is given by
    \[
        \frac{df_t(z)}{dt} = \frac{2}{f_t(z)} - c = \frac{2-cf_t(z)}{f_t(z)}.
    \]
    We first rewrite this in terms of a separable differential equation
    \begin{equation}\label{eq:calc1}
        f_t(z) df_t(z) = \left(2-cf_t(z)\right)dt.
    \end{equation}
    Using the previous Lemma~\ref{lem:2/c}, we are allowed to rewrite this as
    \[
     \frac{f_s(z)}{2-cf_s(z)} df_t(z) = dt.
    \]
    After applying a partial fractional decomposition this simplifies as follows
    \[
    \left(\frac{2}{c \left(2-c f_s(z)\right)} - \frac{1}{c} \right)\, df_t(z) = dt.
    \]
    Now solving this separable differential equation and applying a small algebraic manipulation yields
    \[
    -\frac{f_t(z)}{c}-\frac{2}{c^2} \log(2-cf_t(z)) = -\frac{f_0(z)}{c}-\frac{2}{c^2}\log(2-cf_0(z))+t.
    \]
    By the initial condition, we conclude that $(f_t)_{0 \leq t < T_z}$ satisfies
    \begin{equation}\label{eq:calc2}
        \frac{f_t(z)}{c} + \frac{2}{c^2} \log(2-cf_t(z)) = \frac{z}{c} + \frac{2}{c^2} \log(2-cz) - t.
    \end{equation}
    Fix $z_c(t) \in L_t \setminus \cup_{0 \leq s < t}\,L_s$ and recall that
    $\lim_{s \uparrow t} |f_s(z_c(t))|=0$ by Lemma~\ref{lem:regularity_mappingout}. By taking limits, we conclude that the pioneer point $z_c(t)$ satisfies
    \[
    cz_c(t)+2\log(2-cz_c(t)) = 2\log(2)+c^2t.
    \]
    This finishes the proof.
\end{proof}
We make the following remarks on the pioneer equation.
\begin{itemize}
    \item We denote by $\ARG \, \colon \, \mathbb C \setminus \mathbb R_{\leq 0}\to (-\pi,\pi)$ the single valued argument function, i.e. we consider the argument function under the principal branch of the logarithm, and use $\text{arg}(z)=\{\ARG(z)+2\pi k \, \colon \, k \in \mathbb Z\}$ to denote the multivalued argument function.    
    \item Throughout this work, the pioneer equation~(\ref{eq:singularities_curve}) is one of our main tools that we use to classify the complex linear driver. Considering its real and imaginary parts, we have
    \begin{align*}
    \re(c) \re(z_c(t)) - \im(z_c(t)) \im(c) + 2 \log\left(|2-cz_c(t)|\right) &= 2\log(2) + \re(c^2) t \label{eq:real}\tag{RE},\\
    \re(c) \im(z_c(t)) + \im(c) \re(z_c(t)) + 2 \text{arg}\left(2-c z_c(t)\right)  &= \im(c^2) t, \label{eq:imaginary}\tag{IM}
    \end{align*} 
    where $z_c(t)$ is some pioneer point of some fixed time instant $t \geq 0$. 
    \item Fix a pioneer point $z_c(t)$ of some time instant $t \geq 0$. By the remarks of Definition \ref{Def:complex_loewner_chains}, the centered solution $f_s(z_c(t))_{s < t}$ of the Loewner equation and its limit which solves the pioneer equation at time $t$ is unique. As a result, the pioneer point $z_c(t)$ solves \eqref{eq:imaginary} for a specific branch of the multivalued argument function. In other words, we have $\arg(2-cz_c(t))=\ARG(2-cz_c(t))+2\pi \ell(z_c(t))$, where $\ell(z_c(t))$ is some integer constant. Take for example $t=0$ and consider any pioneer point $z_c(0)$, since $L_0=\{0\}$ there is only a single pioneer point that is given by the origin such that by \eqref{eq:imaginary} we have $\ell(z_c(0))=0$. Hence, at time $t=0$ the principal branch for the argument function is considered.
    \item Fix $c \in \mathbb C$ and assume for a moment that $\lambda_c$ creates a left hull that is given by a two-sided pioneer curve in the sense of Definition~\ref{Def:Two-sidedcurve} up to time $T$. One of our goals is to trace the behavior of the created curve $\gamma_c$. By Lemma~\ref{lem:calc_1}, we know that the pioneer points are given by the positive and negative time instants of the curve, more precisely, the points $\gamma_c(\pm t) \in L_{t} \setminus \cup_{0 \leq s < t} L_{s}$ are all the pioneer points satisfying the pioneer equation at time $t$. Now for such a curve it is possible to track the crossings of the branch cut by the curve over time. As above we define $\ell \, \colon \, \gamma[-T,T] \to \mathbb Z$ pointwise by setting it to be the unique integer satisfying
    \[
    \arg(2-c\gamma_c(t)) = \ARG(2-c\gamma_c(t))+2\pi \ell(\gamma_c(t)),
    \]
    such that $\gamma_c(t)$ solves~(\ref{eq:imaginary}) for each $t \in [-T,T]$. To account for possible crossings, the value of $\ell$ changes by $1$ (resp. $-1$) if the curve $\gamma_c$ crosses the branch cut from the positive (resp. negative valued) side of the argument function $z \mapsto \ARG(2-cz)$. To get a better visualization, Figure \ref{figure:2} sketches $\ARG(2-cz)$ and the principal branch and summarizes our discussion.
    \begin{figure}[H]
    \centering
    \includegraphics[width=0.7\textwidth]{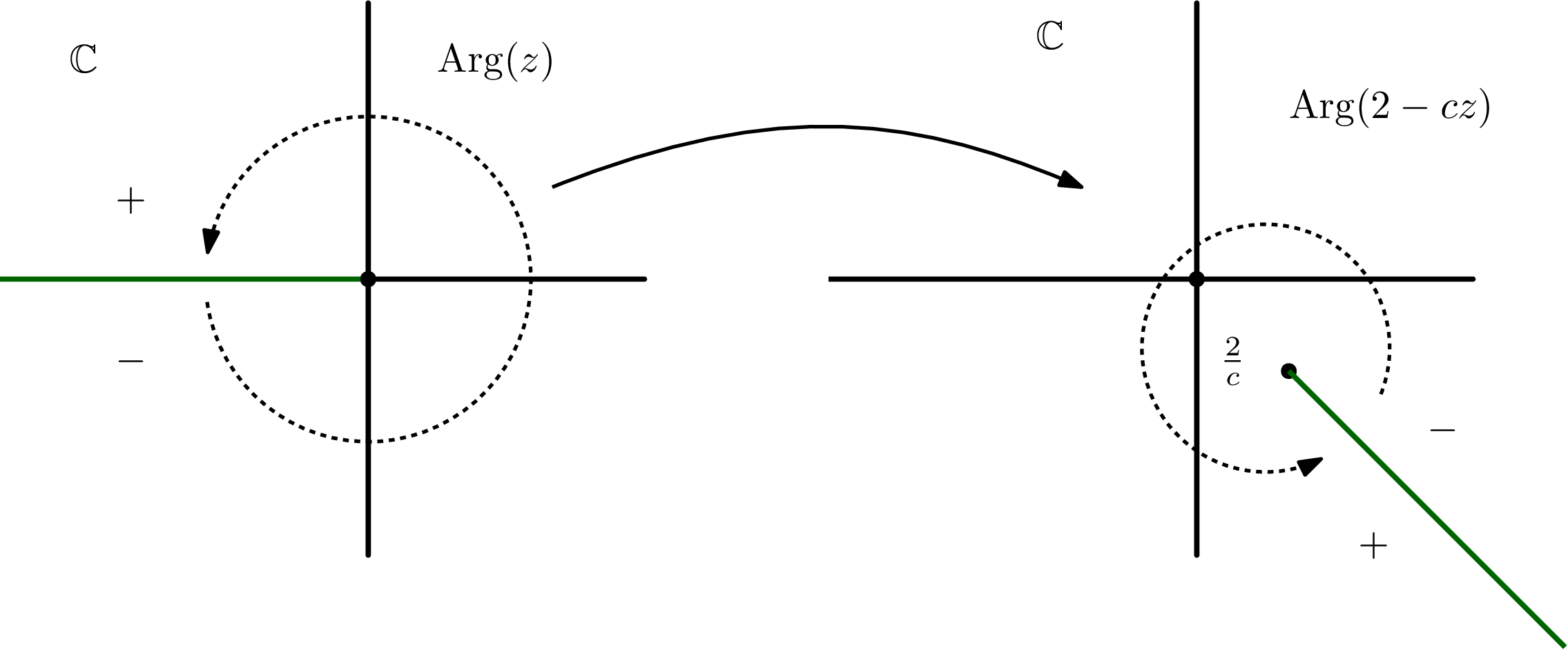}
    \caption{A sketch of $\ARG(2-cz)$ under taking the principal branch, where $c$ is an element in the first quadrant satisfying $\re(c^2)=0$. Performing a quick algebraic manipulation shows that the branch cut is the diagonal emanating from $2/c$ and crossing the forth quadrant (\textcolor{ForestGreen}{green}). Observe that points of negative (resp. positive) argument are flipped along this line rooted in $2/c$.}
    \label{figure:2}
    \end{figure}
    \item Observe that the term $\re(c^2)$ is especially interesting when it comes to the behavior of the real and imaginary part of a pioneer point for large times $t$. This suggests that any possible phase transition of the generated hulls depends on the sign of $\re(c^2)$.
\end{itemize}
We are now ready to formulate our main classification result for the complex linear driver.
\subsection{Geometric classification of the plane}
Suppose for a moment that for each $c \in \mathbb C$, the complex linear driver $\lambda_c(t)=ct$ creates a left hull that is given by a two-sided curve in the sense of Definition~\ref{Def:Two-sided-pioneercurve}. As discussed in the previous remark, if we are interested in the geometry of the created curve and its relationship with the complex parameter $c$, then it is reasonable to believe that any classification result just depends on the sign of $\re(c^2)$. We first argue that it suffices to classify the interior of the first quadrant, which we denote by $\FQ$, in terms of $\re(c^2)$.

The case $\lambda_{\kappa}(t)=\kappa t$, where $\kappa \in \mathbb R$, is already treated in~\cite[Chapter~3]{kager2004exact}. For each $\kappa$ on the real line, it was proven that $\lambda_{\kappa}$ creates a chordal hull given by a simple curve, which stays in the first quadrant. Applying the reflection principle~(\ref{reflection:Real}) shows that the same driver in the complex-driven setup creates a left hull that is given by the same curve and its reflection along the real axis. As a result, each $\lambda_{\kappa}$ creates a left hull given by a pioneer curve in the sense of Definition~\ref{Def:Two-sided-pioneercurve}. By the duality principle~(\ref{duality:2}) and similar means, the case $\lambda_{\kappa}(t)=i\kappa t$, where $\kappa \in \mathbb R$ follows analogously. So let us focus on the interior of the first quadrant, which we split into two regions and one line segment given by
\[
\Omega_+ := \{ c \in \FQ : \re(c^2) > 0 \}, \quad
\Omega_- := \{ c \in \FQ : \re(c^2) < 0 \}, \quad
\Omega_0 := \{ c \in \FQ : \re(c^2) = 0 \}.
\]
Using the reflection properties stated in Lemma~\ref{lem:reflections}, 
it suffices to understand the geometry of the two-sided curve on each set. For instance, denote by $\mathcal{C}_{+} := \Omega_{+} \cup (\Omega_{+})^{\ast} \cup (-\Omega_{+})^{\ast} \cup (-\Omega_{+})$ the reflection of $\Omega_+$ on the real (resp. imaginary) axis and the origin. Then using the reflection properties~(\ref{reflection:Real}), (\ref{Reflection:Imaginary}) and (\ref{Reflection:Origin}) in this order, we also get a classification for $\mathcal{C}_{+}$. As a result, if we prove that $\lambda_c$ creates a two-sided curve and understand its geometry in terms of $\Omega_+,\Omega_-$ and $\Omega_0$, we automatically get a classification of the plane minus the real (resp. imaginary axis) in terms of
\[
\mathbb C \setminus (\mathbb R \cup i \mathbb R)
= \mathcal{C}_{+} \cup \mathcal{C}_{-} \cup \mathcal{C}_{0},
\]
where $\mathcal{C}_{-},\mathcal{C}_0$ are given by the same applications of reflection principles. Lastly, we denote by $C_b(\mathbb C \setminus \gamma)$ the bounded connected component of the complement of a Jordan curve $\gamma$. We are now ready to present the classification result. Each complex linear driver $\lambda_c$ generates a left hull given by a two-sided curve that is either simple, simple with one end spiraling, or a previously unobserved exotic variant. We advice the reader to check Figure~\ref{figure:10001} for an illustration.
\begin{manualtheorem}{A}[Geometric Classification]\label{thm:classification_linear_complete}
For each $c \in \mathbb C$, define $\lambda_c \colon [0,\infty) \to \C$ by setting $\lambda_c(t):=ct$. For $c \in \mathbb C \setminus (\mathbb R \cup i\mathbb R)$, the geometry of the created left hulls can be classified into three different regions $\mathcal{C}_{+},\mathcal{C}_{-}$ and $\mathcal{C}_0$. It suffices to classify the interior of the first quadrant in terms of $\Omega_{+}, \Omega_{-}$ and $\Omega_{0}$, where the following statements are true.
\vspace{-\topsep}
\vspace{8pt}
\begin{enumerate}[label=\textbf{A.\arabic*}]
    \item\label{item:classification_nonzero}\textbf{Fig.(\ref{Subfigure:plus},\ref{Subfigure:minus})} For each $c \in \Omega_{+} \cup \Omega_{-}$, the driver $\lambda_c$ creates a left hull that is given by a two-sided pioneer curve in the sense of Definition~\ref{Def:Two-sided-pioneercurve}. Its asymptotic growth satisfies the following:
    \begin{itemize}
        \item\label{item:classification_nonzero_1} For $c \in \Omega_{+}$ the real and imaginary parts of the positive (resp. negative) time part of the created curve grow asymptotically linearly with rate given by
        {\setlength{\abovedisplayskip}{8pt}
        \setlength{\belowdisplayskip}{8pt}
        \begin{align*}
         \lim_{t \uparrow \infty} \frac{\re(\gamma_c(\pm t))}{t} &= \frac{\re(c^2)\,\re(c)+\im(c)\,\im(c^2)}{|c|^2},\\ 
         \lim_{t \uparrow \infty} \frac{\im(\gamma_c(\pm t))}{t} &= \frac{\re(c) \, \im(c^2)-\re(c^2) \,\im(c)}{|c|^2}. \\
        \end{align*}
        \item\label{item:classification_nonzero_2}
        For $c \in \Omega_{-}$ the negative time part of the curve spirals linearly counter-clockwise into $2/c$. Precisely speaking, we have
        {\setlength{\abovedisplayskip}{8pt}
        \setlength{\belowdisplayskip}{8pt}}
        \[
            \lim_{t \to \infty} \gamma(-t) = 2/c, \qquad \text{ and } \qquad \lim_{t \to \infty} \frac{2\ARG(2-c\gamma_c(-t))+2\ell(\gamma_c(-t))}{t} = \im(c^2),
        \]
        where $\ell$ is the function defined in the remarks following Lemma~\ref{lem:calc_1}.
        }
    \end{itemize}
    \item\label{item:classification_zero}\textbf{Fig.(\ref{Subfigure:zero})} For each $c \in \Omega_{0}$ denote by $t_{\ast}:=4\pi/\im(c^2)$. Then the following statements are true.
        \begin{enumerate}
        \item\label{subitem_zero:a} For all $0 \leq t < t_{\ast}$, the left hull is given by a two-sided pioneer curve $\gamma_c$ up to time $t$.
        \item\label{subitem_zero:b} For $t=t_{\ast}$, the left hull is given by a two-sided curve $\gamma_c$ up to time $t_{\ast}$. We have $\gamma^{+}_{t_{\ast}} \cap \gamma^{-}_{t_{\ast}} = \{0\}$, where $\gamma^{+}_{t_{\ast}}$ is simple and contained in the first quadrant for strictly positive times while $\gamma^{-}_{t_{\ast}}$ is a Jordan curve rooted at the origin, running through the forth quadrant and disconnecting $2/c$ from infinity.
        \item\label{subitem_zero:c} For each $t>t_{\ast}$ we have $L_t=\gamma_c[-t_{\ast},t]$, i.e. the left hull is given by a two-sided curve up to times $(t_{\ast},t)$ in the sense of Definition~\ref{Def:Two-sided-pioneercurve}. The positive time part is a pioneer curve contained in the first quadrant for strictly positive times, the negative time part remains the same as in the previous item. Hence for each $s \geq 0$, we have $L_{t_{\ast}+s} \cap C_b(\mathbb C \setminus \gamma_c[-t_{\ast},t_{\ast}+s]) = \emptyset$. In other words, after time $t_{\ast}$ the left hull disconnects an open set from infinity for arbitrary large times.
        \end{enumerate}
    \end{enumerate}
\end{manualtheorem}
\vspace{0.5cm}
\begin{figure}[H]
    \vspace{-0.5em}  
    \centering
    \begin{subfigure}[t]{0.30\textwidth}
        \includegraphics[width=\textwidth]{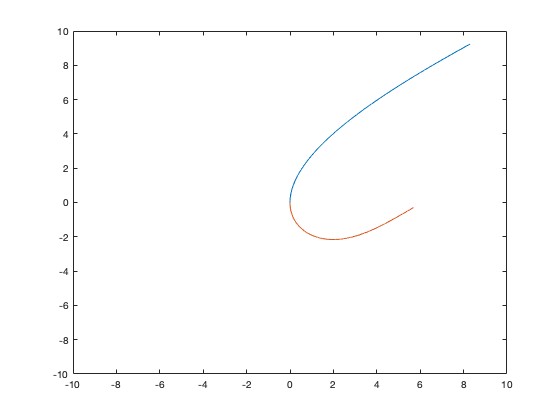}\\
        \includegraphics[width=\textwidth]{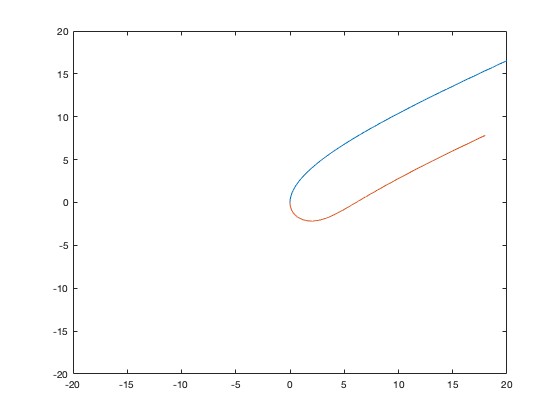}
        \caption{Simple phase for $c \in \Omega_{+}$: The left hull is given by a two-sided pioneer curve. Both the positive time part (\textcolor{blue}{blue}) and the negative time part (\textcolor{orange}{orange}) are simple.}
        \label{Subfigure:plus}
    \end{subfigure}
    \hspace{1em}
    \begin{subfigure}[t]{0.30\textwidth}
        \includegraphics[width=\textwidth]{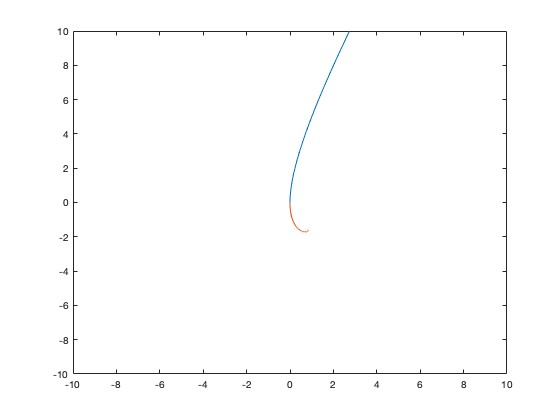}\\
        \includegraphics[width=\textwidth]{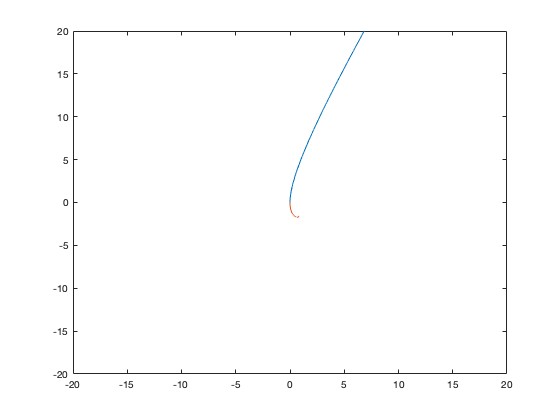}
        \caption{Simple, one arm spiraling phase for $c \in \Omega_{-}$: The left hull is given by two-sided pioneer curve. The positive time part (\textcolor{blue}{blue}) is simple, the negative time part (\textcolor{orange}{orange}) is simple but spirals counterclockwise into $2/c$ as $t \to \infty$.}
        \label{Subfigure:minus}
    \end{subfigure}
    \hspace{1em}
    \begin{subfigure}[t]{0.30\textwidth}
        \includegraphics[width=\textwidth]{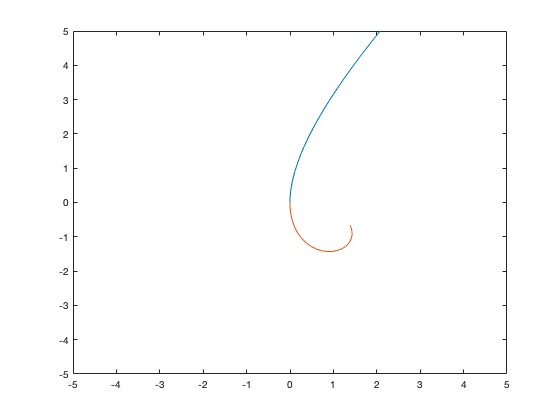}\\
        \includegraphics[width=\textwidth]{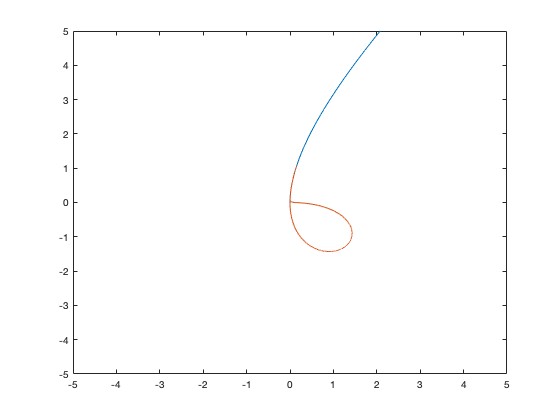}
        \caption{Exotic phase for $c \in \Omega_{0}$: For $t < t_{\ast} = 4\pi / \im(c^2)$, the left hull is given by a two-sided pioneer curve. For $t > t_{\ast}$, the positive time part (\textcolor{blue}{blue}) remains simple. The negative time part up to $t_{\ast}$ (\textcolor{orange}{orange}) forms a Jordan curve rooted at the origin. The region enclosed by the negative time curve, which is disconnected from infinity, is never added to the left hull. Consequently, the left hull continues as a two-sided curve up to times $(t_{\ast}, t)$.}
        \label{Subfigure:zero}
    \end{subfigure}
    \caption{Left hulls created by the complex linear driver $\lambda_c(t)=ct$ for $c \in \Omega_{+}, \Omega_{-}$, and $\Omega_{0}$. Each left hull is a two-sided curve, with geometry that is simple, simple with one end spiraling, or an exotic variant.}
    \label{figure:10001}
    \vspace{-0.5em}
\end{figure}
\section{Proof of the geometric classification}\label{Section:5}
In this section, we prove our geometric classification stated in Theorem~\ref{thm:classification_linear_complete}. We deal with the simple, one-sided spiraling regions $\Omega_{+}$ and $\Omega_{-}$ in Section~\ref{Section:5.1}. The exotic phase $\Omega_0$ is dealt with in Section~\ref{Section:5.2}. Using our results, in Section~\ref{Section:5.3} we derive a new sharper upper bound for the constant appearing in Proposition~\ref{Thm:Tran's Theorem}.
\subsection{Simple and spiraling phase}\label{Section:5.1}
We explain the main strategy for proving Theorem~\ref{item:classification_nonzero}. In contrast to the theory of real drivers, although $\lambda_c \in C^\infty\left([0,\infty);\mathbb C\right)$ we cannot a priori assume that $\lambda_c$ creates a left hull that is given by a two-sided pioneer curve in the sense of Definition~\ref{Def:Two-sided-pioneercurve}. Deriving this fact is our first task. We apply the theory of $C^1$-drivers developed in Section~\ref{Section:3.2}, especially Proposition~\ref{lem:simple_sufficient} and Corollary~\ref{cor:1} to establish this fact in Lemma~\ref{lem:construction_hull_simple}. Then using the pioneer equation~(\ref{eq:singularities_curve}) we calculate the asymptotic growth of the created curve in terms of $c$ in Lemma~\ref{lem:3}.

\begin{lem}\label{lem:construction_hull_simple}
For $c \in \Omega_{+} \cup \Omega_{-}$, define $\lambda_c \colon [0,\infty) \to \C$ by setting $\lambda_c(t):=ct$ and denote by $\widetilde{\lambda}_c(t):=ict$. The driving function $\lambda_c$ creates a two-sided pioneer curve in the sense of Definition~\ref{Def:Two-sided-pioneercurve}.
\end{lem}
\begin{proof}
Fix $c \in \Omega_+ \cup \Omega_{-}$ we prove that $L_{t,t+s} \cap R_t = \{\lambda_c(t)\}$ for each $0 \leq t \leq t+s < \infty$ so that the claim follows by applying Corollary~\ref{cor:1}. Using the fact that we are using a linear driving function, we first simplify the intersection above. Since $\lambda_c(t+\cdot)=ct+\lambda_c(\cdot)$ by applying the translation property~(\ref{trans:0}) we have $L_{t,t+s} = ct + L_s$ for each $0 \leq t \leq t+s<\infty$. By using the duality principle~(\ref{duality:2}) and some algebraic manipulations, we see that the right hull satisfies $R_t=ct+iL_{t}(\widetilde{\lambda}_c)$ for each $t \geq 0$.
Combining both manipulations, it now suffices to prove that
\[
L_{s}(\lambda_c) \cap iL_t(\widetilde{\lambda}_c) = \{0\},
\]
for each $0 \leq t \leq t+s < \infty$.

The origin lies in the intersection, and the remainder of the proof shows that it is the only such point. Fix $0 \leq t \leq t+s < \infty$ and consider any non-zero $z \in L_s(\lambda_c) \cap iL_t(\widetilde{\lambda}_c)$, then $z$ (resp. $-iz$) must satisfy the pioneer equation driven by $\lambda_c$ (resp. $\widetilde{\lambda}_c$) at time $T^{\lambda_c}_z$ (resp. $T^{\widetilde{\lambda}_c}_{-iz}$). We prove that this forces $z$ to be zero, by relating $T^{\lambda_c}_z$ with $T^{\widetilde{\lambda}_c}_{-iz}$ via the pioneer equation~(\ref{eq:singularities_curve}).

As $t \uparrow T_z$  the point $z$ satisfies the pioneer equation~(\ref{eq:singularities_curve}) given by
\begin{equation}\label{eq:FE_1}
    cz + 2 \log(2-cz) = 2\log(2) + c^2 T^{\lambda_c}_{z}.
\end{equation}
We need to derive a similar expression for $-iz$ by deriving a pioneer equation for the Loewner equation driven by $\widetilde{\lambda}_c$. Take any $w \in \C$ and let $0 \leq t \leq T^{\widetilde{\lambda}_c}_w$, then the centered Loewner chains $(f^{\widetilde{\lambda}_c}_t(w))_{0 \leq t \leq T^{\widetilde{\lambda}_c}_w}$ satisfy
\[
    \partial_t f^{\widetilde{\lambda}_c}_t(w) = \frac{2}{f^{\widetilde{\lambda}_c}_t(w)}, \qquad f^{\widetilde{\lambda}_c}_0(w)=w.
\]
A similar calculation as in the proof of Lemma~\ref{lem:calc_1} yields
    \[
        \int^{t}_0 \frac{2}{ic\left(2-icf^{\widetilde{\lambda}_c}_s(w)\right)} + \frac{i}{c} df^{\widetilde{\lambda}_c}_s(w) = t, \qquad \text{ with } f^{\widetilde{\lambda}_c}_0(w) = w.
    \]
Now take $w=-iz$ and integrate the above shows that for $0<t<T^{\widetilde{\lambda}_c}_{-iz}$ one has
    \[
        2\log(2-icf^{\widetilde{\lambda}_c}_t(-iz)) + icf^{\widetilde{\lambda}_c}_t(-iz) = c^2t + 2\log(2-cz)+cz.
    \]
As $t \uparrow T^{\widetilde{\lambda}_c}_{-iz}$, by the remarks of Definition~\ref{Def:complex_loewner_chains} we have
    \begin{equation}\label{eq:FE_2}
        cz+2\log(2-cz) = 2\log(2) - c^2 T^{\widetilde{\lambda}_c}_{-iz}.
    \end{equation}
Combining (\ref{eq:FE_1}) with (\ref{eq:FE_2}) we have
\[
c^2 \left(T^{\lambda_c}_z + T^{\widetilde{\lambda}_c}_{-iz}\right) = 2\pi i \left(\ell^{\lambda_c}(T_z) - \ell^{\widetilde{\lambda}_c}(T_{-iz})\right),
\]
where both constants denote the unique integer such that the pioneer point $z$ (resp. $-iz$ solves the imaginary part of its corresponding pioneer equation at time $T^{\lambda_c}_z$ (resp. $T^{\widetilde{\lambda}_c}_{-iz}$) as explained in the remarks following Lemma~\ref{lem:calc_1}. Consequently, the difference $\ell^{\lambda_c}(T_z)-\ell^{\widetilde{\lambda}_c}(T_{-iz})$ is a fixed integer constant. Using $c \in \Omega_{+} \cup \Omega_{-}$, the left-hand side has a strict positive real part, which independently of the constant on the right-hand side forces $T^{\lambda}_{z}=T^{\widetilde{\lambda}_c}_{-iz}=0$. But $L_0(\lambda_c) = L_0(\widetilde{\lambda}_c)=\{0\}$, so $z=0$ and this finishes the proof.
\end{proof}
Now that we know that for $c \in \Omega_{+} \cup \Omega_-$, the driver $\lambda_c$ creates a a left hull given by a two-sided pioneer curve, we can use the pioneer equation to calculate its asymptotic growth.
\begin{lem}\label{lem:3}
For any $c \in \Omega_{+} \cup \Omega_{-}$, define $\lambda_c \colon [0,\infty) \to \C$ by setting $\lambda_c(t)=ct$, denote by $\gamma_c$ the two-sided pioneer curve created by $\lambda_c$. Consider $\mathcal{R}_{\pm}(t):=\re(\gamma_c(\pm t))/t$ and $\mathcal{I}_{\pm}(t):=\im(\gamma_c(\pm t))/t$, then the positive (resp. negative) time parts of $\gamma_c$ satisfies the following asymptotic behavior.
\begin{enumerate}
    \item\label{lem:3.1} For $c \in \Omega_{+}$ the real and imaginary part of the positive (resp. negative) time parts of the created curve grow asymptotically linearly with rates given by:
    \begin{align*}
         \lim_{t \uparrow \infty} \mathcal{R}_{\pm}(t) = \frac{\re(c^2)\,\re(c)+\im(c)\,\im(c^2)}{|c|^2}, \qquad \lim_{t \uparrow \infty} \mathcal{I}_{\pm}(t) = \frac{\re(c) \, \im(c^2)-\re(c^2) \,\im(c)}{|c|^2}. \\
    \end{align*}
    \item\label{lem:3.2} For $c \in \Omega_{-}$ the negative time part of the curve spirals linearly counterclockwise into $2/c$, we have
    \[
    \lim_{t \to \infty} \gamma_c(-t) = 2/c, \qquad \text{ and } \qquad \lim_{t \to \infty} \frac{2\ARG(2-\gamma_c(-t))+2\pi \ell(\gamma_c(-t))}{t} = \im(c^2).
    \]
\end{enumerate}
\end{lem}
\begin{proof}
By the previous lemma, we already know that $\lambda_c$ creates a two-sided pioneer curve in the sense of Definition~\ref{Def:Two-sided-pioneercurve} with a positive (resp. negative) time pioneer point $\gamma(\pm t)$ at each time $t \geq 0$.

Fix $c \in \Omega_{+}$ and consider the real and imaginary part of the pioneer equation~(\ref{eq:singularities_curve}) given by
    \begin{align}
    \re(c) \re(\gamma_c(\pm t)) - \im(\gamma_c(\pm t)) \im(c) + 2 \log\left(|2-c\gamma_c(\pm t)|\right) &= 2\log(2) + \re(c^2) t \tag{RE}\label{eq:RE},\\
    \re(c) \im(\gamma_c(\pm t)) + \im(c) \re(\gamma_c(\pm t)) + 2 \arg\left(2-c \gamma_c(\pm t)\right) &= \im(c^2) t, \tag{IM}\label{eq:IM}
    \end{align}  
    where $\arg(2-c\gamma_c(\pm t)) = \ARG(2-c\gamma_c(\pm t)) + 2\pi \, \ell(\gamma_c(\pm t))$ as explained in the remarks following Lemma~\ref{lem:calc_1}.
We are interested in the limits of the following expressions
\begin{equation}\label{eq:limits}
    \mathcal{L}_{\pm}(t):=\frac{\log(|2-c\gamma_c(\pm t)|}{t}, \qquad \mathcal{R}_{\pm\infty}(t):=\frac{\re(\gamma_c(\pm t))}{t},  \qquad \text{ and } \qquad  \mathcal{I}_{\pm \infty}(t):= \frac{\im(\gamma_c(\pm t))}{t}.
\end{equation}
A calculation carried out in the appendix, specifically Lemma\autoref{lem:calculation_asympt}-(\ref{item:calculation_asympt_1}) shows that if $c \in \Omega_{+}$, then $\mathcal{L}_{\pm}(t) \to 0$ as $t \uparrow \infty$, while the limit for the second and third expression exists and is nonzero. Now as $t \uparrow \infty$, using~(\ref{eq:RE}) and~(\ref{eq:IM}) we get the following system of equations
\begin{align*}
        \re(c) \mathcal{R}_{\pm \infty} - \im(c) \mathcal{I}_{\pm \infty} &= \re(c^2),\\
        \re(c) \mathcal{I}_{\pm \infty} + \im(c) \mathcal{R}_{\pm \infty} &=\im(c^2).
\end{align*}
We observe that $\mathcal{R}_{\infty}$ (resp. $\mathcal{I}_{\infty}$) agrees with $\mathcal{R}_{-\infty}$ (resp. $\mathcal{I}_{-\infty}$) and solving the above system finishes the first part. For the second claim, fix $c \in \Omega_{-}$ and observe that the right hand side of~(\ref{eq:RE}) now diverges to $-\infty$ as $t \uparrow \infty$, so that the left hand side of~(\ref{eq:RE}) needs to diverge to $-\infty$ as well. By another calculation carried out in the appendix, specifically Lemma\autoref{lem:calculation_asympt}-(\ref{item:calculation_asympt_2}), we know that $\im(\gamma_c(-t)) \leq 0$, for large enough $t \geq 0$. Note that $\im(c)>0$ by $c \in \Omega_{-}$ and using the fact that $\re(\gamma_c(-t)) \geq 0$ by the remarks of Definition~\ref{Def:Left/Right_hulls}, so that the first two terms on the left hand side of~(\ref{eq:RE}) are strictly positive. Since the right hand side diverges to $-\infty$ for $t \uparrow \infty$, the first limit now follows. Using~(\ref{eq:IM}) we get the second limit and this finishes the proof.
\end{proof}
We are now ready to prove the first part of our main theorem.
\begin{proof}[Proof of Theorem~\ref{item:classification_nonzero}]
    Fix $c \in \Omega_{+} \cup \Omega_{-}$, by Lemma~\ref{lem:construction_hull_simple} the driving function $\lambda_c$ creates a left hull that is given by a two-sided pioneer curve in the sense of Definition~\ref{Def:Two-sided-pioneercurve}. Applying Lemma~\ref{lem:3} calculates the asymptotic linear growth of the real (resp. imaginary) part of the positive and negative time part of the created curve $\gamma_c$. This finishes the proof of Theorem~\ref{item:classification_nonzero}.
\end{proof}
We just finished the geometric classification for $c \in \Omega_{+} \cup \Omega_{-}$. In the next section, we deal with the exotic region $\Omega_{0}$ and prove Theorem~\ref{item:classification_zero}.

\subsection{Exotic phase}\label{Section:5.2}

This section is dedicated to the proof of Theorem~\ref{item:classification_zero}. Analogously to the simple, one-sided spiraling case $c \in \Omega_{+} \cup \Omega_{-}$, the first step is to show that for each $c \in \Omega_0$, the driver $\lambda_c$ creates a left hull that is given by a two-sided pioneer curve up to time $t$, where $0 \leq t < 4\pi/\im(c^2)$. The more delicate part is to show that for $t \geq 4\pi / \im(c^2)$ the left hull stops growing in its negative time part and only grows in its positive time curve part. The strategy for the proof of Theorem~\ref{item:classification_zero} can be summarized as follows.
\begin{itemize}
    \item Recall Corollary~\ref{cor:1}: For each $T \geq 0$, such that $L_{t,t+s} \cap R_t = \{\lambda_c(t)\}$ for each $0 \leq t \leq t+s \leq T$, we know that $\lambda_c$ creates a left hull given by a two-sided pioneer curve in the sense of Definition~\ref{Def:Two-sided-pioneercurve}. As a first step, we prove in Lemma~\ref{lem:bound_tau} that $\lambda_c$ violates the above criterion for the first time at some time $\tau \geq 4\pi/\im(c^2)$. Consequently, for all times smaller than $4\pi/\im(c^2)$, the driver $\lambda_c$ creates a two-sided pioneer curve, establishing Theorem~\ref{subitem_zero:a}.    
    \item However, in contrast to the real-driven chordal case: Example~\ref{ex:counterexample} shows that the criterion of Corollary~\ref{cor:1} is only a sufficient condition to show that a driver creates a two-sided curve, which is simple. As a result, we cannot directly deduce that $4\pi/\im(c^2)$ is the smallest time, when the created two-sided curve is non-simple. A major part of this section is dedicated to this task.
    \item Under the assumption that $\lambda_c$ creates a positive time curve, in Lemma~\ref{lem:upper_part_simple} we prove that the positive time part is a simple curve contained in the upper half-plane, while starting at the origin.
    \item In Lemma~\ref{lem:100}, we prove that up to time $4\pi/\im(c^2)$, the negative time part satisfies a specific reflection principle. The reflection principle in combination with Lemma~\ref{lem:root_zero} is going to imply that the negative time part of $\gamma_c$ up to $4\pi/\im(c^2)$ is a Jordan curve rooted in the origin, which disconnects $2/c$ from infinity.
    \item Using the same reflection principle, by applying a self-similarity argument specific to the complex linear driver, we prove that the increment of the left hull after time $4\pi/\im(c^2)$ is only given by a positive time curve part.
\end{itemize}

\textbf{Positive time part.} First, assuming that $\lambda_c$ creates a positive-time curve in the sense of Definition~\ref{Def:Two-sidedcurve}, we show that it is a simple curve in the upper half-plane.
\begin{lem}\label{lem:upper_part_simple}
    For each $c \in \Omega_0$, define $\lambda_c$ by setting $\lambda_c(t):=ct$. Fix $c \in \Omega_0$ and suppose that $\lambda_c$ creates a left hull given by a two-sided curve up to times $(a,T)$, where $a,T$ are both positive. Denote by $\gamma^{+}_c$ the positive time part up to time $T$. Then $\gamma^{+}_c$ is a simple curve contained in the upper half-plane for strictly positive times.
\end{lem}
\begin{proof}
    To prove that $\gamma^{+}_c$ is simple, it suffices to show that for each $0 \leq s \leq T$ the positive time curve $\gamma^{+}_{T-s}$ is contained in upper half-plane for all strict positive times. Indeed, by Corollary~\ref{cor:1} the positive time curve $\gamma^{+}_{c}$ is simple if $\gamma^{+}_{s,T} \cap R_s = \{\lambda(s)\}$. Recall that by the translation property~(\ref{trans:0}) we have $L_{s,T}=cs+L_{T-s}$. Furthermore, by the remarks of Definition~\ref{Def:Left/Right_hulls}, the right hull is contained in the imaginary strip $R_s \subseteq \{z \in \mathbb C \, \colon \, 0 \leq \im(z) \leq \im(c) s\}$.  Consequently, since $\lambda_c$ creates a positive time curve, it suffices to show that for each $0 \leq s \leq T$ the curve $\gamma^{+}_{T-s}$ is contained in the upper half-plane for strict positive times.

    Fix $w \in \gamma^{+}_{T-s} \setminus \{\lambda(0)\}$, since $\lambda_c$ creates a two-sided curve in the sense of Definition~\ref{Def:Two-sided-pioneercurve}, the point $w$ is given by the top frontier limit
    \[
    w = \lim_{\epsilon \downarrow 0} g^{-1}_t\left(i\epsilon+\lambda(t)\right),
    \]
    for some $t \in (0,T-s]$. Using the reverse Loewner equation, we prove that $\im(w)>0$ by establishing $\im(w) \geq \im(c) t$.

    Fix $\epsilon > 0$, denote by $z:=i\epsilon+\lambda(t)$ and consider the reverse Loewner equation given by 
    \[
    \partial_u h_u(z) = \frac{-2}{h_u(z)-\lambda(t-u)}, \qquad h_0(z) = z,
    \]
    where $u \in [0,t]$. Observe that $\im(z)=\epsilon + \im(c)t>0$ by $\im(c)>0$ and $\epsilon>0$, since we take a top frontier limit. By the time reversal property stated in Lemma~\ref{lem:time_reversal} we know that $g^{-1}_{t}(z) = h_{t}(z)$ and so it suffices to bound $\im(h_t(z))$. Differentiating with respect to $u$, using $\im(c)>0$ and taking imaginary parts yields
    \[
     \partial_u \im(h_u(z)) = \frac{2\,\im\left(h_u(z)-\lambda(t-u)\right)}{|h_u(z)-\lambda(t-u)|^2} \geq \frac{2 \,\im\left(h_u(z)-tc\right)}{|h_u(z)-\lambda(t-u)|^2},
    \]
    for all $u \in [0,t]$. Now using $\im(z)>\im(c)t$, we see that $\partial_u \im(h_u(z))\big\vert_{u=0} > 0$ and since the above inequality holds uniformly in $u \in [0,t]$, we conclude that
    \[
    \im(h_u(z)) \geq \im(h_0(z)) > \epsilon+\im(c)t,
    \]
    for all $u \in [0,t]$. Applying $g^{-1}_t(z)=h_t(z)$ we have 
    \[
    \im\left(g^{-1}_t(z)\right) > \epsilon+\im(c)t,
    \]
    for each $\epsilon>0$ and so 
    \[
    \im(w) = \lim_{\epsilon \downarrow 0} \im\left(g^{-1}_t(z)\right) \geq \im(c)t.
    \]
    This proves $\im(w) \geq \im(c) t$ for each $t \in (0,T-s]$, and so $\gamma^{+}_{T-s}$ is contained in the upper half-plane for strictly positive times. This finishes the proof.
\end{proof}
We note that by combining this result with the remarks following Definition~\ref{Def:Left/Right_hulls}, it follows that any possible created positive-time curve is contained in the first quadrant for strict positive times.\newline

\textbf{Negative time part.} Now we turn our attention to the negative time part. For each $c \in \Omega_0$, the pioneer equation~(\ref{eq:singularities_curve}) is going to play a major role in the proof of Lemma~\ref{lem:bound_tau}. Consequently, under the condition that $\lambda_c$ creates a two-sided pioneer curve, we are interested to see whether the created curve crosses the branch cut given by the fourth diagonal emitted from $2/c$ as illustrated in Figure~\ref{figure:2}. The next lemma proves that the negative time part crosses the cut at time $2\pi/\im(c^2)$ and that up to time $4\pi/\im(c^2)$ there are no more crossings.
\begin{lem}\label{lem:branch_cut_calc}
    Fix $c \in \Omega_0$ and suppose that $\lambda_c$ creates a two-sided pioneer curve $\gamma_c$ up to some positive time $T$. Denote by $t_{\text{cut}}:=\inf\{0 \leq t <T \, \colon \, \re(\gamma_c(-t)) = -\im(\gamma_c(-t)) \, \text{ and } |\gamma_c(-t)| \geq 2/|c|\}$ the first time that the negative time part of $\gamma_c$ touches the branch cut of $\arg(2-cz)$. If $T \geq 2\pi/\im(c^2)$, then the following statements are true.
    \begin{enumerate}
        \item The first time $\gamma_c$ touches the branch cut is given by $t_{\text{cut}}=2\pi/\im(c^2)$.
        \item For all $t_{\text{cut}} \leq t < 4\pi/\im(c^2)$, one has $\ell(\gamma_c(-t)) = 1$.
    \end{enumerate}
\end{lem}
Before starting the proof, we want to remind the reader that since the positive time part is contained in the upper half-plane for strict positive time by Lemma~\ref{lem:upper_part_simple}, it is only possible for the negative time part of $\gamma_c$ to cross the branch cut.
\begin{proof}[Proof of Lemma~\ref{lem:branch_cut_calc}]
    Suppose that $\lambda_c$ creates a two-sided curve up to a positive time $T \geq 2\pi/\im(c^2)$. By using $c \in \Omega_0$ we have $\re(c)=\im(c)$, so the imaginary part of the pioneer equation~(\ref{eq:IM}) for the negative time pioneer reads
    \[
    \re(c)\left(\im(\gamma_c(-t))+\re(\gamma_c(-t))\right) + 2 \ARG(2-c\gamma_c(-t)) + 4\pi \ell(\gamma_c(-t)) = \im(c^2) t.
    \]
    Using the fact that $\gamma_c$ is pioneer, we are going to prove that $2\pi/\im(c^2)$ is the first time the negative time part of $\gamma_c$ crosses the branch cut.

    Consider any $t \geq 0$ such that $\re(\gamma_c(-t))=-\im(\gamma_c(-t))$, a quick algebraic manipulation of the above equation gives $t=2\pi(-1+2\ell(\gamma_c(-t)))/\im(c^2)$, where $\ell(\gamma_c(-t))\geq 1$. Since by assumption the curve $\gamma_c$ is pioneer, there exists a unique negative time point $\gamma_c(-2\pi/\im(c^2))$ that satisfies the imaginary part of the pioneer equation. So, if we fix $t=2\pi/\im(c^2)$, then the negative time solution of the above equation at time $2\pi/\im(c^2)$ is unique. Consequently, by the above calculation we have $\re(\gamma_c(-2\pi/\im(c^2))=-\im(\gamma_c(2\pi/\im(c^2))$ and $\ell(\gamma_c(-2\pi/\im(c^2))=1$ so that $t_{\text{cut}}=2\pi/\im(c^2)$.

    For the second claim, checking the imaginary part of the frontier equation, we see that for $t>t_{\text{cut}}$ a positive-negative crossing of the argument function $\ARG(2-c\gamma_c(-t))$ by the negative time part of the curve, resulting in $\ell(\gamma_c(-t))=0$ while $\ell(\gamma_c(-t_{\text{cut}}))=1$, is impossible. Furthermore, by the remarks of Definition~\ref{Def:Left/Right_hulls} we have $\re(\gamma_c(-t)) \geq 0$, so that $\ell(\gamma_c(-t)) \geq 2$ and continuity imply that there exists some time instant $t_{\text{cut}} < t_{\star} < t$, such that the associated negative time pioneer point satisfies $\re(\gamma_c(-t_{\star}))=-\im(\gamma_c(-t_{\star}))$. Using the calculation above, the earliest time after $2\pi/\im(c^2)$ for such a time instant is given by $t_{\star}=4\pi/\im(c^2)$. This finishes the proof.
\end{proof}
Next, we prove that the first time instant at which the negative time part is non-simple must be a revisit of the origin.
\begin{lem}\label{lem:root_zero}
    Suppose that $\lambda_c$ creates a two-sided curve $\gamma_c$ up to a positive time $T$ and denote by $t_{\ast}(\mathrm{r}):=\inf\{0 < t\leq T \, \colon \, \gamma(-t) = \gamma(-t_\mathrm{r}) \; \text{ for some } 0\leq t_{\mathrm{r}} < t\}$ the first revisiting time of the negative time part of $\gamma_c$. If there exists some $\mathrm{r} \in \mathbb C$ such that $t_{\ast}(\mathrm{r})$ is finite, then $\mathrm{r}=0$.
\end{lem}
The idea of the proof is to apply the self-similarity up translation property of the complex linear driver.
\begin{proof}[Proof of Lemma~\ref{lem:root_zero}]
By assumption, there exists some $\mathrm{r} \in \mathbb C$, such that $\gamma(-t_{\mathrm{r}})=\gamma(-t_{\ast}(\mathrm{r))}$, where $t_{\ast}(r)$ is finite. Suppose that $t_{\mathrm{r}}>0$ and choose $0 < \varepsilon < t_{\mathrm{r}}$, so that by applying~(\ref{concat:1}) and the translation property~(\ref{trans:0}) we have
\[
    g_{t_{\mathrm{r}}-\varepsilon}(L_{t_{\ast}} \setminus L_{t_{\mathrm{r}}-\varepsilon}) = L_{t_{\mathrm{r}}-\varepsilon,t_{\ast}} \setminus R_{t_{\mathrm{r}}-\varepsilon} = \big((t_{\mathrm{r}}-\varepsilon)+ L_{t_{\ast}-t_{\mathrm{r}}+\varepsilon} \big) \setminus R_{t_{\mathrm{r}}-\varepsilon}.
\]
By our choice of $\varepsilon > 0$ we have $0<t_{\ast}(\mathrm{r})-t_{\mathrm{r}}+\varepsilon<t_{\ast}(\mathrm{r})$. But then the left-hand side is the image of a non-simple curve under a conformal map while the right-hand side contains at most a simple curve by definition of $t_{\ast}(\mathrm{r})$ and our choice of $\varepsilon$. This is impossible, hence $t_{\mathrm{r}}=0$ and $\mathrm{r}=0$ and this finishes the first part.
\end{proof}
So far we have assumed that $\lambda_c$ creates a left hull given by a two-sided curve up to some time $T \geq 0$. We now show that the earliest time at which $\lambda_c$ can create a non-pioneer two-sided curve is given by $4\pi/\im(c^2)$. Before stating the lemma, recall that $t_{\text{cut}}$ denotes the smallest time at which a possibly created two-sided pioneer curve touches the branch cut shown in Figure~\ref{figure:2}.
\begin{lem}\label{lem:bound_tau}
    For each $c \in \Omega_0$, denote by $\widetilde{\lambda}_c(t):= ict$ and let $\tau$ be the smallest positive time such that $L_{t,t+s} \cap \left(R_t \setminus \{\lambda_c(t)\}\right)$ is nonempty for some $0 \leq t \leq t+s \leq \tau$. Then for each $c \in \Omega_0$, we have $\tau \geq 4\pi/\im(c^2)$.
\end{lem}
\begin{proof}
    Fix some $0 \leq t \leq t+s < \infty$, using $c \in \Omega_0$ we first simplify the expression $L_{t,t+s} \cap R_t = \{\lambda_c\}$. Recall that $\lambda_c(t+\cdot)=ct+\lambda_c$ so by the translation property~(\ref{trans:0}), we have $L_{t,t+s} = ct + L_s$. By the duality principle~(\ref{duality:2}), the right hull satisfies $R_t = ct+iL_t(\widetilde{\lambda}_c)$. Using $c \in \Omega_0$ we further have $\widetilde{\lambda}_c=-\lambda^{\ast}_c$, so by applying our reflection principles, the right hull can be rewritten as $R_t=ct-iL^{\ast}_t$. We now want to lower bound the smallest time $\tau$ such that $L_s \cap -iL^{\ast}_t$ contains a non-zero point $z$ for some $0 \leq t \leq t+s \leq \tau$. We prove that if there such a non-zero point $z$ in the above intersection, then $t+s \geq 4\pi/\im(c^2)$.

    Let $z$ be such a point, then by above $z$ (resp. $-iz^{\ast}$) is a pioneer point at some time $0<T_z \leq s$ (resp. $0<T_{-iz^{\ast}} \leq t$), so that it suffices to  prove $T_z + T_{-iz^{\ast}} \geq 4\pi/\im(c^2)$.

    At time $T_z$ the point $z$ satisfies the pioneer equation~(\ref{eq:singularities_curve}) given by
    \[
    c z + 2 \log(|2-cz|) + 2\ARG(2-cz) + 2\pi i \, \ell(z)=2\log(2) + c^2 T_{z},
    \]
    where we emphasize that the constant $2\pi$ instead of $4\pi$ in front of $\ell(z)$ is the correct one, since at this stage of the proof we don't know yet that the left hull is generated by a curve, hence the pioneer point could solve the imaginary part of the pioneer equation on any possible branch of the argument function as explained in the remarks of Lemma~\ref{lem:calc_1}.
    At time $T_{-iz^{\ast}}$ the point $-iz^{\ast}$ satisfies
    \[
    -ic z^{\ast} + 2\log(|2+icz^{\ast}|) + 2\ARG(2+icz^{\ast}) + 2\pi i \, \ell(-iz^{\ast}) = 2\log(2)+c^2 T_{-iz^{\ast}}.
    \]
    Observe that $c \in \Omega_0$ implies that $-ic = c^{\ast}$ and $-c^2=(c^2)^{\ast}$ and so taking the complex conjugate gives
    \[
    cz + 2\log(|2-cz|) +2\ARG(2-cz) -2\pi i \ell(-iz^{\ast}) = 2\log(2) - c^2  T_{-iz^{\ast}}.
    \]
    Subtracting the first equation from the second one and taking the imaginary part gives
    \[
    \im(c^2) (T_z+T_{-iz^{\ast}}) = 2\pi \left(\ell(z) + \ell(-iz^{\ast})\right),
    \]
    where $\ell(z)+\ell(-iz^{\ast})$ is a sum of two integer constants. We first show that $T_z \geq t_{\text{cut}}=2\pi/\im(c^2)$ and then prove that $T_z+T_{-iz} \geq 4\pi/\im(c^2)$.

    Since $\im(c^2)>0$ the left hand side is positive, so is the right hand side. If the right hand side is zero, then $T_{z}+T_{-iz^{\ast}}=0$ hence $z=0$, so the right hand side above must be at least $2\pi$. This gives $\tau \geq 2\pi/\im(c^2)$, so it is now legitimate to say that $\lambda_c$ creates a left hull given by a two-sided pioneer curve until some time greater than $2\pi/\im(c^2)$ by using Corollary~\ref{cor:1}. It is now legitimate to write $\ell(z)=\ell(\gamma_c(\pm T_z))$.
    If $\tau \geq 4\pi/\im(c^2)$ we are done, otherwise by the previous lemma we get the following equality
    \[
    2\pi \left(1+\ell(-iz^{\ast})\right) \geq 2\pi \left(\ell(\gamma_c(\pm T_z)) + \ell(-iz^{\ast})\right) = \im(c^2) \left(T_z + T_{iz^{\ast}}\right).
    \]
    We have $z \in L_{T_z} \cap -iL^{\ast}_{T_z} \setminus \{\lambda(T_z)\}$, where $z$ is nonzero. Recall that up to time $T_z$ the left hull is given by a two-sided pioneer curve. By continuity and $\re(\gamma_c( \pm t)) \geq 0$, there must be some time $0 <t_{\star} \leq T_z$ and a corresponding pioneer point such that $\re(\gamma_c(\pm t_{\star}))=-\im(\gamma_c(\pm t_{\star}))$. Using the fact that the positive time part is contained in the first quadrant by Lemma~\ref{lem:upper_part_simple}, this is only possible for a negative time pioneer point. We still need to rule out that $|\gamma_c(-t_{\star})|<2/c$, i.e. not crossing the branch cut, resulting in $\ell(\gamma_c(-t_{\star}))=0$ but using the imaginary part of the pioneer equation it is easy to see that this is impossible. As a result it must be that $z$ is a negative time pioneer point with
    $\ell(\gamma_c(-T_z)) = 1$ and so $T_z \geq t_{\text{cut}}=2\pi/\im(c^2)$.

    We now prove that $T_z+T_{-iz} \geq 4\pi/\im(c^2)$, using $T_z \geq t_{\text{cut}}$ and Lemma~\ref{lem:branch_cut_calc} we have
    \[
    2\pi \left(1+\ell(-iz^{\ast})\right) = 2\pi \left(\ell(\gamma_c(-T_z)) + \ell(-iz^{\ast})\right) = \im(c^2) \left(T_z + T_{iz^{\ast}}\right) \geq 2\pi + T_{-iz}.
    \]
    But now by using the above inequality, it must be that $\ell(-iz^{\ast})$ is also non-negative integer constant. If $\ell(-iz^{\ast})=0$, then this implies that $T_{-iz^{\ast}}=0$, hence $z=0$ and so $\ell(-iz^{\ast})$ must be at least $1$, so that $4\pi/\im(c^2) \leq T_z+T_{-iz^{\ast}} \leq \tau$. This finishes the proof.
\end{proof}
We now know that $\lambda_c$ creates a two-sided curve up to time $4\pi/\im(c^2)$ in the sense of Definition~\ref{Def:Two-sided-pioneercurve}. The next Lemma, establishes that the negative time part of the created curve satisfies a specific reflection property, while the positive time part does not.
\begin{lem}\label{lem:100}
    For each $c \in \Omega_0$, denote by $\gamma^{+}_T$ (resp. $\gamma^{-}_T$) the positive (resp. negative) time part of the left hull created by $\lambda_c$ up to time $T$ and let $t_{\ast}=4\pi/\im(c^2)$. Then the following statements about the positive and negative time parts of the curve are true.
    \begin{enumerate}
        \item\label{100.1} For each $0 \leq t \leq t_{\ast}$, the negative time part satisfies the reflection property $-i\gamma^{\ast}_{c}\left(-(t_{\ast}-t)\right)=\gamma_c(-t)$.
        \item\label{100.2} For any $T>0$, the positive time part satisfies $\gamma^{+}_T \cap -i(\gamma^{+}_T)^{\ast}=\{0\}$.
    \end{enumerate}
\end{lem}
Before starting the proof, we advise the reader to check Figure~\ref{fig:rotation} for an illustration of the stated reflection property.
\begin{figure}[h]
    \centering
    \includegraphics[width=0.8\linewidth]{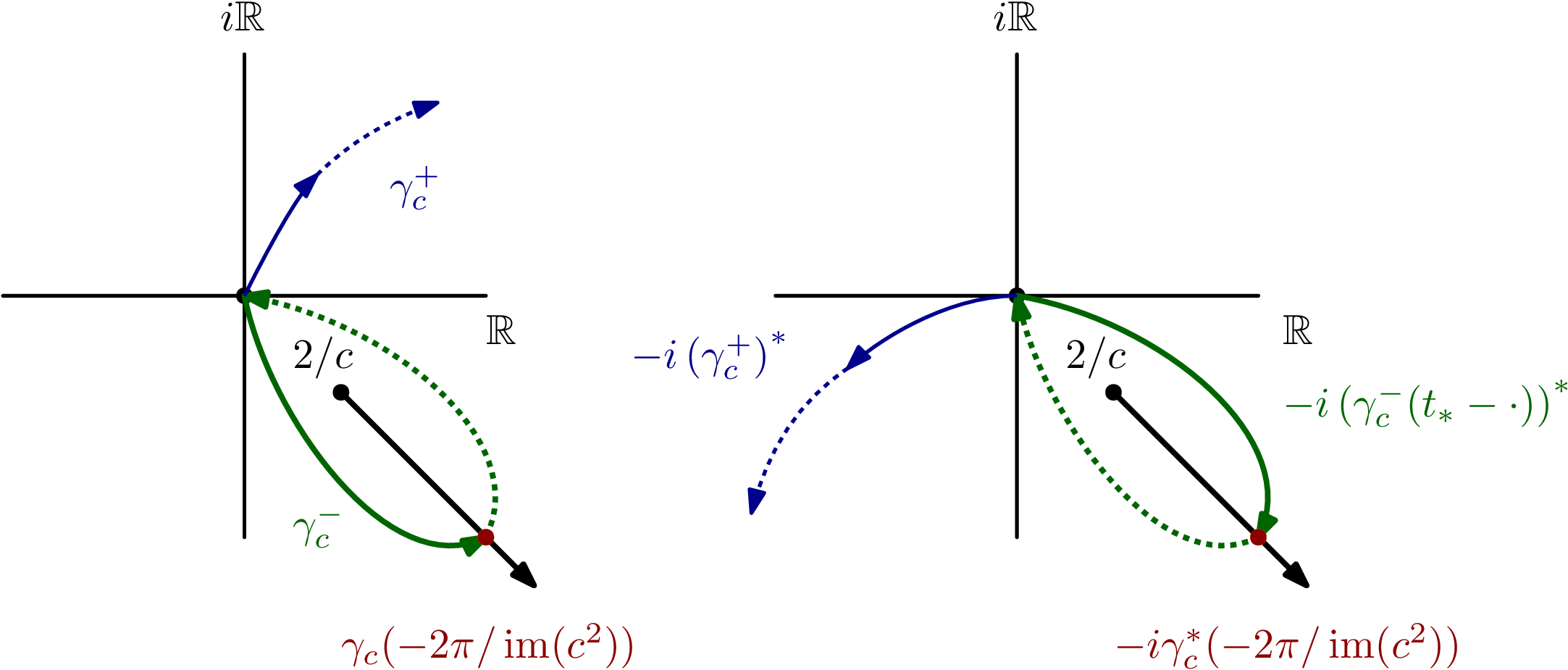}
    \caption{\textbf{Left Side:} The created two-sided curve up to time $4\pi/\im(c^2)$. The positive time part $\gamma^{+}_c$ (\textcolor{blue}{blue and dashed blue}) is contained in the first quadrant by Lemma~\ref{lem:upper_part_simple} and the remarks of Definition~\ref{Def:Left/Right_hulls}. By Lemma~\ref{lem:branch_cut_calc} the negative time part $\gamma^{-}_c$ (\textcolor{ForestGreen}{green}) crosses the branch cut at $\gamma_c(-2\pi/\im(c^2))$ (\textcolor{red}{red}) and afterwards completes a loop rooted in the origin until time $4\pi/\im(c^2)$ (\textcolor{ForestGreen}{dashed green}).\newline
    \textbf{Right side:} The positive time part under the transformation $\gamma^{+}_c \mapsto -i(\gamma^{+}_c)^{\ast}$ (\textcolor{blue}{blue and dashed blue}) and the negative time part under the transformation $\gamma_c(-t) \mapsto -i\gamma^{\ast}_c(-\left(t_{\ast}-t)\right)$ (\textcolor{ForestGreen}{green and dashed green}) up to time $4\pi/\im(c^2)$. The transformation only preserves the negative time part up to time $4\pi/\im(c^2)$ by reversing the original orientation as stated in Lemma~\ref{lem:100}.}
    \label{fig:rotation}
\end{figure}
\begin{proof}[Proof of Lemma~\ref{lem:100}]
 We start with the reflection symmetry of the negative time part given in~(\ref{100.1}). Fix $0 < t < t_{\ast}$, using the pioneer equation~(\ref{eq:singularities_curve}) we prove that $-i\gamma^{\ast}_c(-(t_{\ast}-t))=\gamma_c(-t)$. We simplify our notation by fixing $w:=\gamma_c(-(t_{\ast}-t))$ and $z:=\gamma_c(-t)$. We want to prove that $-iw^{\ast}=z$. Without loss of generality, take $0<t \leq t_{\ast}/2$ so that the point $w$ satisfies the pioneer equation at time $t_{\ast}-t$ given by
    \begin{equation}\label{eq:pioneer_w}
        cw+2\log(2-cw)+4\pi i = 2\log(2) + c^2 (t_{\ast}-t).
    \end{equation}
    The additional factor must be $4\pi i$, because $\lambda_c$ creates curve up to time $t_{\ast}$ where the condition $t_{\ast}-t \geq 2\pi/\im(c^2)=t_{\text{cut}}$ implies that the curve associated to $w$, already crossed the branch cut as stated in Lemma~\ref{lem:branch_cut_calc}. We prove that $-iw^{\ast}$ satisfies the pioneer equation of $z$ at time $t$, which is given by
    \[
    cz+ 2\log(2-cz) = 2 \log(2) + c^2 t,
    \]
    where the additional constant satisfies $\ell(z)=0$ since the curve associated to $z$ has not crossed the branch cut since $t<t_{\ast}/2=t_{\text{cut}}$. Taking the complex conjugate of~(\ref{eq:pioneer_w}) gives
    \[
    c^{\ast} w^{\ast} + 2\log(2-cw)^{\ast} - 4\pi i= 2\log(2)+(c^2)^{\ast} (t_{\ast}-t),
    \]
    By $c \in \Omega_0$, we have $c^{\ast}=-ic$ and $(c^2)^{\ast}=-c^2$ so that
    \[
    -icw^{\ast} + 2\log(2+icw^{\ast}) - 4\pi i = 2 \log(2) - c^2 (t_{\ast}-t).
    \]
    Observe that $-c^2 t_{\ast} = -4\pi i$, so that
    \[
    -icw^{\ast} + 2\log(2+icw^{\ast}) = 2 \log(2) + c^2 t.
    \]
    This is exactly the pioneer equation of $z$ at time $t$. Since we already know that we generate a two-sided pioneer curve up to time $t<t_{\ast}$, there is at most one negative time pioneer point at time $t$ and so $-iw^{\ast}=z$. For $t=t_{\ast}$, suppose that $\gamma_c(-t_{\ast})$ is a pioneer point, then it satisfies the pioneer equation and we can apply the same procedure as above, which shows $\gamma_c(-t_{\ast})=-i\gamma_c(0)^{\ast}=0$. As a result, $\gamma_c(-t_{\ast})$ cannot be a pioneer point, so that at time $t_{\ast}=4\pi$, the negative time curve is non-simple. Applying Lemma~\ref{lem:root_zero} proves that the curve revisits the origin. This finishes 
    the first part.

    For the second claim fix $T \geq 0$, since it is clear that the origin is an element of the intersection $\gamma^{+}_T \cap -i(\gamma^{+}_T)^{\ast}$, it is left to prove it is the only point. If not, there exist distinct $s,t \in (0,T]$ such that $\gamma^{+}_c(t) = -i(\gamma^{+}_c(s))^{\ast}$ and a quick algebraic manipulation shows that $\re(\gamma^{+}(t))=-\im\left(\gamma^{+}_c(s)\right)$. By Lemma~\ref{lem:upper_part_simple}, we have $\im(\gamma_c(s))>0$ for $s>0$, so using the fact that $\re(\gamma_c(t)) \geq 0$ for $t \geq 0$ by the remarks of Definition~\ref{Def:Left/Right_hulls}, this is impossible. This finishes the proof.
\end{proof}
So far we have shown that, up to time $4\pi/\im(c^2)$ the driver $\lambda_c$ creates a two-sided curve: its positive-time part is simple and contained in the first quadrant, while its negative time part forms a simple closed curve rooted at the origin, satisfying the reflection property stated in Lemma~\autoref{lem:100}-(\ref{100.2}). We are now ready to prove that the increment of the left hull after time $t_{\ast}=4\pi/\im(c^2)$ is solely given by a positive time curve. 
\begin{lem}\label{lem:left_hull_increment}
    For each $c  \in \Omega_0$, denote by $\gamma_c$ the two-sided pioneer curve created by $\lambda_c$ up to a positive time $T$ and let $t_{\ast}=4\pi/\im(c^2)$. For all $s \geq 0$, we have $L_{t_{\ast}+s} = \gamma_c[-t_{\ast},t_{\ast}+s]$, where the positive time part is a pioneer curve.
\end{lem}
\begin{proof}
    We already know that up to time $t_{\ast}$, the left hull is given by a two-sided curve. It suffices to prove that for each $s \geq 0$ the increment $L_{t_{\ast}+s} \setminus L_{t_{\ast}}$ only contains its positive time part and the idea is to apply a self-similarity up to translation argument of the complex linear driver. By the concatenation principle~(\ref{concat:2}) the increment of the left hull is given by
    \begin{equation}\label{eq:increment}
        L_{t_{\ast}+s} \setminus L_{t_{\ast}} = g^{-1}_{t_{\ast}}\left((ct_{\ast}+L_s\right) \setminus R_{t_{\ast}}).
    \end{equation}
    Using the duality principle~(\ref{duality:2}) we have $R_{t_{\ast}} = ct_{\ast}+iL_{t_{\ast}}(\widetilde{\lambda}_c)$. Furthermore by $c \in \Omega_0$ the driver satisfies $\widetilde{\lambda}_c = -\lambda^{\ast}_c$, so that we can rewrite the increment as
    \[
    L_{t_{\ast+s}} \setminus L_{t_{\ast}} = g^{-1}_{t_{\ast}}\left(ct_{\ast} + \big(L_s \setminus -iL^{\ast}_{t_{\ast}}\big)\right).
    \]
    First consider $0 < s \leq 4\pi/\im(c^2)$, then by the reflection property stated in Lemma~\ref{lem:100}, we have $L_s \setminus -iL^{\ast}_{t_{\ast}} = \gamma_c(0,s]$, so that
    \[
    L_{t_{\ast}+s} \setminus L_{t_{\ast}} = g^{-1}_{t_{\ast}}\left(ct_{\ast}+\gamma^+_c(0,s]\right) = g^{-1}_{t_{\ast}}\left(L^{+}_{s}(\lambda_c(t_{\ast}+\cdot)) \setminus \{\lambda_c(t_{\ast})\}\right).
    \]
    Since we want to show that the left hand side is given by a positive time curve take $\gamma_c(t_{\ast}+s):=g^{-1}_{t_{\ast}}(\gamma^{\lambda(t_{\ast}+s)}(s))$. Then by the concatenation property~(\ref{concat:0}) we have
    \[
    \gamma_c(t_{\ast}+s)=g^{-1}_{t_{\ast}}(\gamma^{\lambda(t_{\ast}+s)}(s)) = g^{-1}_{t_{\ast}}\left(\lim_{\epsilon \downarrow 0} g^{-1}_{t_{\ast},t_{\ast}+s}(i\epsilon +\lambda_c(t_{\ast}+s)\right) = \lim_{\epsilon \downarrow 0} g^{-1}_{t_{\ast}+s}(i\epsilon+\lambda_c(t_{\ast}+s)).
    \]
    So each time instance $t_{\ast}+s$ on the curve is given by a top-frontier limit of the driving function $\lambda_c$ and it follows that the increment of the left hull $L_{t_{\ast}+s} \setminus L_{t_{\ast}}$ for $0 \leq s \leq 4\pi/\im(c^2)$ is given by a positive time curve in the sense of Definition~\ref{Def:Two-sidedcurve}. The general case for an arbitrary time increment now follows by using a self-similarity argument.

    Fix $t>0$, which we rewrite as $t=kt_{\ast}+s$ for some $k \in \mathbb N$ and $0 \leq s \leq t_{\ast}$. Observe that since $L_s \subseteq L_t$ for each $s \leq t$, we can rewrite the increment as the union of disjoint sets given by
    \begin{equation}\label{eq:union}
        L_{kt_{\ast}+s} \setminus L_{t_{\ast}} = \dot\cup^{k}_{j=2} \left(L_{jt_{\ast}+s} \setminus L_{(j-1)t_{\ast}}\,\right).
    \end{equation}
    We now apply a self-similarity argument to show that for each $2 \leq j \leq k$, the increment is given by a positive time curve and consequently glue them all together.

    Using the concatenation principle~(\ref{concat:3}) for each $2 \leq j \leq k$ in the above union, we have
    \[
    L_{jt_\ast+s} \setminus L_{(j-1)t_\ast} = g^{-1}_{(j-1)t{\ast}}\left(L_{t_\ast+s} \setminus R_{jt_\ast}\right).
    \]
    Observe that using the same duality arguments as above, we have $R_{jt_\ast}=-iL^{\ast}_{jt_\ast}$. Now consider the decomposition $L_{t_\ast+s} \setminus R_{jt_\ast} = L_{t_\ast} \setminus R_{jt_\ast} \cup ( \left(L_{t_\ast+s} \setminus L_{t_\ast} \right) \setminus R_{jt_\ast} \big)$. By Lemma\autoref{lem:100} the first term is only given by a positive time pioneer curve up to time $t_{\ast}$. Using the above argument, the increment $L_{t_\ast+s} \setminus L_{t_\ast}$ is solely given by a positive time pioneer curve as well, so that we can also apply Lemma\autoref{lem:100}-(\ref{100.2}) to this part. Using the above decomposition, this proves that $L_{t_\ast+s} \setminus R_{jt_\ast}$ is only given by a positive time curve and using Lemma~\ref{lem:upper_part_simple} we conclude that the curve is simple. Repeating this argument consecutively for each $2 \leq j \leq k$ in combination with~(\ref{eq:union}) finishes the proof.
\end{proof}

We are now ready to give a proof of Theorem~\ref{item:classification_zero}.
\begin{proof}[Proof of Theorem~\ref{item:classification_zero}]
Fix $c \in \Omega_0$, then by Lemma~\ref{lem:bound_tau} the complex linear driver $\lambda_c$ creates a left hull given by a two-sided curve in the sense of Definition~\ref{Def:Two-sided-pioneercurve} up to at least time $4\pi/\im(c^2)$, where the curve is pioneer for all $t < 4\pi/\im(c^2)$. This proves Theorem~\ref{subitem_zero:a}. By Lemma~\ref{lem:upper_part_simple}, the positive time part is simple and contained in the upper half-plane. By the reflection property stated in Lemma~\ref{lem:100} and its proof the negative time part up to $4\pi/\im(c^2)$ forms a Jordan curve traversing the fourth quadrant, which is rooted at the origin. The point $2/c$ is disconnected from infinity by the Jordan curve by the Lemma~\ref{lem:branch_cut_calc} but is never added to the left hull due to Lemma~\ref{lem:2/c}. This finishes Theorem~\ref{subitem_zero:b}. Applying Lemma~\ref{lem:left_hull_increment} proves Theorem~\ref{subitem_zero:c}.
\end{proof}
This finishes the proof of Theorem~\ref{thm:classification_linear_complete}.

\subsection{Optimal Hölder constant}\label{Section:5.3}

Recall that in Proposition~\ref{Thm:Tran's Theorem}, it was shown that if $\|\lambda\|_{1/2} \leq \sigma$, where $\sigma \in (1/3,4)$ is not explicitly known, then $\lambda$ creates a left hull given by a two-sided quasiarc. Recently in~\cite{lind2022phase}, by considering drivers of the form $U_c(t):=c\sqrt{1-t}$, it was proven that $\sigma < 3.772$. Using our classification result Theorem~\ref{item:classification_zero}, we are now improving on the bounds of this constant by proving a sharper upper bound for $\sigma$. We believe this upper bound to be non-optimal.
\begin{cor}\label{lem:optimal_sigma}
    For each $c \in \Omega_0$, denote by $\gamma_c$ the two-sided curve created up to time $t_{\ast}=4\pi/\im(c^2)$. For each $c \in \Omega_0$, we have $\|\lambda_c\big\vert_{[0,t_{\ast}]}\|_{1/2}=2\sqrt{\pi}$. Consequently, the constant $\sigma$ in Theorem~\ref{Thm:Tran's Theorem} satisfies the following bounds
    \[
    1/3 < \sigma < 2 \sqrt{\pi} \approx 3.544.
    \]
\end{cor}
\begin{proof}
    Fix $c \in \Omega_0$, by Theorem~\ref{subitem_zero:b}, the first time at which the two-sided curve $\gamma_c$ is non-simple is given by $t_{\ast}=4\pi/\im(c^2)$. 
    For $T \geq 0$ a quick algebraic manipulation gives
    \begin{equation}\label{eq:100}
         \|\lambda\vert_{[0,T]}\|_{1/2} = \sup_{s,t \in [0,T]} \frac{|\lambda_c(t)-\lambda_c(s)|}{|t-s|^{1/2}} = |c| \, \sup_{t,s \in [0,T]} |t-s|^{1/2} = |c| T^{1/2}.  
    \end{equation}
    Evaluating the above at time $T=t_{\ast}$ gives $\|\lambda_c\vert_{[0,t_{\ast}]}\|_{1/2} = |c| t_{\ast}^{1/2}$ and using $c_0 \in \Omega_0$ gives
    \[
    \|\lambda_c\vert_{[0,t_{\ast}]}\|_{1/2} = |c| \, \sqrt{t_{\ast}} = 2 \sqrt{\pi} \approx 3.5444.
    \]
    Observe that we just have proven that $\|\lambda_c\vert_{[0,t_{\ast}]}\|_{1/2}=2\sqrt{\pi}$ is independent of the choice of $c \in \Omega_0$ and this finishes the proof.
\end{proof}
\section{Open questions}\label{Sec:Section_6}

In this section, we pose some additional questions concerning complex-driven Loewner chains. Our questions mostly concern the deterministic setup, and we refer to \cite[Section~5]{gwynne2023loewner} for further open questions on complex-driven Schramm-Loewner evolutions. \newline

\textbf{Question 1.} Is it possible to encode each simple strictly increasing two-sided curve up to rotation via a complex driving function ? 
\vspace{0.3cm}

Recall the following classical fact from~\cite[Section~4.1]{lawler2008conformally}, which allows us to encode simple chords in the upper half-plane by continuous real-valued driving functions:
Let $\gamma \colon [0,\infty) \to \overline{\mathbb{H}}$ be a simple curve emanating from the real line. Reparameterize $\gamma$ by half-plane capacity so that $\text{hcap}(\gamma_t)=2t$ for each $t \geq 0$. For each $t \ge 0$, there exists a unique conformal map $g_t \,\colon \, \mathbb H \setminus \gamma_t \to \mathbb H$ with expansion around infinity given by
\[
g_t(z) = z + \frac{2t}{z} + O(|z|^{-2}), \qquad \text{ as } z \to \infty .
\]  
Moreover, each $g_t \, \colon \, \mathbb H \setminus \gamma_t \to \mathbb H$ extends continuously to the tip $\gamma(t)$ by mapping it to a point on the real axis. As a result, one can define the driver by setting $\lambda(t) := g_t(\gamma(t))$  for each $t \geq 0$ and prove that $\lambda$ is a continuous function. Having the continuous driver at hand, it can consequently be shown that for each $z \in \mathbb{H}$, the process $(g_t(z))_{0 \le t < T_z}$ solves the chordal Loewner equation given by
\[
\partial_t g_t(z) = \frac{2}{g_t(z) - \lambda(t)}, \qquad g_0(z) = z ,
\]  
where $T_z := \inf\{ t \ge 0 \colon \gamma(t) = z \}$. Thus, a simple chord in the upper half plane parametrized by half-plane capacity can be encoded as a continuous, real-valued driving function via the Loewner equation. A natural question is whether this framework extends to simple, strictly increasing, two-sided curves in the complex plane.\newline

Specifically, let $\gamma \colon (-\infty, \infty) \to \mathbb{C}$ be such a curve. Does there exist a parameterization such that, after reparameterizing $\gamma$, one can construct a suitable family of maps $(g_t)_{t \ge 0}$ that encodes the reparameterized curve in a similar way as above?\newline

Note, that we add the additional condition of "up to rotation" since we believe that it is not possible for complex driven Loewner chains to create the two-sided curve given by $\gamma_t :=[-t,t]$ for $t \geq 0$, or a two-sided line segment, where one of the ends stays fixed for a positive amount of time.

\vspace{0.3cm}

\textbf{Question 2.} In Theorem~\ref{thm:classification_linear_complete} we have seen that it is possible that a complex-valued driver creates a two-sided curve with a two-connected complement. In the spirit of Question~1, one may ask: If there exists a parametrization that allows every strictly increasing two-sided curve in the plane to be encoded by a complex-valued driver, does the same hold for self-touching curves with multiply connected complements, for instance a Jordan curve?

\vspace{0.3cm}

\textbf{Question 3.} Generalizations of the Loewner energy to the complex-driven case.\vspace{0.3cm}

We quickly review the Loewner energy in the real-driven chordal case, which was independently introduced by~\cite{dubedat2007commutation,friz2017existence,wang2019loewner}. Denote by $C_0([0,T])$ the set of continuous functions with $\lambda(0)=0$, endowed with the $L^{\infty}$-norm. A function is called \emph{absolutely continuous}, if there exists a function $h \in L^1\left([0,T]\right)$, such that $\int^{t}_0 h(s) ds = \lambda(t)$ for each $t \geq 0$. For each $0 \leq t \leq T$, using the duality property~(\ref{duality:2}) define the \emph{Loewner energy of the left (resp. right) hull up to time $t$} by setting
\[
\mathcal{E}_t(L):= \int^{t}_0 |\dot \lambda(s)|^2 \, ds, \qquad \text{ and } \qquad \mathcal{E}_t(R):=\int^{t}_0|i\dot \lambda(t-s)|^2 \, ds,
\]
if $\lambda_t$ is absolutely continuous and infinity otherwise. Using the translation property of the left~(\ref{trans:0}) (resp. right~(\ref{trans:1})) hull, the energy of both hulls is translation invariant. Precisely speaking for each $a \in \mathbb C$ and each $t \geq 0$, we have $\mathcal{E}_t(L+a)=\mathcal{E}_t(L)$ (resp. $\mathcal{E}_t(R+a)=\mathcal{E}_t(R)$), since shifting the hull by $a$ results in shifting the driver by the same constant. Moreover, using~(\ref{scaling:0}) and~(\ref{scaling:1}), we can establish the scale invariance of the energy. For $a>0$ and each $t \geq 0$, the left hull $aL_t$ is driven by $a\lambda(\cdot/a^2)$ up to time $a^2t$, so performing a quick algebraic manipulation it is not hard to see that $\mathcal{E}_t(aL)=\mathcal{E}_{t}(L)$ and $\mathcal{E}_t(aR)=\mathcal{E}_{t}(R)$. Most importantly, using the concatenation principle~(\ref{concat:1}) we can prove that
\[
\mathcal{E}_{t+s}(L) = \mathcal{E}_{t,t+s}(L) + \mathcal{E}_{t}(R) \qquad \text{ and } \qquad \mathcal{E}_{T}(L) = \mathcal{E}_{T}(R),  
\]
where $0 \leq t \leq t+s \leq T$. In other words, under the assumption that a left hull $L$ has finite energy up to time $T$, the energy is split between the left and the right hull, such that the mapping out function $g_t \, \colon \, \mathbb C \setminus L_t \to \mathbb C \setminus R_t$ transfers the energy from the left to the right hull while conserving the total mass of energy. Now under the assumption that we can encode a two-sided chord $\gamma \, \colon \, [-T,T] \to \mathbb C$ via the a complex driver $\lambda \in C^{0}\left([0,\infty);\mathbb C\right)$ as described in Question 1, what can we say about for example the space of finite energy curves? Observe that using~\cite[Theorem~2]{lind2022phase} or Theorem~\ref{thm:classification_linear_complete}, there exist smooth driving functions with finite energy curves that are non-simple. Consequently, the space of finite energy curves already differs from the real-driven chordal case.

\vspace{0.3cm}
\textbf{Question 4.} What is the optimal constant $\sigma$ in Theorem~\ref{Thm:Tran's Theorem}?

\appendix

\section{}
We primarily provide references to theorems in the context of complex-driven Loewner chains. Lemma~\ref{lem:calculation_asympt} performs a calculation required for the proof of Lemma~\ref{lem:3}.
\begin{lem}\label{lem:regularity_mappingout}
    Let $\lambda \colon [0,\infty) \to \C$ be continuous and let $(g_t)_{t \geq 0}$ be the mapping out functions driven by $\lambda$. Let $z \in \C$ such that $T_z \leq C < \infty$ then 
    \[
    \lim_{t \uparrow T_z} |g_t(z)-\lambda(t)| = 0.
    \]
\end{lem}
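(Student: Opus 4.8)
The plan is to show that as $t \uparrow T_z$, the quantity $|g_t(z) - \lambda(t)|$ cannot stay bounded away from zero, by a direct contradiction argument exploiting the structure of the Loewner ODE. Write $f_t(z) := g_t(z) - \lambda(t)$ for the centered chain (as in the preliminaries). By definition of the blow-up time $T_z$, we have $\liminf_{s \uparrow T_z} |f_s(z)| = 0$, since otherwise the infimum in the definition of $T_z$ would not be attained there. So the real content is upgrading this $\liminf$ to a genuine limit: the distance $|f_t(z)|$ must actually converge to $0$, not merely dip down to $0$ along a subsequence. This is where one uses that $\lambda$ is (uniformly) continuous on the compact interval $[0,C]$, hence bounded, together with a quantitative estimate on how fast $|f_t(z)|$ can grow.

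**First I would** record the differential inequality for $r_t := |f_t(z)|^2 = f_t(z)\overline{f_t(z)}$. Differentiating and using the Loewner equation $\partial_t g_t(z) = 2/f_t(z)$ gives
\[
\partial_t r_t = 2\,\mathrm{Re}\!\left( \overline{f_t(z)} \left( \frac{2}{f_t(z)} - \dot\lambda(t) \right) \right) = 4 - 2\,\mathrm{Re}\big( \overline{f_t(z)}\, \dot\lambda(t) \big),
\]
but since we only assume $\lambda$ continuous (not differentiable), I would instead work with the integrated form: for $0 \le s \le t < T_z$,
\[
f_t(z) = f_s(z) + \int_s^t \frac{2}{f_u(z)}\,du - \big(\lambda(t) - \lambda(s)\big).
\]
Taking absolute values and using $|1/f_u(z)| = 1/|f_u(z)|$, together with the lower bound $|f_u(z)| \ge \varepsilon_0 := \inf_{s \le u \le t}|f_u(z)|$ valid on the subinterval, yields
\[
|f_t(z) - f_s(z)| \le \frac{2(t-s)}{\varepsilon_0} + |\lambda(t) - \lambda(s)|.
\]
The idea is then: suppose for contradiction that $|f_t(z)|$ does \emph{not} tend to $0$, i.e. there is $\delta > 0$ and a sequence $t_n \uparrow T_z$ with $|f_{t_n}(z)| \ge \delta$. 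Since $\liminf |f_s(z)| = 0$, there are also times $s_n$ with $|f_{s_n}(z)| \le \delta/2$, and we may interleave so that $s_n < t_n$ and $|f_u(z)| \ge \delta/2$ for $u \in [s_n, t_n]$ while $|f_{t_n}(z)| - |f_{s_n}(z)| \ge \delta/2$. Applying the displayed bound on $[s_n,t_n]$ with $\varepsilon_0 \ge \delta/2$ gives $\delta/2 \le \frac{4(t_n - s_n)}{\delta} + |\lambda(t_n) - \lambda(s_n)|$. As $n \to \infty$ both $s_n, t_n \to T_z$, so $t_n - s_n \to 0$ and, by uniform continuity of $\lambda$ on $[0,C]$, $|\lambda(t_n) - \lambda(s_n)| \to 0$, forcing $\delta/2 \le 0$ — a contradiction.

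**The main obstacle** is the bookkeeping in the interleaving step: one must carefully extract, from the coexistence of a subsequence where $|f|$ is large and the $\liminf$ being $0$, a sequence of subintervals $[s_n, t_n]$ on which $|f_u(z)|$ is uniformly bounded below (by $\delta/2$, say) yet experiences a definite increase (by $\delta/2$). A clean way is: fix $\delta$ as above, and for each $n$ let $t_n$ be a time close to $T_z$ with $|f_{t_n}(z)| \ge \delta$, then let $s_n := \sup\{ u < t_n : |f_u(z)| \le \delta/2 \}$, which is well-defined and $< t_n$ by continuity of $u \mapsto f_u(z)$ on $[0,T_z)$ and the fact that $\liminf_{u \uparrow T_z} |f_u(z)| = 0$ (so such $u$ exist arbitrarily close to $T_z$, in particular $s_n \uparrow T_z$). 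By construction $|f_{s_n}(z)| = \delta/2$ and $|f_u(z)| \ge \delta/2$ on $[s_n, t_n]$, which is exactly what the estimate needs. The only other point requiring a word of care is justifying $\liminf_{s \uparrow T_z} |f_s(z)| = 0$ directly from the definition of $T_z$ — but this is immediate, since $T_z = \inf\{t : \lim_{s \uparrow t}|f_s(z)| = 0\}$ together with the fact (from the local existence theory in the preliminaries) that the solution exists and stays away from $0$ on $[0, T_z)$ forces the infimum to be realized as a limit along some sequence approaching $T_z$ from below; alternatively one invokes \cite[Lemma~2.5]{gwynne2023loewner} as cited in the remarks, though the point of this appendix lemma is presumably to reprove it self-containedly.
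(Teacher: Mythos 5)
The paper does not prove this lemma; it is stated as an appendix convenience and the ``proof'' is simply a pointer to \cite[Lemma~2.5]{gwynne2023loewner}. Your proposal is therefore genuinely self-contained where the paper merely cites, and the argument is correct. The mechanism is the right one: integrate the centered Loewner equation to get
\[
f_t(z) - f_s(z) \;=\; \int_s^t \frac{2\,du}{f_u(z)} \;-\; \bigl(\lambda(t)-\lambda(s)\bigr),
\]
bound the integral by $2(t-s)/\inf_{[s,t]}|f_u|$, and then exploit that (i) the blow-up forces $\liminf_{u\uparrow T_z}|f_u(z)|=0$ by standard ODE continuation, since otherwise the vector field $2/f_u$ stays bounded and Lipschitz near $T_z$, and (ii) uniform continuity of $\lambda$ on $[0,C]$ makes $|\lambda(t_n)-\lambda(s_n)|\to 0$ along any shrinking interval. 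The interleaving step — taking $s_n := \sup\{u<t_n: |f_u(z)|\le \delta/2\}$ so that $|f_{s_n}(z)|=\delta/2$ and $|f_u(z)|\ge\delta/2$ on $[s_n,t_n]$ — is exactly the clean way to make the contradiction quantitative, and you handle it carefully, including the fact that $s_n\uparrow T_z$ so that $t_n-s_n\to 0$.

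One small point of phrasing to tighten: your first justification that $\liminf_{s\uparrow T_z}|f_s(z)|=0$ (``since otherwise the infimum in the definition of $T_z$ would not be attained there'') leans on the paper's somewhat non-standard infimum-of-a-limit-set definition of $T_z$, which is awkward precisely because whether that set contains $T_z$ is what the lemma is proving. Your second justification — via local existence/continuation theory, that the solution could be extended past $T_z$ if $|f_u(z)|$ were bounded below near $T_z$ — is the one to keep and state as the argument; it is cleaner and matches the standard interpretation of $T_z$ as the maximal existence time. With that caveat, the proposal is a valid, elementary proof of the statement.
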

\begin{proof}
    See~\cite[Lemma~2.5]{gwynne2023loewner}.
\end{proof}

\begin{prop}[Jordan curve Theorem]\label{prop:Jordan_curve}
If $\gamma$ is a simple closed curve in $\C$, then $\C \setminus \gamma$ contains of one bounded and one unbounded component, each of which has $\gamma$ as its boundary. 
\end{prop}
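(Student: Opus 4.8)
The statement is the classical Jordan curve theorem, so the plan is to follow the standard topological argument; I sketch the route here. First I would pass to the Riemann sphere $S^2 = \C \cup \{\infty\}$. Since $\gamma$ is compact we have $\infty \notin \gamma$, and the components of $S^2 \setminus \gamma$ match up with those of $\C \setminus \gamma$: the open set $S^2 \setminus \gamma$ contains a whole neighborhood of $\infty$, so exactly one component contains $\infty$ and all the others are bounded subsets of $\C$; moreover, deleting $\infty$ from the component containing it leaves it connected (a connected open subset of a surface stays connected after removing a point), and that set is precisely the unique unbounded component of $\C \setminus \gamma$. Hence it suffices to prove that $S^2 \setminus \gamma$ has exactly two components and to identify the boundaries.

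For the count I would invoke Alexander duality on $S^2$: since $\gamma$ is homeomorphic to the circle $S^1$,
\[
\widetilde{H}_0(S^2 \setminus \gamma) \;\cong\; \widetilde{H}^{\,1}(\gamma) \;\cong\; \widetilde{H}^{\,1}(S^1) \;\cong\; \mathbb Z ,
\]
so $S^2 \setminus \gamma$ has exactly two connected components; both are open since $\gamma$ is closed, and by the reduction above exactly one is bounded and one unbounded in $\C$. (Alternatively, one obtains the same conclusion by a Mayer--Vietoris computation for the open cover of $S^2$ by slightly enlarged neighborhoods of two complementary closed arcs of $\gamma$, using that an arc is acyclic.)

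It remains to show that the unbounded component $U$ and the bounded component $B$ each have $\gamma$ as topological boundary. As $U$ and $B$ are open and closed in $\C \setminus \gamma$, we get immediately $\partial U \subseteq \gamma$ and $\partial B \subseteq \gamma$. For the reverse inclusions I would use the auxiliary fact, again a consequence of Alexander duality ($\widetilde{H}_0(S^2 \setminus A) \cong \widetilde{H}^{\,1}(A) = 0$ for an arc $A$), that no arc separates $S^2$. Given $x \in \gamma$ and a neighborhood $W$ of $x$, write $\gamma = A \cup A'$ as a union of two closed subarcs meeting exactly at their two endpoints, with $x$ in the relative interior of $A$ and $A \subseteq W$. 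Since $A'$ is an arc, $S^2 \setminus A'$ is open and connected, hence path-connected, so there is a path in $S^2 \setminus A'$ joining a point of $B$ to a point of $U$; since $B$ and $U$ lie in different components of $\C \setminus \gamma$ and the path avoids $A'$, the path must first meet $\gamma$ at some point $y \in A \subseteq W$, with the portion just before $y$ contained in $B$, so $y \in \partial B \cap W$. As $W$ was arbitrary and $\partial B$ is closed, $x \in \partial B$; swapping the roles of $B$ and $U$ gives $x \in \partial U$. Therefore $\partial B = \partial U = \gamma$, completing the argument.

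I expect this last paragraph — the boundary characterization — to be the main obstacle: it hinges on the non-separation property of arcs together with a careful "first crossing" connectedness argument, whereas the existence and uniqueness of the bounded component are essentially immediate once Alexander duality (or the Mayer--Vietoris substitute) is in hand.
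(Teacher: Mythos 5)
The paper does not actually prove this proposition; the ``proof'' is a one-line citation to \cite[Theorem~1.3]{conway2012functions}. Your sketch, by contrast, lays out a genuine proof along the standard algebraic-topology route: pass to $S^2$, apply Alexander duality to count components, then use the non-separation property of arcs plus a first-crossing argument to identify the boundaries. The argument is correct as a sketch. The reduction from $S^2\setminus\gamma$ to $\C\setminus\gamma$ is handled properly (removing a point from a connected open subset of a surface keeps it connected, so the component of $\infty$ becomes the unbounded component of $\C\setminus\gamma$, and the other component avoids a neighborhood of $\infty$ and is therefore bounded). The duality computation $\widetilde{H}_0(S^2\setminus\gamma)\cong\widetilde{H}^1(\gamma)\cong\mathbb Z$ gives exactly two components, and your boundary argument is the right one: decompose $\gamma=A\cup A'$ with $A$ small around $x$, connect $B$ to $U$ inside the connected open set $S^2\setminus A'$, and observe the path's first exit from $B$ must occur on $A\cap\partial B$. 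One point worth flagging if you wrote this out in full: Alexander duality for a general compact subset of $S^n$ requires \v{C}ech cohomology rather than singular, but since $\gamma$ and $A'$ are homeomorphic to $S^1$ and $[0,1]$ respectively (hence compact ANRs), the two theories agree, so there is no gap. The trade-off is the expected one: the paper treats the Jordan curve theorem as background and cites it, while your route is the self-contained textbook proof, and you correctly single out the boundary identification (not the component count) as the genuinely delicate step.
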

\begin{proof}
    See~\cite[Theorem~1.3]{conway2012functions}.
\end{proof}

\begin{lem}\label{lem:9}
Let $\lambda \colon [0, \infty) \to \mathbb{C}$ be a continuous function and let $(f_t)_{t \geq 0}$ be the centered Loewner chain driven by $\lambda$. Assume that there is a simple two-sided curve $\eta : \mathbb{R} \to \mathbb{C}$ such that the left hull satisfies $L_t = \eta([-t,t])$ for each $t \geq 0$ and for each $t \geq 0$, the right hull is equal to the image of some simple curve. For each $t > 0$ and each $z \in \gamma([-t,t])$, let $z^-$ (resp. $z^+$) be the prime end of $\mathbb{C} \setminus \gamma([-t,t])$ corresponding to the left (resp. right) side of $\eta([-t,t])$. Then for each $t \geq 0$ and each $s \in [0,t]$,
\[
f_t(\eta(s)^+) = f_t(\eta(-s)^+) \quad \text{ and } f_t(\eta(s)^-) = f_t(\eta(-s)^-).
\]
\end{lem}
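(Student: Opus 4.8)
The plan is to exploit the duality principle (Lemma~\ref{lem:6}) together with the geometry of a two-sided curve. The statement is really a statement about how the mapping-out function identifies the two "ends" of the two-sided curve. Since $L_t = \eta([-t,t])$ is the image of a simple arc with the origin $\eta(0) = \lambda(0)$ as an interior point of the arc (for $t > 0$), and since by Lemma~\ref{lem:regularity_mappingout} the increment $L_{t} \setminus \bigcup_{s<t} L_s$ is mapped by $g_t$ to the single point $\lambda(t)$, the key geometric fact is that the two tips $\eta(t)$ and $\eta(-t)$ of the curve are simultaneously mapped to $\lambda(t)$ under $g_t$. I would first make this precise: for a two-sided curve the left hull grows from both ends, so both $\eta(t)$ and $\eta(-t)$ lie in $L_t \setminus \bigcup_{s<t} L_s$, and hence $g_t$ (extended continuously to the tips via Lemma~\ref{lem:regularity_mappingout}) sends both to $\lambda(t)$, i.e. $f_t$ sends both tips to $0$.

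Next I would upgrade this from the tips to all the intermediate points $\eta(s)$, $\eta(-s)$ with $s \in [0,t]$, using the concatenation principle. Fix $s \in [0,t]$. By~(\ref{concat:0}), $f_t = g_{s,t}\bigl(\,\cdot\, + \lambda(s)\bigr)\circ f_s$ restricted appropriately; more directly, $g_t = g_{s,t}\circ g_s$ on $\mathbb{C}\setminus L_t$. The curve $\eta(s)$ (resp. $\eta(-s)$) is a tip of $L_s$, so by the tip argument above $g_s$ sends both the prime ends $\eta(s)^{\pm}$ and $\eta(-s)^{\pm}$ into $\lambda(s)$ — but here one must be careful: $g_s$ continuously extended to the two sides of the slit at the tip collapses the tip to one point, so already $g_s(\eta(s)^+) = g_s(\eta(s)^-) = g_s(\eta(-s)^+) = g_s(\eta(-s)^-) = \lambda(s)$, hence applying $g_{s,t}$ gives the claim for that $s$. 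Wait — this is too strong and would say $f_t(\eta(s)^+) = f_t(\eta(s)^-)$, which is false for a simple curve. So the correct statement must distinguish sides: $g_s$ maps $\eta(s)^+$ and $\eta(-s)^+$ (the two "right" prime ends, which become the two boundary points flanking the slit tip in a consistent orientation) to the \emph{same} point, and similarly on the left. The right way to see this is to run the Loewner flow backward from time $t$ to time $s$: the point $\lambda(s)$ is where the new hull increment is attached, and the two prime ends of $\mathbb{C}\setminus L_t$ that get identified under the continuous extension of $g_s$ at the attachment point $\lambda(s)$ are exactly $\eta(s)^{\pm}$ paired with $\eta(-s)^{\pm}$ with matching chirality. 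I would make this precise by tracking, under the backward flow $h_u$ associated to $g_t$ via Lemma~\ref{lem:time_reversal}, the preimage of $\lambda(s)$: the hull increment $L_t\setminus L_s$ emanates from the complement side, and the two-sidedness forces the two tips $\eta(s), \eta(-s)$ to be glued.

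A cleaner route, which I would probably prefer to write, is via the duality principle. By~(\ref{duality:1}), $L_t = iR_t(-i\lambda(t-\cdot))$, and the right hull is the residual set $\mathbb{C}\setminus g_t(\mathbb{C}\setminus L_t)$ with $\lambda(t)$ on its boundary; the two-sided curve structure of $L_t$ translates, under this duality together with the hypothesis that $R_t$ is also a simple curve, into the statement that $f_t$ opens up the slit $\eta([-t,t])$ into (the two sides of) a simple curve $R_t$, and a point on the left hull at parameter $s$ on one end is glued to the point at parameter $-s$ on the other end precisely because both ends of the two-sided curve $\eta$ emanate from the single point $\lambda(0)=\eta(0)=0$, which is the "root." Formally: $f_t$ restricted to the prime ends of $\mathbb{C}\setminus\eta([-t,t])$ is a homeomorphism onto the boundary of $\mathbb{C}\setminus R_t$ (Carathéodory), and walking along $\partial(\mathbb{C}\setminus R_t)$ from the tip $f_t(\eta(t)^+)=0$ back along one side of $R_t$ to $\lambda(t)$ and out the other side to $f_t(\eta(-t)^+)=0$ shows the $+$-sides of the two ends traverse $R_t$ in opposite directions, forcing $f_t(\eta(s)^+)=f_t(\eta(-s)^+)$ for each $s$; the argument for the $-$-sides is identical on the other boundary arc.

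The main obstacle I anticipate is bookkeeping the prime-end/chirality correspondence carefully enough that the identification $\eta(s)^+ \leftrightarrow \eta(-s)^+$ (rather than $\eta(s)^+ \leftrightarrow \eta(-s)^-$) comes out with the right signs, and justifying the continuous boundary extension of $f_t$ to the prime ends — this needs $R_t$ to be a simple curve (so its complement is a Jordan domain, up to adding the point at infinity, and Carathéodory's theorem applies) which is exactly the standing hypothesis, but one must handle the slit/two-sided nature of the domain $\mathbb{C}\setminus\eta([-t,t])$ with some care. The continuity in $s$ and the edge cases $s=0$ (giving $f_t(\eta(0)^{\pm})$ trivially equal to themselves) and $s=t$ (the tip case, the base of the induction/limit) are routine once the chirality is pinned down.
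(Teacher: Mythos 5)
The paper does not give its own proof of this lemma: it is cited verbatim to \cite[Lemma~2.6]{gwynne2023loewner}, so there is no argument in the text to compare against. I will therefore assess your sketch on its own merits.

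Your overall instincts are sound --- the facts that both tips $\eta(\pm t)$ are mapped by $f_t$ to $0$ (via Lemma~\ref{lem:regularity_mappingout}), that the concatenation $g_t=g_{s,t}\circ g_s$ should propagate the tip identification at time $s$ to time $t$, and that Carath\'eodory gives a boundary homeomorphism of prime-end circles --- are all the right ingredients. But the heart of the lemma, namely \emph{why the pairing is $\eta(s)^+\leftrightarrow\eta(-s)^+$ and not, say, $\eta(s)^+\leftrightarrow\eta(-s)^-$}, is never actually established. In the concatenation route you correctly notice that your first pass proves too much, but the repair you offer (``the two prime ends of $\C\setminus L_t$ that get identified under the continuous extension of $g_s$ at $\lambda(s)$ are exactly $\eta(s)^{\pm}$ paired with $\eta(-s)^{\pm}$ with matching chirality'') simply restates the claim; nothing in the sketch shows that $g_s$ sends $\eta(s)^+$ and $\eta(-s)^+$ to prime ends lying on the \emph{same} side of the future hull $L_{s,t}$ (rather than opposite sides), which is precisely what makes $g_{s,t}$ glue them to the same point. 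This requires tracking how the single tip prime end $\eta(s)$ of $\C\setminus L_s$ is refined into $\eta(s)^{\pm}$ of $\C\setminus L_t$ and how $g_s$ sends those refinements into the various sectors of $\C\setminus(R_s\cup L_{s,t})$ at $\lambda(s)$; you flag this bookkeeping as ``the main obstacle'' but do not carry it out.

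The ``cleaner route'' has an additional, more structural problem beyond the chirality gap. The centered map sends $\lambda(t)$ to $0$, and $\lambda(t)$ is (for a two-sided right hull) an \emph{interior} point of the arc $R_t$, not an endpoint --- in the model case $\lambda\equiv 0$ one has $R_t=[-2\sqrt{t},2\sqrt{t}]$ with $\lambda(t)=0$ at its midpoint. Your walking argument, however, treats ``the tip $f_t(\eta(t)^+)=0$'' and ``$\lambda(t)$'' as two distinct landmarks on $R_t$, and walks from one to the other; after centering these are the same point, and neither is a tip. Once the geometry is corrected, the Carath\'eodory homeomorphism only tells you that the $+$-boundary arc of $\C\setminus L_t$ (from $\eta(t)$ to $\eta(-t)$) maps onto one of the two prime-end arcs of $\C\setminus R_t$ between the two prime ends of $0$; it gives a homeomorphism of arcs but says nothing about which parameter $s$ on one side matches which parameter on the other. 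The topology alone cannot produce the $s\leftrightarrow-s$ matching --- you must use the Loewner dynamics through the intermediate maps $g_s$, which brings you back to the concatenation route and the unresolved chirality bookkeeping. So the sketch has a genuine gap, not merely deferred detail.
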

\begin{proof}
    See~\cite[Lemma~2.6]{gwynne2023loewner}.
\end{proof}

\begin{lem}\label{lem:Union_connected}
    Let $(M,d)$ be a metric space. Then if $\mathcal{F}$ is a collection of connected subsets of $M$ that have non-empty intersection then $\mathcal{F}$ is also connected. 
\end{lem}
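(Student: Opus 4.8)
The statement is the classical fact that a union of connected sets with a common point is connected; here "$\mathcal{F}$ is also connected'' should be read as "$\bigcup_{A \in \mathcal{F}} A$ is connected''. The plan is to argue by contradiction using the characterization of connectedness via separations. Fix a point $p \in \bigcap_{A \in \mathcal{F}} A$, which exists by hypothesis, and write $Y := \bigcup_{A \in \mathcal{F}} A$, equipped with the subspace metric.

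First I would suppose, for contradiction, that $Y$ is disconnected, so that $Y = U \sqcup V$ where $U$ and $V$ are nonempty, disjoint, and relatively open in $Y$. Without loss of generality assume $p \in U$. The key step is then to show that every $A \in \mathcal{F}$ is entirely contained in $U$. Indeed, $A \cap U$ and $A \cap V$ are relatively open in $A$, disjoint, and cover $A$; since $A$ is connected, one of them must be empty. Because $p \in A$ (as $p$ lies in the common intersection) and $p \in U$, we have $p \in A \cap U$, so $A \cap U \neq \emptyset$, forcing $A \cap V = \emptyset$, i.e. $A \subseteq U$.

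Finally, taking the union over all $A \in \mathcal{F}$ gives $Y = \bigcup_{A \in \mathcal{F}} A \subseteq U$, hence $V = Y \setminus U = \emptyset$, contradicting that $V$ is nonempty. Therefore $Y$ admits no separation and is connected. There is essentially no obstacle here: the only point requiring a little care is the bookkeeping that relative openness in $Y$ restricts to relative openness in each $A$, and that $p$ indeed belongs to every member of $\mathcal{F}$ so that the common-point hypothesis is used exactly once, to pin down which side of the separation contains all of $\mathcal{F}$.
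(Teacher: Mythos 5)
Your proof is correct and is precisely the standard separation argument found in Munkres's Theorem~23.3, which is what the paper cites in place of a written proof. You also rightly note that the statement as phrased should be read as asserting that $\bigcup_{A \in \mathcal{F}} A$ is connected.
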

\begin{proof}
    See~[Theorem~23.3]\cite{munkres2013topology}.
\end{proof}

\begin{lem}\label{lem:calculation_asympt}
    For any $c \in \Omega_{+} \cup \Omega_{-}$, define $\lambda_c \, \colon \, [0,\infty) \to \mathbb C$ by setting $\lambda_c(t)=ct$. Denote by $\gamma_c$ the two-sided pioneer curve created by $\lambda_c$ and consider the following expressions
    \[
    \mathcal{L}_{\pm}(t):=\frac{\log(|2-c\gamma_c(\pm t)|}{t}, \qquad \mathcal{R}_{\pm}(t):= \frac{\re(\gamma(\pm t))}{t}, \qquad \text{ and } \qquad \mathcal{I}_{\pm}(t):= \frac{\im(\gamma(\pm t))}{t}.
    \]
    Then the following statements are true.
    \begin{enumerate}
        \item\label{item:calculation_asympt_1} For $c \in \Omega_{+}$, we have $\mathcal{L}_{\pm}(t) \to 0$ while $\mathcal{R}_{\pm}(t)$ and $\mathcal{I}_{\pm}(t)$ converge to non-zero constants, as $t \uparrow \infty$.
        \item\label{item:calculation_asympt_2} For $c \in \Omega_{-}$, we have $\im(\gamma_c(-t)) \leq 0$ for each $t \geq 0$.
    \end{enumerate}
\end{lem}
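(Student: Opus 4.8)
The plan is to exploit the explicit linearity of the driver $\lambda_c(t)=ct$ together with the scaling property~\eqref{scaling:0}. Since $\lambda_c$ satisfies $a\lambda_c(\cdot/a^2) = a c (\cdot/a^2) = (c/a)(\cdot)$ only after reparametrization, the true invariance here is that the centered chain $f_t(z) = g_t(z) - ct$ driven by $\lambda_c$ has a one-parameter scaling symmetry: $f_t(z)$ and $a f_{t/a^2}(z/a)$ solve the same equation. This forces the pioneer curve to be self-similar, $\gamma_c(t) = \sqrt{t}\,\gamma_c(1)$ up to the correct scaling exponent — more precisely, writing $w(t) = f_t(\gamma_c(t))$ or tracking the tip, one gets that $\gamma_c(\pm t)/\sqrt{t}$ (or the appropriate power) is constant in $t$. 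Actually, because the driver grows linearly rather than like $\sqrt{t}$, the correct statement is that $\gamma_c(t)/t$ converges: one should change variables to $u = \log t$ and show the rescaled flow converges to a fixed point. I would first make this self-similarity precise, identifying the limiting "profile" as the solution of an autonomous ODE obtained from the Loewner equation after the substitution $z = t\,\zeta$, $\tau = \log t$.

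**Proof of item~\eqref{item:calculation_asympt_1}.**
For $c \in \Omega_+$, the curve is (by the classification theorem, Theorem~\ref{thm:classification_linear_complete}) simple, possibly with one spiraling end, and in particular grows to infinity linearly in $t$ along both ends. Concretely, I would track the tip $\gamma_c(\pm t)$ via the backward/centered flow: the quantity $2 - c\gamma_c(\pm t)$ is (up to constants) the denominator $f_t(\text{tip}) $ evaluated appropriately, since the tip is the point mapped to $\lambda_c(t) = ct$ under $g_t$, i.e.\ $g_t(\gamma_c(\pm t)) = ct$, hence $f_t(\gamma_c(\pm t)) = 0$. The relevant non-degenerate quantity controlling the derivative of the flow near the tip is $g_t'$, and the expression $2 - c\gamma_c(\pm t)$ arises from $\partial_t$ of the tip trajectory. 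Using the self-similar profile, $\gamma_c(\pm t) = t\,\zeta_\pm + o(t)$ for constants $\zeta_\pm \neq 0$ (non-vanishing because in the $\Omega_+$ phase the curve reaches infinity and does not stall), so $\mathcal{R}_\pm(t) \to \re(\zeta_\pm)$ and $\mathcal{I}_\pm(t) \to \im(\zeta_\pm)$, both nonzero by the geometry of the simple phase. For $\mathcal{L}_\pm(t) = \log|2 - c\gamma_c(\pm t)|/t$: since $|2 - c\gamma_c(\pm t)| = |2 - c\zeta_\pm t + o(t)|$ grows at most linearly in $t$, its logarithm grows like $\log t = o(t)$, so $\mathcal{L}_\pm(t) \to 0$. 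The careful point is to rule out $\zeta_\pm = 0$, i.e.\ that the curve does not grow sublinearly; this follows from the strip bounds (the real/imaginary part bounds on $L_t$ for the linear driver give $\re(\gamma_c(\pm t))$ sandwiched between $0$ and $\re(c)t$, and matching lower bounds come from the self-similarity forcing the profile to be a genuine nonzero fixed point in the simple phase).

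**Proof of item~\eqref{item:calculation_asympt_2} and the main obstacle.**
For $c \in \Omega_-$, I would use the reflection property~\eqref{reflection:Real} or~\eqref{Reflection:Imaginary} together with the explicit structure of the $\Omega_-$ phase (one end forms a Jordan curve rooted at the origin, the other stays simple). By the definition of $\Omega_-$ via the sign of the explicit expression from Theorem~\ref{thm:classification_linear_complete}, the parameter $c$ is positioned so that the backward end $\gamma_c(-t)$ is precisely the one that forms the bounded Jordan loop at the origin. The claim $\im(\gamma_c(-t)) \le 0$ should follow by showing this end stays in the closed lower half-plane: one invokes the imaginary-strip bound for the relevant sub-chain (after applying the concatenation and duality principles), or more directly shows that $\im f_t(\gamma_c(-t))$ and the sign of $\im c$ conspire so that the trajectory $t \mapsto \gamma_c(-t)$ never crosses the real axis — using that it starts at the origin and that any crossing would contradict the Jordan-curve structure (the loop bounds a region, and by the strip bound $R_t \subseteq \{\min \im \lambda_c \le \im \le \max \im \lambda_c\}$ together with duality, the left-hull end is confined). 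The main obstacle is making the self-similar-profile argument rigorous: one must establish that the rescaled curve $\gamma_c(\pm t)/t$ actually converges (not merely is bounded), which requires understanding the autonomous ODE after the $\tau = \log t$ change of variables well enough to identify its fixed points and their stability in each phase — equivalently, extracting asymptotics of $f_t$ at the tip, which is exactly the kind of delicate estimate that the phase classification in Section~\ref{Section:4} presumably already sets up, so I would lean heavily on those intermediate lemmas rather than redo the analysis from scratch.
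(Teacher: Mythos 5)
The paper's proof rests entirely on an explicit functional relation (the ``pioneer equation'', displayed as (RE) and (IM) in the proof) that pins down $\gamma_c(\pm t)$ in closed form in terms of $c$, $t$, and $\log|2-c\gamma_c(\pm t)|$; dividing (RE) by $t$ and ruling out cases immediately gives item~(1), and evaluating (RE) at a hypothetical real crossing point via the auxiliary function $F(x)=\re(c)x+\log(|2-cx|^2/4)$ gives item~(2). Your proposal never invokes this relation and instead tries to manufacture asymptotics from a self-similarity argument, and this is where the gap lies.

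First, the scaling property~(\ref{scaling:0}) does \emph{not} yield self-similarity for the linear driver: $a\lambda_c(\cdot/a^2) = (c/a)\cdot$, so scaling relates $L_t(\lambda_c)$ to $L_{t/a^2}(\lambda_{c/a})$, i.e.\ it changes the parameter $c$. Exact self-similarity holds only for $\lambda(t)=c\sqrt{t}$. You notice this and retreat to ``change variables to $\tau=\log t$ and show the rescaled flow converges to a fixed point'', but you never establish that this rescaled flow converges, which is precisely what item~(1) asserts. You correctly flag this as the main obstacle, but an acknowledged, unresolved obstacle is a gap, not a proof. Without the pioneer equation (or an equivalent explicit first integral of the tip ODE), there is no mechanism in your argument that forces $\gamma_c(\pm t)/t$ to have a limit rather than, say, oscillate.

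Second, your argument for the linear growth bound is not quite justified: the strip bound controls only $\re(L_t)$ (and $\im(R_t)$), so you have $\re(\gamma_c(\pm t)) = O(t)$ but no a priori linear bound on $\im(\gamma_c(\pm t))$; thus $|2-c\gamma_c(\pm t)|$ is not yet known to grow polynomially, and $\mathcal{L}_\pm(t)\to 0$ does not follow from ``$\log t = o(t)$''. Third, for item~(2) you lean on the classification theorem (simple versus Jordan-loop end) to decide which end of the curve behaves how; but this lemma is an input to Lemma~\ref{lem:3} and hence to the classification, so that reasoning is circular. The paper avoids this entirely by an elementary sign analysis of $F$ on the real axis, showing no crossing time $t_*$ can exist. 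You would need a comparably self-contained argument for the sign of $\im(\gamma_c(-t))$ that does not presuppose the phase geometry.
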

\begin{proof}
    Fix $c \in \Omega_{+}$ and recall that we already know by Lemma~\ref{lem:construction_hull_simple} that $\lambda_c$ creates a two-sided pioneer curve in the sense of Definition~\ref{Def:Two-sided-pioneercurve}. Recall, that the positive (resp. negative) time part of the curve satisfy the real and imaginary part of the pioneer equation
    \begin{align}
    \re(c) \re(\gamma(\pm t)) - \im(\gamma(\pm t)) \im(c) + 2 \log\left(|2-c\gamma(\pm t)|\right) &= 2\log(2) + \re(c^2) t \tag{RE},\\
    \re(c) \im(\gamma(\pm t)) + \im(c) \re(\gamma(\pm t)) + 2 \text{Arg}\left(2-c \gamma(\pm t)\right)  &= \im(c^2) t\tag{IM}.
    \end{align} 
    Consider the real part of the pioneer equation and divide everything by $t$, then we are interested in taking the limits of the following expression
    \[
    \re(c) \mathcal{R}_{\pm}(t) - \im(c) \mathcal{I}_{\pm}(t) + 2 \mathcal{L}_{\pm}(t) = 2 \log(2) + \re(c^2).
    \]
    Now, as $t \uparrow \infty$, we see that the limit on the right hand side exists and is strictly positive, so the limit on the left hand side exists and must also be strictly positive. We first prove that $\mathcal{L}_{\pm}(t) \to 0$ as $t \uparrow \infty$. Suppose that
    there exists $C_1 \neq 0$, such that $\mathcal{L}_{\pm}(t) \to C_1$ as $t \uparrow \infty$. Then by the properties of the logarithm, it must be that either the first term $\mathcal{R}_{\pm}(t)$ or the second term $\mathcal{I}_{\pm}(t)$ diverges as $t \uparrow \infty$, contradicting the fact that the right hand side is finite. Consequently, $\mathcal{L}_{\pm}(t) \to 0$ as $t \uparrow \infty$, and it is left to prove that $\mathcal{R}_{\pm}(t) \to C_1$ and $\mathcal{I}_{\pm}(t) \to C_2$ as $t \uparrow \infty$, where $C_1,C_2 \neq 0$. Since the right hand side is strictly positive, both $C_1,C_2$ cannot be zero simultaneously. Suppose that $\mathcal{R}_{\pm}(t) \to 0$ as $t \uparrow \infty$, then $\mathcal{I}_{\pm}(t) \to -\re(c^2)/\im(c)$ as $t \uparrow \infty$, using the imaginary part of the pioneer equation~(\ref{eq:IM}), we also get $\mathcal{I}_{\pm}(t) \to \im(c^2)/\re(c)$ as $t \uparrow \infty$. Since $c \in \Omega_{+}$, using the uniqueness of limit shows that this is impossible. A similar strategy works if one supposes that $\mathcal{I}_{\pm}(t) \to 0$ as $t \uparrow \infty$ and this finishes the proof.

    For our second claim~(\ref{item:calculation_asympt_2}), write $c=a+ib$, where $a,b > 0$ with $b^2 >a^2$ by $c \in \Omega_{-}$. Recall, that by the remarks of Definition~\ref{Def:Left/Right_hulls}, we know that $\re(\gamma_c(-t)) \geq 0$.
    Observe that the right hand side diverges to $-\infty$ as $t \uparrow \infty$. Since $\im(c) > 0$, we need to analyze the limits of $\mathcal{L}_{-}(t)$ and $\mathcal{I}_{-}(t)$ as $t \uparrow \infty$. If we can rule out that the second term diverges, then we are done.

    Using similar techniques as given in Lemma~\ref{lem:upper_part_simple}, it is not hard to see that $\im(\gamma_c(-t))<0$ for small times. If we can rule out that the curve crosses the real line, then we are done. Denote by $t_{\ast}:= \inf \{0 \leq t \leq T \, \colon \, \im(\gamma_c(-t))=0\}$ the first the curve crosses the real line after entering the fourth quadrant and let us define $F \, \colon \, [0,\infty) \to \mathbb R$ by setting $F(x):=\re(c)x+\log(|2-cx|^2/4)$, which equals the left hand side of the pioneer equation if the curve at a given time instant is purely real. Since the right hand side is strictly negative, we are only interested in the values of $F$ that are potentially negative. As a result, we can rule out points on the real line, where the $x \mapsto \log(|2-cx|^2/4)$ is positive, and a quick calculation shows that this is the case for $4a/\left(a^2+b^2\right)<x$. So if $t_{\star}$ is finite, then $0<\gamma(t_{\ast}) \leq 4a/\left(a^2+b^2\right)$. But now observe that $F(x)=0$ and $F^{\prime}(x) >0$ for $0 \leq x \leq 4a/\left(a^2+b^2\right)$, so that at time $t_{\ast}$ the left hand side is positive but the right hand side is strictly negative.
\end{proof}

\printbibliography

\end{document}